\newcommand{\dx}{{\, \rm d}x}
\newcommand{\Div}{{\rm div}\,}
\newtheorem{thm}{Theorem}
\newtheorem{prop}[thm]{Proposition}
\newtheorem{df}{Definition}
\newcommand{\Ov}[1]{\overline{#1}}
\newcommand{\pder}[2]{\frac{\partial #1}{\partial #2}}
\newcommand{\vr}{\varrho}
\newcommand{\vrd}{\vr_\delta}
\newcommand{\vud}{\vu_\delta}
\newcommand{\vu}{\vc{u}}
\newcommand{\vc}[1]{{\bf #1}}
\newcommand{\vcg}[1]{{\pmb #1}}
\newcommand{\F}[1]{$\mathbb{#1}$}
\newcommand{\Grad}{\nabla}
\newcommand{\tn}[1]{\mbox {\F #1}}
\newcommand{\N}{\tn{N}}
\newcommand{\dt}{\, {\rm d} t }
\newcommand{\intO}[1]{\int_{\Omega} #1\dx}
\newcommand{\ep}{\varepsilon}
\begin{document}

\title{Weak solutions for some compressible multicomponent fluid models}
\author{Anton\'\i n  Novotn\'y$^1$ and  Milan Pokorn\' y$^2$}
\maketitle

\bigskip

\centerline{$^1$ Institut de Math\'ematiques de Toulon, EA 2134}
\centerline{BP20132, 83957 La Garde, France}
\centerline{e-mail: {\tt novotny@univ-tln.fr}}

\centerline{$^{2}$ Charles University, Faculty of Mathematics and Physics}
\centerline{Mathematical Inst. of Charles University, Sokolovsk\' a 83, 186 75 Prague 8, Czech Republic}
\centerline{e-mail: {\tt pokorny@karlin.mff.cuni.cz}}
\vskip0.25cm
\begin{abstract}
The principle purpose of this work is to investigate a "viscous" version of a "simple"
but still realistic bi-fluid model described in \cite[Bresch, Desjardin, Ghidaglia, Grenier, Hillairet]{BreschMF} whose "non-viscous" 
version is derived from physical considerations in \cite[Ishii, Hibiki]{ISHI} as a particular sample of a multifluid model with 
algebraic closure. The goal is to show existence of weak solutions for large initial data on an arbitrarily large time interval.
We achieve this goal by transforming the model to an academic system which resembles to the compressible Navier--Stokes equations,
with however two continuity equations and a momentum equation endowed with pressure of complicated structure dependent on two variable densities.
The new "academic system" is then solved by an adaptation of the Lions--Feireisl approach for solving compressible Navier--Stokes equation, 
completed with several observations related to the DiPerna--Lions transport theory inspired by \cite[Maltese, Mich\'alek, Mucha, Novotn\'y, Pokorn\'y, Zatorska]{3MNPZ}
and \cite[Vasseur, Wen, Yu]{VWY}. We also explain how these techniques can be generalized to a model of mixtures with more then two species.

This is the first result on the existence of weak solutions for any realistic multifluid system.

\end{abstract}

\noindent{\bf MSC Classification:} 76N10, 35Q30

\smallskip

\noindent{\bf Keywords:} bi-fluid system; multifluid system; compressible Navier--Stokes equations; transport equation; continuity equation; large data weak solution.

\section{Introduction} \label{se1}

The rigorous mathematical results on existence of weak solutions in large for realistic multi-fluid models in more than one space dimension are very rare (if not non existing) in the mathematical literature.
One of the most simple bi-fluid models, still realistic in some physical situations, is a model of two compressible fluids with common  velocity and dissipation  described in Bresch et al. \cite{BreschMF}. Its  "non-viscous" counterpart can be formally obtained from more complex two velocity models by a process of interface  averaging and convenient algebraic closure. We refer the reader to the seminal works of Ishii and Hibiki \cite{ISHI} and of D. Drew and S.L. Passman \cite{DRPAS} for more details and exposition on the different models and the averaging process in multi-fluid modeling, and to works of Evje \cite{EV1}, \cite{EV2} for some
existence results for one-dimensional multifluid models.

The compressible bi-fluid model provided in Bresch et al. \cite[Section 2.2.3]{BreschMF} reads:
\begin{equation}\label{eq1.1bi}
\begin{aligned}
\partial_t (\mathfrak{a}\vr_+)  + \Div(\mathfrak{a}\vr_+\vu) &= 0, \\
\partial_t ((1-\mathfrak{a})\vr_-)  + \Div((1-\mathfrak{a})\vr_-)\vu) &= 0,  \\
\partial_t \big((\mathfrak{a}\vr_++(1-\mathfrak{a})\vr_-))\vu\big) &+\\ \Div\Big(\big(\mathfrak{a}\vr_++(1-\mathfrak{a})\vr_-\big) \vu\otimes \vu\Big) + \nabla \mathfrak{P}_+(\vr_+) &= \mu \Delta \vu + (\mu+\lambda)\Grad \Div \vu,\\
\mathfrak{P}_{+}(\vr_+) &= \mathfrak{P}_{-}(\vr_-),
\end{aligned}
\end{equation}
where $\mathfrak{P}_{\pm}$  are given functions characterizing the species in the mixture while $0\le \mathfrak{a}\le 1$, $\vr_+\ge 0$,
$\vr_-\ge 0$ and $\vc u$ are unknown functions. They have the following meaning:
$\mathfrak{a}$, $\mathfrak{a}\vr_+$, $(1-\mathfrak{a})\vr_+$ denote the rate of amount of the first species, density of the first and the second species in the mixture, respectively, while $\vu$ is the velocity of the mixture. The constants $\mu$ and $\lambda$ are the shear and bulk (average) viscosities of the mixture,   We assume $\mu>0$ and $2\mu + 3\lambda \geq 0$ as is standard and physically reasonable in such situation.

The above equations are set in the time-space domain $Q_T=I\times \Omega$, where $\Omega$ is a sufficiently smooth bounded domain of $R^3$ and $I=(0,T)$ is the time interval with $T>0$ arbitrary large. 

The system is endowed  with boundary condition
\begin{equation} \label{eq1.2bi}
\vu = \vc{0}
\end{equation}
on $(0,T)\times \partial \Omega$, and the initial conditions in $\Omega$
\begin{equation} \label{eq1.3bi}
\begin{aligned}
\mathfrak{a}\vr_+(0,x) &= \mathfrak{a}_0\vr_{+,0}(x):=\vr_0(x), \\
(1-\mathfrak{a})\vr_-(0,x) &= (1-\mathfrak{a}_0) \vr_{-,0}(x):=Z_0(x), \\
(\mathfrak{a}\vr_++(1-\mathfrak{a})\vr_-)\vu(0,x)&= 
(\mathfrak{a}_0\vr_{0,+}+(1-\mathfrak{a}_0)\vr_{-,0})\vu_0(x):=\vc M_0.
\end{aligned}
\end{equation}

In this paper, we will refer to this system as to the {\it real bi-fluid system}.

One of the goals of this work is to prove existence of weak solutions for this model
under very mild (and quite realistic) assumptions on constitutive functions
$\mathfrak{P}_{\pm}$. The proof will be based on the reformulation of the original problem via a change of variables to a more convenient problem of PDEs. The new 
reformulated boundary value problem reads: 

\begin{equation}\label{eq1.1}
\begin{aligned}
\partial_t \vr  + \Div(\vr \vu) &= 0, \\
\partial_t Z + \Div (Z \vu) & = 0,  \\
\partial_t \big((\vr+Z)\vu\big) + \Div\big((\vr+Z) \vu\otimes \vu) + \nabla P(\vr,Z) &= \mu \Delta \vu + (\mu+\lambda)\Grad \Div \vu.
\end{aligned}
\end{equation}
This system reminds the compressible Navier--Stokes equations; however, the pressure $P=P(\vr,Z)$ depends on two densities (possibly in a complicated way). Both partial densities $\vr$ and $Z$ satisfy the continuity equation (without any source or diffusive terms) and the total density (the sum of them) appears in the inertial terms in the momentum equation. 

As for system (\ref{eq1.1bi}) we complete the new system by the boundary condition
\begin{equation} \label{eq1.2}
\vu = \vc{0}
\end{equation}
on $(0,T)\times \partial \Omega$ and the initial conditions in $\Omega$
\begin{equation}\label{eq1.3}
\vr(0,x) = \vr_0(x), \;
Z(0,x) = Z_0(x), \;
(\vr + Z)\vu(0,x) = \vc{m}_0(x).
\end{equation}

This system is of independent interest. In this paper, we will call it a {\it academic
multifluid system}. 

In fact, it seems to reflect the essence of mathematical difficulties allowing to mimic --- especially in the case of complex dependence of pressure on densities $\vr$ and $Z$ --- the fundamental properties of not only  the bi-fluid system (\ref{eq1.1bi}) but also of other multi-fluid models, as e.g. one velocity Baer--Nunziato model without relaxation.
System (\ref{eq1.1}--\ref{eq1.3}) has been recently studied for simple "toy" pressure functions  by A. Vasseur at al. \cite{VWY} --- with $P(\vr,Z) = \vr^\gamma + Z^\beta$ for some $\gamma >\frac 95$ and $\beta \geq 1$ and in \cite{3MNPZ} for 
$P(Z)=Z^\gamma$, $\gamma>3/2$ (where, however, $\vr+Z$ is replaced by $\vr$ in the inertial terms). Although the primal purpose of \cite{3MNPZ} is to investigate the compressible Navier--Stokes equations with entropy transport rather
than multifluid flows, both approaches present numerous similar features. One of essential  ingredients of proofs in both papers --- besides the application of the Lions--Feiresl compactness approach --- is a particular interrelation of quantities obeying renormalized continuity and transport equations. These properties will be essential as well in this paper.
 
Our approach allows  to treat more than two species. However, the two-phase model itself under the general conditions on the pressure is already quite complicated, and assuming directly more complex situation would make the paper significantly less understandable.  We therefore prefer to keep at the beginning the more simple situation of solely two fluids, and at the end, in the last section, we just briefly formulate the more general problem for many species and explain the differences in the proof; however, we treat only the academic multifluid problem, as the real multifluid problem becomes quite technical and its treatment 
would extend the length of the paper considerably.

The weak solutions to both systems are defined in the next Section (see Definitions
\ref{d1} and \ref{d1bi}), and the exact existence statements are announced in Theorem \ref{t1} (dealing with the academic system (\ref{eq1.1}--\ref{eq1.3})) and in Theorem \ref{t1bi}
(dealing with the real bi-fluid system (\ref{eq1.1bi}--\ref{eq1.3bi})). 

Theorem \ref{t1} generalizes
the result of Vasseur et al. \cite{VWY} in several directions: 1) We allow complicated (even non-monotone) pressure functions (cf. assumption (H3--H5) in the next section);
this permits us to treat problems beyond purely academic examples. 2) In contrast
with \cite{VWY} we can consider also the borderline case, which is in this situation so far $\gamma=9/5$. We allow exponents $\beta>0$ instead of $\beta\ge 1$.
3) We may consider more then two species, cf. Theorem \ref{t2}.  
 
The best expected result within the borderlines of the existing mono-fluid theory would be $\gamma>3/2$. This remains still  an open problem.

Theorem \ref{t1bi}
is to the best of our knowledge  the first result  for a fully non-linear bi-fluid model really used in compressible multi-fluid modeling. It applies to strictly increasing pressure laws $\mathfrak{P}_\pm$ with growth at infinity corresponding to $\gamma^+\ge 9/5$,  $\gamma^->0$ and a certain (mild) restriction  on the growth
of $\mathfrak{P}_\pm$ near $0$ imposed by conditions (\ref{Hbi3+}--\ref{Hbi4}) and on $\gamma^{-}$ large with respect to $\gamma^+$ due to (\ref{gamma}), cf. Remark \ref{se2}.3. In particular, the assumptions are satisfied for isentropic pressure laws
$\mathfrak{P}_{\pm}(\vr_\pm) = \vr_\pm^{\gamma_\pm}$, $\gamma^+\ge 9/5$,   $\gamma^- > \frac{\gamma^+}{\sqrt{\gamma^+ +1}}$ (where the latter constraint is 
due to condition (\ref{Hbi3+}) imposing limitation on the power law near $0$) and $\gamma^{-} + \frac{\gamma^{-}}{\gamma^{+}}-\frac{\gamma^{+}}{\gamma^{-}} <\gamma^{+} + \min\{\frac 23\gamma^{+} -1, \frac{\gamma^{+}}{2}\}$ if $\gamma^->\gamma^+$. Even weaker restrictions can be obtained if $\underline a >0$, see (\ref{eq2.1}) below.  
The best expected limitation for the growth at infinity corresponding to the mono-fluid compressible flows, namely $\gamma^+>3/2$, has not been achieved yet. It is again an excellent open problem.

It is to be noticed that the linearized semi-stationary version of this system (composed of two continuity equations of the momentum equation with neglected material derivative) has been investigated very recently in Bresch et al. \cite{BRZA} with particular pressure laws $\mathfrak{P}_{\pm}(\vr_\pm)= \vr_\pm^{\gamma_\pm}$, $\gamma_\pm>1$ on a 3-D periodic cell. This paper has, besides \cite{VWY} and \cite{3MNPZ}, largely inspired the present paper. Note, however, that the authors did not use the Feireisl--Lions approach to prove the compactness of the density, but they applied the idea due to D. Bresch and P.E. Jabin, see \cite{BJ}.

In what follows, the scalar-valued functions will be printed with the usual font, the vector-valued functions will be printed in bold, and the tensor-valued functions with a special font, i.e. $\vr$ stands for the density, $\vu$ for the velocity field and $\tn{S}$ for the stress tensor.  
We use standard notation for the Lebesgue and Sobolev spaces equipped by the standard norms $\|\cdot\|_{L^p(\Omega)}$ and $\|\cdot\|_{W^{k,p}(\Omega)}$, respectively. We will sometimes distinguish the scalar-, the vector- and the tensor-valued functions in the notation, i.e. we use $L^p(\Omega)$ for scalar quantities, $L^p(\Omega;R^3)$ for vectors and $L^p(\Omega;R^{3\times 3})$ for tensors. The indication of the $R$ or tensor character of the fields (here $;R^3$ or $;R^{3\times3}$) may be omitted, when there is no lack of confusion. The Bochner spaces of integrable functions on $I$ with values in a Banach space $X$ will be denoted $L^p(I;X)$; 
likewise the spaces of continuous functions on $\overline I$ with values in $X$ will be denoted $C(\overline I;X)$. The norms in the Bochner spaces will be denoted $\|\cdot\|_{L^p(I;X)}$ and $\|\cdot\|_{C(\overline I;X)}$, respectively. In most cases, the Banach space $X$ will be either the Lebesgue or the Sobolev space. 
Finally, we use vector spaces $C_{\rm weak}(\overline I; X)$ of continuous  functions in $\overline I$  with respect to weak topology of $X$ (meaning
that $f\in C_{\rm weak}(\overline I; X)$ iff $t\mapsto {\cal F}(f(t))$ belongs for any ${\cal F}\in X^*$ to $C( \overline I)$).

 The generic constants will be denoted by $c$, $\underline c$, $\overline c$, $C$, 
$\underline C$, $\overline C$ and their value may change even in the same formula or in the same line.

\section{Assumptions and main results}\label{se2}

In this section, we shall list and motivate our assumptions, and state
the main results.

As usual for this type of equations, we shall assume throughout the paper that
\begin{equation}\label{Om}
\mbox{$\Omega$ is a bounded domain of class $C^{2,\nu}$, $\nu\in (0,1)$.}
\end{equation}

We will, exactly as in \cite{VWY} and \cite{3MNPZ},   use the properties of the regularized continuity equation which allow us to obtain a certain type of a minimum principle. We therefore assume: \\ \\
{\bf Hypothesis (H1).}
\begin{equation} \label{eq2.1}
(\vr_0,Z_0)\in {\cal O}_{\underline a}:=
\{(\vr,Z)\in R^2\,|\,\vr\in [0,\infty),\underline a\vr 
\le Z\le \overline a \vr\},\;\mbox{$0\le\underline a 
<\overline a<\infty$}.
\end{equation}
In what follows we will always use the following convention for the calculus of fractions $\frac Z\vr$ provided $(\vr,Z)\in {\cal O}_{\underline a}$,
namely
\begin{equation}\label{conv}
s=\frac Z\vr:=\left\{
\begin{array}{c} \frac Z\vr \;\mbox{if $\vr>0$},\\
\;\mbox{if $\vr=0$}.
\end{array}
\right.
\end{equation}


We impose natural conditions on the integrability of the initial data: \\ \\
{\bf Hypothesis (H2).}
\begin{equation} \label{eq2.6}
\vr_0 \in L^\gamma(\Omega), \; Z_0 \in L^\beta(\Omega) \; \text{ if } \beta > \gamma,
\end{equation}
$$
\vc{m}_0 \in L^1(\Omega;R^3),\;
(\vr_0+Z_0)|\vu_0|^2\in L^1(\Omega).
$$

{ Next three sets of hypotheses deal with the constitutive law of pressure.
They are intrinsically related 
to the Helmholtz free energy function $H_P$, which appears naturally in the (formal) energy identity of the system (\ref{eq1.1}--\ref{eq1.3}). 

}

{The Helmholtz free energy function 
$H_P(\vr,Z)$ corresponding to $P$ is a  solution of the partial differential equation of the first order in $(0,\infty)^2$
\begin{equation}\label{HPODE}
P(\vr,Z) = \vr \pder{H_P(\vr,Z)}{\vr} + Z\pder{H_P(\vr,Z)}{Z}-H_P(\vr,Z).
\end{equation}
It is not uniquely determined. However, we can find one of its admissible explicit solutions by using the method of characteristics, namely
\begin{equation}\label{HP}
H=H_P(\vr,Z)=\vr\int_1^\vr \frac {P(s,s\frac Z\vr)}{s^2}\,{\rm d}s
{\;\mbox{if $\vr>0$},\; H_P(0,0)=0.}
\end{equation}

}
\noindent
{\bf Hypothesis (H3).} \\ \\
We suppose that pressure $P$: $[0,\infty)^2\to [0,\infty)$, P(0,0)=0, $P\in C^1((0,\infty)^2)$ is such that
\begin{equation}\label{it1-}
\forall Z\ge 0,\;\mbox{function $\vr\mapsto P(\vr,Z)$ is continuous in $[0,\infty)$},
\end{equation}
$$
\forall \vr\ge 0,\;\mbox{function $Z\mapsto P(\vr,Z)$ is continuous in $[0,\infty)$}
$$ 
and
there is a positive constant $C$ such that for all $(\vr,Z)\in {\cal O}_{\underline a}$
 \begin{equation} \label{eq2.2}
 \underline C(\vr^\gamma + Z^\beta -1)\le P(\vr,Z) \leq \overline C(\vr^\gamma + Z^\beta +1), \\
\end{equation}
with $\gamma \geq \frac{9}{5}$, $\beta > 0$ and $\gamma_{BOG}= \min\{\frac 23 \gamma-1,\frac{\gamma}{2}\}$, the improvement of the integrability due to the Bogovskii operator estimates, cf. Section \ref{se7}.

We moreover assume 
\begin{equation}\label{eq2.5-}
|\partial_ZP(\vr,Z)|\le C(\vr^{-\underline\Gamma}+\vr^{\overline\Gamma-1})\;\mbox{in ${\cal O}_{\underline a}\cap (0,\infty)^2$}
\end{equation}
with some $0\le\underline\Gamma<1$, and with some $0< \overline\Gamma < \gamma + \gamma_{BOG}$ if $\underline a=0$, 
$0<\overline\Gamma<{\rm max}\{\gamma+\gamma_{BOG}, \beta+\beta_{BOG}\} $ if $\underline a>0$. 
\\ \\
{\bf Hypothesis (H4).}\\ \\
Next we assume
%
\begin{equation} \label{eq2.4}
P(\vr,\vr s)=
{\cal P}(\vr,s) - {\cal R} (\vr,s),
\end{equation}
where $[0,\infty)\ni\vr\mapsto {\cal P}(\vr,s)$ is non decreasing  for any $s\in [\underline a,\overline a]$, and $\vr\mapsto {\cal R}(\vr,s)$ is for any $s \in [\underline a,\overline a]$ a non-negative $C^2$-function  in $[0,\infty)$ uniformly bounded with respect to $s \in [\underline a, \overline a]$ with compact support uniform with respect to $s \in [\underline a, \overline a]$ . Here, $\underline a, \overline a$ are the constants from relation (\ref{eq2.1}). Moreover, if $\gamma=\frac 95$, we suppose that
\begin{equation}\label{?!}
{\cal P}(\vr,s)=f(s)\vr^\gamma +\pi(\vr,s),
\end{equation}
where $[0,\infty)\ni\vr\mapsto \pi(\vr,s)$ is non decreasing  for any $s\in [\underline a,\overline a]$ and $f\in L^\infty(\underline a,\overline a)$, ${\rm ess\, inf}_{s\in(\underline a,\overline a)} f(s)\ge \underline f>0$. {Finally, we shall assume
\begin{equation}\label{?!+}
\forall\vr\in (0,1),\; \sup_{s\in [\underline a,\overline a]}P(\vr,\vr s)\le c\vr^\alpha\;\mbox{with some $c>0$ and $\alpha>0$}.
\end{equation}
}

{The last hypothesis is technical and few restrictive.}\\ \\
\vbox{\noindent 
{\bf Hypothesis (H5).}\\ \\
Function $\vr\mapsto P(\vr,Z)$ is for all $Z>0$ locally Lipschitz on $(0,\infty)$  and function $Z\mapsto \partial_Z P(\vr,Z)$ is for all $\vr>0$ locally Lipschitz on $(0,\infty)$ with Lipschitz constant
\begin{equation}\label{eq2.3a-}
\widetilde L_P(\vr,Z)\le C(\underline r)(1+\vr^A)\;\mbox{ for all $\underline r>0$, $(\vr, Z)\in {\cal O}_{\underline a} \cap (\underline r,\infty)^2$} 
\end{equation}
with some non negative number $A$. Number $C(\underline r)$ may diverge to $+\infty$ as $\underline r\to 0^+$.}
\\ \\
\noindent
{\bf Remark \ref{se2}.1}
{\it
\begin{description}
\item{\it 1.} As formulated more precisely in Section \ref{se3}, assumption (\ref{eq2.1}) allows to show that $(\vr(t,x), Z(t,x))$ in ${\cal O}_{\underline a}$ for $(t,x) \in (0,\infty)\times \Omega$ via a certain type of minimum principle.

\item{\it 2.} Assumption (\ref{eq2.2}) in Hypothesis (H3) is used to prove estimates
of the densities (and also of the pressure). We explain its use in Sections \ref{se5}
and \ref{se7}. To this end it is important to notice that it implies the following bounds for 
the corresponding Helmholtz function $H_P$ defined in (\ref{HP}),
\begin{equation} \label{eq2.3}
\underline C (\vr^\gamma+Z^\beta -1)\le H_P(\vr,Z) \leq \overline C(\vr^\gamma + Z^\beta +1)
\end{equation}
 for all $(\vr,Z)\in {\cal O}_{\underline a}$.
 It is this  estimate  which primarily implies the bounds for densities via the energy inequality (regardless bounds (\ref{eq2.2})). Consequently, the upper bound (\ref{eq2.2})
guarantees the integrability of pressure, and the lower bound (\ref{eq2.2})
is employed in conjunction with the Bogovskii operator to improve the estimates for densities, cf. Sections \ref{se3}, \ref{se6}, \ref{se7}. 
\item{\it 3.} Assumption (\ref{eq2.5-}) implies that function $s\mapsto P(\vr,\vr s)$ defined in Hypothesis (H4) is Lipschitz on $[\underline a,\overline a]$ with Lipschitz constant 
\begin{equation} \label{eq2.5}
L_{P}(\vr) \leq C\big(\vr^{1-\underline{\Gamma}} 1_{\{0\leq \vr \leq 1\}} + \vr^{\overline{\Gamma}} 1_{\{\vr \geq 1\}}\big)
\end{equation}
with $\underline{\Gamma} {\in [0,1)}$ and $\overline{\Gamma} < \gamma + \gamma_{BOG}$ or $\beta +\beta_{BOG}$ (if $\underline{a}>0$ and $\beta >\gamma$).
This fact will be used in each step of the convergence proof in conjunction
with the fact that the quantity $\vr s^2$, where $s=Z/\vr$, verifies continuity
equation (which implies a certain form of compactness for $s$ itself, cf. Proposition
 \ref{L2}). Hypothesis (H1) in conjunction
with the maximum principle applied to the approximations of both continuity
equations will ensure  $\underline a\leq s\leq \overline a$ (see Section \ref{se5} for more details), which motivates the requirement on the range of $s$ in (\ref{eq2.5}) and range of $Z$ in estimates (\ref{eq2.2}), (\ref{eq2.3}), (\ref{eq2.3a}), (\ref{eq2.5-}), (\ref{eq2.3a-}). {Assumption (\ref{?!+}) on behaviour of
function $\vr\mapsto P(\vr,\vr s)$ near zero is not very much restrictive. It ensures the continuity of Helmholtz function $H_P$ at point $(0,0)$. It is easy to verify
that $H_P\in C(\overline{{\cal O}_{\underline a}})$ (and $H_P(0,0)=0$).}
\item{\it 4.} Decomposition (\ref{eq2.4}) is crucial for the proof of strong convergence of density via the Lions' method. It is inspired by Feireisl \cite{Fe2002}, where a similar decomposition has been applied in order to prove existence of a weak solutions to the compressible Navier--Stokes equations with possibly non-monotone pressure (for values of the density from a bounded interval).
\item{\it 5.} Assumption (\ref{eq2.3a-}) is technical and little restrictive. It is
related to the way how the equations are approximated and will be used to derive
estimates at the first level of approximations, cf. Section \ref{se5}. To this end,
it is important to notice that it implies estimate
\begin{equation} \label{eq2.3a}
\Big|\frac{\partial ^2 H}{\partial \vr^2}(\vr,Z)\Big| +  \Big|\frac{\partial ^2 H}{\partial \vr \partial Z}(\vr,Z)\Big| + \Big|\frac{\partial ^2 H}{\partial Z^2}(\vr,Z)\Big| \leq C(\underline r)(1+\vr^A)
\end{equation} 
for all $\underline r>0$, $(\vr, Z)\in {\cal O}_{\underline a} \cap (\underline r,\infty)^2$
\end{description}
as one can compute directly from the explicit formula (\ref{HP}).
}
\\ \\
\noindent
{\bf Remark \ref{se2}.2}
{\it
Before presenting the weak formulation of our problem and formulating our main existence results, let us show several examples of the pressure functions which fulfill  Hypotheses (H3--H5).
\begin{description}
\item{\it 1.}
We may take
\begin{equation} \label{e1}
P(\vr,Z) = \vr^\gamma + Z^\beta + \sum_{i=1}^M F_i(\vr,Z),
\end{equation}
where $F_i (\vr,Z) = C_i \vr^{r_i} Z^{s_i}$, $0\leq r_i<\gamma$, $0 \leq s_i <\beta$, $r_i +s_i <\max \{\gamma, \beta\}$. It is an easy matter to check that all Hypotheses (H3--H5) are fulfilled.
\item {\it 2.}
Another possibility, not covered in \cite{VWY}, is 
\begin{equation} \label{e2}
P(\vr,Z) = (\vr+Z)^\gamma + \sum_{i=1}^M F_i(\vr,Z),
\end{equation}
where $F_i$ are as above (for $\beta =\gamma$). Again, it is easy to verify all Hypotheses (H3--H5).
\item{\it 3} A nontrivial example is pressure defined in (\ref{biP}) corresponding to the alternative formulation of system (\ref{eq1.1bi}) in the form (\ref{eq1.1}).
It is proved in Section \ref{sdod1} that it satisfies Hypotheses ({H3}--{H5})
under assumptions (\ref{Hbi2}--\ref{Hbi4}).
\end{description}
}

We now explain the notion of the weak solution to  problem (\ref{eq1.1}--\ref{eq1.3}).

\begin{df}\label{d1}
The triple $(\vr,Z,\vu)$ is a bounded energy weak solution to problem
(\ref{eq1.1}--\ref{eq1.3}), if $\vr, Z\geq 0$ a.e. in $I\times \Omega$, $\vr \in L^\infty(I;L^\gamma(\Omega))$, $Z\in L^\infty(I;L^{\gamma}(\Omega))$, $\vu \in L^2(I;W^{1,2}_0(\Omega;R^3))$, $(\vr+ Z)|\vu|^2 \in L^\infty(I;L^1(\Omega))$, $P(\vr,Z) \in L^1(I\times \Omega)$, and
\begin{equation} \label{eq2.7}
\begin{aligned}
\int_0^T \int_\Omega \big(\vr \partial_t \psi + \vr \vu \cdot \Grad \psi\big) \dx \dt  + \int_\Omega \vr_0 \psi(0,\cdot) \dx &=0 \\
\int_0^T \int_\Omega \big(Z \partial_t \psi + Z \vu \cdot \Grad \psi\big) \dx \dt + \int_\Omega Z_0 \psi(0,\cdot) \dx &=0
\end{aligned}
\end{equation}
for any $\psi \in C^1_c([0,T) \times \Ov{\Omega})$, 
\begin{equation} \label{eq2.8}
\begin{aligned}
&\int_0^T \int_\Omega \big((\vr+Z)\vu \cdot \partial_t \vcg{\varphi} + (\vr+Z) (\vu\otimes \vu): \Grad \vcg{\varphi} + P(\vr,Z) \Div \vcg{\varphi}\big) \dx \dt \\
= &\int_0^T\int_\Omega (\mu \Grad \vu :\Grad \vcg{\varphi} + (\mu+\lambda) \Div \vu \, \Div \vcg{\varphi} \big) \dx \dt - \int_\Omega \vc{m}_0 \cdot \vcg{\varphi}(0,\cdot) \dx
\end{aligned} 
\end{equation}
for any $\vcg{\varphi} \in C^1_c([0,T) \times \Omega;R^3)$, and the energy inequality holds
\begin{equation} \label{eq2.9}
\begin{aligned} 
&\int_\Omega \Big(\frac 12 (\vr+Z)|\vu|^2 + H_P(\vr,Z)\Big)(\tau,\cdot) \dx  \\
+& \int_0^\tau \int_\Omega \big(\mu|\Grad \vu|^2 + (\mu+\lambda)(\Div \vu)^2\big) \dx\dt \\
\leq & \int_\Omega \Big(\frac{|\vc{m}_0|^2}{2(\vr_0 +Z_0)} + H_P(\vr_0,Z_0)\Big) \dx
\end{aligned}
\end{equation}
for a.a. $\tau \in (0,T)$.
\end{df}

The first main result of the paper deals with system (\ref{eq1.1}--\ref{eq1.3}) and  reads
\begin{thm} \label{t1}
Let $\gamma \geq \frac 95$, $0<\beta<\infty$. 
Then under Hypotheses (H1--H5) problem (\ref{eq1.1}--\ref{eq1.3}) admits at least one weak solution in the sense of Definition \ref{d1}. 
Moreover, {for all $t\in \overline I$, $(\vr(t,x), Z(t,x))\in {\cal O}_{\underline a}$ for a. a. $x\in \Omega$},
$\vr \in C_{weak}([0,T); L^\gamma (\Omega)){\cap C(\overline I; L^1(\Omega))}$, $Z \in C_{weak}([0,T); L^{q_{\gamma,\beta}} (\Omega)){\cap C(\overline I; L^1(\Omega))}$, $(\vr+Z)\vu \in C_{weak}([0,T); L^q (\Omega;R^3))$ for some $q>1$ and $P(\vr,Z) \in {L^q(I\times\Omega)}$ for some $q>1$. 
In the above
$$
q_{\gamma,\beta}=\gamma\;\mbox{if $\beta < \gamma$},\;
q_{\gamma,\beta}=\beta\;\mbox{if $\beta\ge\gamma$}.
$$
\end{thm}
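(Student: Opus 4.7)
The plan is to construct weak solutions via a three-level approximation scheme in the spirit of Feireisl--Novotn\'y--Stra\v skraba for the compressible Navier--Stokes equations, adapted to the two-density setting along the lines of \cite{VWY} and \cite{3MNPZ}. At the innermost level I would seek the velocity in a finite-dimensional Galerkin space $X_n\subset W^{1,2}_0(\Omega;R^3)$, regularize both continuity equations by adding $\ep\Delta\vr$ and $\ep\Delta Z$ with homogeneous Neumann data, and augment the pressure by $\dd(\vr^\Gamma+Z^\Gamma)$ with $\Gamma$ large enough to secure high integrability of both densities after the Galerkin limit. Existence at the Galerkin level is obtained from a Schauder fixed point argument on a short time interval, extended to $[0,T]$ by the energy estimate; the Lipschitz-type Hypothesis (H5), which transfers into the bound (\ref{eq2.3a}) on the second derivatives of $H_P$, is precisely what makes the fixed point map well-defined and continuous on the density component.

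A key structural observation used at every level: since both $\vr$ and $Z$ are transported (parabolically at the $\ep$-level, in the DiPerna--Lions renormalized sense afterwards) by the common velocity $\vu$, the ratio $s=Z/\vr$ defined through (\ref{conv}) satisfies $\underline a\le s\le\overline a$ by parabolic comparison, respectively by renormalization applied to $\vr s^2$, so that $(\vr,Z)\in\mathcal O_{\underline a}$ throughout. Combined with the energy identity and the two-sided bound (\ref{eq2.3}) on $H_P$ this delivers the backbone estimates $\vu\in L^2(I;W^{1,2}_0)$, $\sqrt{\vr+Z}\,\vu\in L^\infty(I;L^2)$, $\vr\in L^\infty(I;L^\gamma)$, $Z\in L^\infty(I;L^\beta)$. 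Testing the momentum equation with Bogovskii-type functions of the form $\Grad\Delta^{-1}[1_\Omega\vr^\theta]$ (and analogously for $Z^\theta$ when $\underline a>0$) upgrades the pressure integrability by the exponent $\gamma_{BOG}=\min\{\tfrac23\gamma-1,\tfrac\gamma2\}$; this is exactly where the condition $\gamma\ge 9/5$ enters, securing $P(\vr,Z)\in L^q(I\times\Omega)$ for some $q>1$ and equi-integrability of the convective term $(\vr+Z)\vu\otimes\vu$.

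The main obstacle, encountered in each of the successive passes $n\to\infty$, $\ep\to 0$, $\dd\to 0$, is the strong convergence of $\vr$ and $Z$ needed to identify the weak limit of $P(\vr,Z)$. Following the Lions--Feireisl philosophy, I would derive an effective viscous flux identity by testing the momentum equation with $\Grad\Delta^{-1}[1_\Omega T_k(\vr+Z)]$ and using standard commutator estimates. The two-density pressure is treated via Hypothesis (H4): writing $P(\vr,\vr s)=\mathcal P(\vr,s)-\mathcal R(\vr,s)$ with $\vr\mapsto\mathcal P(\vr,s)$ monotone and $\mathcal R(\cdot,s)$ compactly supported and $C^2$. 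Compactness of $s$ is inherited from the fact that $\vr s^2$ satisfies a renormalized continuity equation, the key observation of \cite{3MNPZ}, which forces $s_n\to s$ a.e.; this in turn gives strong convergence of $\mathcal R(\vr_n,s_n)$, while monotonicity of $\mathcal P(\cdot,s)$ combined with the effective viscous flux identity and the renormalized continuity equation for $\vr$ pinches out the defect and yields $\vr_n\to\vr$ a.e. The borderline $\gamma=9/5$ is handled via Feireisl's oscillation defect measure; the extra prefactor $f(s)$ in (\ref{?!}) causes no trouble since $f\in L^\infty$ with positive essential infimum and $s_n\to s$ a.e.

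Once $\vr_n\to\vr$ and $Z_n\to Z$ strongly in $L^1(I\times\Omega)$, the limit passage in the weak formulations (\ref{eq2.7})--(\ref{eq2.8}) is routine, and weak lower semicontinuity gives the energy inequality (\ref{eq2.9}). Weak continuity in time of $\vr$ in $L^\gamma$, of $Z$ in $L^{q_{\gamma,\beta}}$, and of $(\vr+Z)\vu$ in $L^q$ is inherited from the equations themselves; the stronger $C(\overline I;L^1(\Omega))$ continuity of both densities follows from the renormalized continuity equation together with equi-integrability. The technical part of the programme concentrates in the $\dd\to 0$ limit, where the artificial pressure vanishes and the full weight of the effective viscous flux / oscillation defect machinery, combined with the compactness transfer from $s$ to $\vr$, is required.
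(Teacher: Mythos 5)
Your plan is essentially the paper's proof: the same three-level scheme (Galerkin, parabolic regularization of both continuity equations, artificial pressure), the minimum principle giving $(\vr,Z)\in{\cal O}_{\underline a}$, energy plus Bogovskii estimates with $\gamma_{BOG}$, the effective viscous flux identity, the transfer of compactness from $s=Z/\vr$ to the pressure via the Lipschitz bound (\ref{eq2.5}), Feireisl's decomposition (H4) for the non-monotone part, and the oscillation defect measure at $\gamma=9/5$. The differences (testing with $T_k(\vr+Z)$ rather than $T_k(\vr)$; a simpler artificial pressure $\delta(\vr^\Gamma+Z^\Gamma)$ without the cross terms $\vr^2Z^{B-2}+Z^2\vr^{B-2}$ of (\ref{eq3.1})) are cosmetic variants.

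One step, as you state it, would fail. You claim that the compactness of $s$ ``forces $s_n\to s$ a.e.'' and that ``this in turn gives strong convergence of $\mathcal R(\vr_n,s_n)$.'' First, what Proposition \ref{L2} actually yields is only the $\vr_n$-weighted convergence $\int_\Omega\vr_n|s_n-s|^p\to 0$; nothing is known about $s_n$ on the vacuum set of $\vr$. This is harmless for the pressure because the Lipschitz constant $L_P(\vr)$ in (\ref{eq2.5}) vanishes like a positive power of $\vr$ near zero, so the weighted convergence suffices to kill $P(\vr_n,\vr_n s_n)-P(\vr_n,\vr_n s)$. Second, and more seriously, $\mathcal R(\vr_n,s_n)$ cannot converge strongly before you know $\vr_n\to\vr$ a.e., since it depends on $\vr_n$; your argument is circular at this point. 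After discarding the monotone part $\mathcal P(\cdot,s)$ by Proposition \ref{p3}, the effective viscous flux identity leaves the defect $\int_0^\tau\int_\Omega(\overline{\overline{\mathcal R(\vr,s)\vr}}-\overline{\overline{\mathcal R(\vr,s)}}\,\vr)$ on the right-hand side of the $\vr\ln\vr$ inequality, and this must be \emph{absorbed}, not discarded: one uses that $z\mapsto\Lambda z\ln z\pm\mathcal R(z,s)$ is convex for $\Lambda$ large (here the uniform $C^2$ bound and compact support of $\mathcal R(\cdot,s)$ from (H4) are exactly what is needed) to dominate this defect by $\Lambda(1+R_0)\int_0^\tau\int_\Omega(\overline{\vr\ln\vr}-\vr\ln\vr)$, and then closes with Gronwall and strict convexity of $z\ln z$. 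This convexity/Gronwall absorption is the essential content of Feireisl's non-monotone-pressure argument and is the missing idea in your sketch; without it the pinching of the defect does not close.
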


We are now in position to define weak solutions to problem (\ref{eq1.1bi}--\ref{eq1.3bi}).

\begin{df}\label{d1bi}
The quadruple $(\mathfrak{a}, \vr_-,\vr_+,\vu)$ is a bounded energy weak solution to problem (\ref{eq1.1bi}--\ref{eq1.3bi}), if $0\le\mathfrak{a}\le 1$,
$\vr_\pm \geq 0$ a.a. in $I\times \Omega$, $\vr^\pm \in L^\infty(I;L^{1}(\Omega))$, $\vu \in L^2(I;W^{1,2}_0(\Omega;R^3))$, $(\mathfrak{a}\vr_++(1-\mathfrak{a})\vr_- )|\vu|^2 \in L^\infty(I;L^1(\Omega))$, $\mathfrak{P}_-(\vr_-)= \mathfrak{P}_+(\vr_+) \in L^1(I\times \Omega)$, and: 
\begin{itemize}
\item Continuity equations (\ref{eq2.7}) are satisfied with $\vr=\mathfrak{a}\vr_+$ and $Z=(1-\mathfrak{a})\vr_-$;
\item Momentum equation (\ref{eq2.8}) is satisfied with $\vr=\mathfrak{a}\vr_+$, $Z=(1-\mathfrak{a})\vr_-$ and with function $P(\vr,Z)$ replaced by $\mathfrak{P}_+(\vr_+)$;
\item There is a non negative function $\mathfrak{H}:(0,1)\times(0,\infty)^2$ such that
\begin{equation} \label{eq2.9bi}
\int_\Omega \Big(\frac 12 (\vr+Z)|\vu|^2 + \mathfrak{H}(\mathfrak{a},\vr_-,\vr_+)\Big)(\tau,\cdot) \dx 
\end{equation}
$$
+ \int_0^\tau \int_\Omega \big(\mu|\Grad \vu|^2 + (\mu+\lambda)(\Div \vu)^2\big)\, {\rm d}x\, {\rm d}t
$$
$$
\leq \int_\Omega \Big(\frac{|\vc{M}_0|^2}{2(\vr_0 +Z_0)} + \mathfrak{H}(\mathfrak{a}_0,\vr_{-,0}, \vr_{+,0})\Big) \dx
$$
for a.a. $\tau \in (0,T)$.
\end{itemize}
\end{df}

The second main result of the paper deals with system (\ref{eq1.1bi}--\ref{eq1.3bi})
and reads:

\begin{thm} \label{t1bi}
Let $0\le\underline a<\overline a<\infty$. Let $G :=\gamma^+ + \gamma^+_{BOG}$
if $\underline{a} =0$ and $G:= \max\{\gamma^+ +\gamma^+_{BOG}, \gamma^- +\gamma^-_{BOG}\}$ if $\underline{a} >0$. Assume
\begin{equation}\label{gamma}
0<\gamma^-<\infty,\; \gamma^+\ge \frac{9}{5},\; \overline\Gamma< G,
\end{equation}
where
$$
\overline\Gamma=
\left\{\begin{array}{c}
\max\{\gamma^+ -\frac{\gamma^+} {\gamma^-}+ 1,
\, \gamma^- +\frac{\gamma^-} {\gamma^+}  -\frac{\gamma^+} {\gamma^-}\}\;\mbox{if $\underline a=0$}\\
\max\{\gamma^+ -\frac{\gamma^+} {\gamma^-}+ 1, 
\gamma^-
+\frac{\gamma^-} {\gamma^+} -1
 \}\;\mbox{if $\underline a>0$}
\end{array}
\right\}.
$$

Suppose that
\begin{equation}\label{Hbi1}
0\le\mathfrak{a}_0\le 1,\;\underline a\mathfrak{a}_0
\vr_{+,0}\le(1-\mathfrak{a}_0)\vr_{-,0}\le \overline a\mathfrak{a}_0
\vr_{+,0},
\end{equation}
$$
\vr_{+,0}\in L^{\gamma^+}(\Omega),\; {\frac{|\vc M_0|^2}{\mathfrak{a}_0
\vr_{+,0}+(1-\mathfrak{a}_0)\vr_{-,0}}\in L^1(\Omega).}
$$
Assume further that 
\begin{equation}\label{Hbi2}
\mathfrak{P}_{\pm}\in C([0,\infty))\cap C^{2}((0,\infty)),\;
\mathfrak{P}_{\pm}(0)=0,\;\mathfrak{P}_{\pm}'(s)>0, s>0,
\end{equation}
$$
\underline a_- s^{\gamma^- -1}- b_-\le \mathfrak{P}_{-}'(s), 
\; \mathfrak{P}_{-}(s)\le
\overline a_- s^{\gamma^-}+b_-,
$$
$$
\underline a_+ s^{\gamma^+ -1}- b_+\le \mathfrak{P}_{+}'(s)\; \le
\;
\overline a_+ s^{\gamma^+-1}+b_+,
$$
$$
|\mathfrak{P}_{\pm}''(s)| \leq d_{\pm} s^{A_{\pm}} + e_{\pm}, \: s\geq r>0,\; \mathfrak{P}_{+}''(s)\ge 0,\,s\in (0,\infty)
$$
with some positive constants $a_\pm$, $b_\pm$, $d_{\pm}$, $e_{\pm}$ and $A_{\pm}$.\footnote{Note that $d_{\pm}$ or $e_{\pm}$ may blow up when $r\to 0^+$.} Suppose further that
\begin{description}
\item{\it 1.}
\begin{equation}\label{Hbi3+}
\sup_{s\in (0,1)}s^{\underline\Gamma}\,\frac{\mathfrak P_{+}'(s+\mathfrak{q}^{-1}(\overline a s))(s+\mathfrak{q}^{-1}(\overline a s))^{2}}{s \mathfrak{q}(s)}\le\overline c<\infty
\end{equation}
with some $\underline\Gamma\in [0,1)$, 
where
\begin{equation}\label{mathfrakQ}
\mathfrak{q}=\mathfrak{P}_-^{-1}\circ\mathfrak{P}_+.
\end{equation}
\item{\it 2.}
\begin{equation}\label{Hbi4}
0<\underline q=\inf_{s\in (0,\infty)}\frac {\mathfrak{q}(s)}{s\mathfrak{ q}'(s) +\mathfrak{q}(s)}.
\end{equation}
\end{description}

Then  problem (\ref{eq1.1bi}--\ref{eq1.3bi}) admits at least one weak solution in the sense of Definition \ref{d1bi}. Moreover,  $\mathfrak{a}\vr_+$ belongs to the space $C_{weak}([0,T); L^{\gamma^+} (\Omega))$ and
and $(1- \mathfrak{a})\vr_-$ belongs to the space $C_{weak}([0,T); L^{q_{\gamma^+,\gamma^-}} (\Omega))$,
the vector field $(\mathfrak{a}\vr_+ + (1- \mathfrak{a})\vr_-)\vu$ belongs to $C_{weak}([0,T); L^r$ $ (\Omega;R^3))$ for some $r>1$, $\mathfrak{P}_{\pm}(\vr_\pm) \in L^r((0,T)\times\Omega)$ for some $r>1$ and
the function $\mathfrak{H}$ in the energy inequality (\ref{eq2.9bi}) is  given by formula
$$
\mathfrak{H}(\mathfrak{a},\vr_-,\vr_+)=
\mathfrak{a}\vr_+\int_0^{\mathfrak{a}\vr_+}\frac{\mathfrak{P}_+\circ \mathfrak{R}\Big(s,\,\frac{(1-\mathfrak{a})\vr_-}{\mathfrak{a}\vr_+}\,s\Big)}{s^2}\,{\rm d}s,
$$
where $\mathfrak{R}(s,z)$ is the unique solution in $[\mathfrak{a}\vr_+,\infty)$ of equation
$$
\mathfrak{R} \mathfrak{q}(\mathfrak{R})-\mathfrak{q}(\mathfrak{R})s-\mathfrak{R}z=0.
$$
\end{thm}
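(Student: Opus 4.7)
The plan is to reduce Theorem~\ref{t1bi} to Theorem~\ref{t1} by a change of unknowns that recasts the bi-fluid system \eqref{eq1.1bi}--\eqref{eq1.3bi} into an academic system of the form \eqref{eq1.1}--\eqref{eq1.3}, and then to translate the resulting weak solution back to the original variables. First I would set $\vr:=\mathfrak{a}\vr_+$ and $Z:=(1-\mathfrak{a})\vr_-$, so that the first two equations of \eqref{eq1.1bi} become exactly the continuity equations of \eqref{eq1.1} and the inertial terms match automatically. By \eqref{Hbi2}, $\mathfrak{P}_\pm$ are strictly increasing continuous bijections of $[0,\infty)$, so the algebraic closure forces $\vr_-=\mathfrak{q}(\vr_+)$ with $\mathfrak{q}$ as in \eqref{mathfrakQ}. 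Writing $\mathfrak{a}=\vr/\vr_+$ and substituting into the definition of $Z$ yields the implicit relation
$$
\vr_+\mathfrak{q}(\vr_+) - \mathfrak{q}(\vr_+)\,\vr - \vr_+ Z = 0,
$$
which is exactly the equation defining $\mathfrak{R}(\vr,Z)$ in the theorem. Assumption \eqref{Hbi4}, which bounds the logarithmic derivative of $\mathfrak{q}$ away from $-1$, ensures that the left-hand side is strictly increasing in $\vr_+\ge \vr$, so that $\vr_+=\mathfrak{R}(\vr,Z)$ is a well-defined $C^1$ function on $\mathcal{O}_{\underline a}\cap(0,\infty)^2$; then $P(\vr,Z):=\mathfrak{P}_+(\mathfrak{R}(\vr,Z))$ is the pressure of the new system.

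Next I would verify that $P$ satisfies Hypotheses~(H1)--(H5). Assumption \eqref{Hbi1} gives (H1) with the same $\underline a,\overline a$, and (H2) follows from \eqref{Hbi1} and the $L^{\gamma^+}$ bound on $\vr_{+,0}$. For (H3), along rays $Z=s\vr$ with $s\in[\underline a,\overline a]$, I would combine the polynomial control of $\mathfrak{P}_\pm$ from \eqref{Hbi2} with the implicit equation to extract the asymptotics of $\mathfrak{R}(\vr,s\vr)$ as $\vr\to\infty$ and $\vr\to 0^+$; the constraint \eqref{gamma} is then precisely what produces the two-sided bound \eqref{eq2.2} with $\gamma=\gamma^+$ and the admissible $\beta$ dictated by $\gamma^-$, while the near-zero bound \eqref{?!+} is a direct reformulation of \eqref{Hbi3+}. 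Estimate \eqref{eq2.5-} on $\partial_Z P$ follows by implicit differentiation of the defining equation of $\mathfrak{R}$ using \eqref{Hbi2} and \eqref{Hbi4}. For the decomposition \eqref{eq2.4} in (H4), the convexity $\mathfrak{P}_+''\ge 0$ in \eqref{Hbi2} guarantees that $\vr\mapsto P(\vr,\vr s)$ is non-decreasing away from zero, uniformly in $s\in[\underline a,\overline a]$; the possible loss of monotonicity near the origin is absorbed into a non-negative compactly supported $C^2$ correction $\mathcal{R}(\vr,s)$, in the spirit of Feireisl. Finally, (H5) follows from the bound on $\mathfrak{P}_\pm''$ in \eqref{Hbi2} combined with implicit differentiation of $\mathfrak{R}$.

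Once (H1)--(H5) are in hand (a computation to be carried out in Section~\ref{sdod1}), Theorem~\ref{t1} supplies a weak solution $(\vr,Z,\vu)$ of \eqref{eq1.1}--\eqref{eq1.3} with the announced regularity and with $(\vr(t,x),Z(t,x))\in\mathcal{O}_{\underline a}$ a.e. I would then define
$$
\vr_+ := \mathfrak{R}(\vr,Z),\qquad \vr_- := \mathfrak{q}(\vr_+),\qquad \mathfrak{a} := \vr/\vr_+,
$$
with the convention \eqref{conv} on $\{\vr=0\}$. The inclusion $(\vr,Z)\in\mathcal{O}_{\underline a}$ translates pointwise into $0\le\mathfrak{a}\le 1$ and $\vr_\pm\ge 0$, and by construction $P(\vr,Z)=\mathfrak{P}_+(\vr_+)=\mathfrak{P}_-(\vr_-)$. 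The continuity equations, the momentum equation, and the regularity class of Definition~\ref{d1bi} are then immediate transcriptions of the corresponding items of Definition~\ref{d1}. A direct computation from \eqref{HP} (using the substitution $t\mapsto \mathfrak{R}(t,tZ/\vr)$ inside the integral) shows that $H_P(\vr,Z)$ agrees with the explicit formula $\mathfrak{H}(\mathfrak{a},\vr_-,\vr_+)$ stated in the theorem, so the energy inequality \eqref{eq2.9} rewrites as \eqref{eq2.9bi}.

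The main obstacle is the verification of (H3), specifically the two-sided bound \eqref{eq2.2} and the near-zero condition \eqref{?!+}, for the implicitly defined $P$. This requires sharp asymptotics of $\mathfrak{R}(\vr,Z)$ in both regimes along every admissible ray $Z=s\vr$, $s\in[\underline a,\overline a]$, and this is precisely where the somewhat intricate restrictions \eqref{gamma} on the pair $(\gamma^+,\gamma^-)$, the near-zero condition \eqref{Hbi3+}, and the monotonicity restriction \eqref{Hbi4} on $\mathfrak{q}$ enter in a non-trivial way. Once these asymptotics are obtained, the decomposition (H4) and the Lipschitz bounds (H5) follow by standard implicit-function-theorem computations, and the rest of the proof is a direct invocation of Theorem~\ref{t1}.
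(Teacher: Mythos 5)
Your proposal follows essentially the same route as the paper: introduce $\vr=\mathfrak{a}\vr_+$, $Z=(1-\mathfrak{a})\vr_-$, solve the implicit relation $\vr_+\mathfrak{q}(\vr_+)-\mathfrak{q}(\vr_+)\vr-Z\vr_+=0$ for $\vr_+=\mathfrak{R}(\vr,Z)$, set $P(\vr,Z)=\mathfrak{P}_+(\mathfrak{R}(\vr,Z))$, verify Hypotheses (H3--H5) via the implicit function theorem and the growth assumptions, and invoke Theorem \ref{t1}. The only small inaccuracies are attributional — well-posedness of $\mathfrak{R}$ comes from monotonicity of $\mathfrak{P}_\pm$ alone, with \eqref{Hbi4} instead supplying the two-sided comparison $\underline c(\vr+\mathfrak{q}^{-1}(Z))\le\vr_+\le\overline c(\vr+\mathfrak{q}^{-1}(Z))$, and \eqref{Hbi3+} feeds the bound \eqref{eq2.5-} on $\partial_ZP$ near zero rather than \eqref{?!+} — but these do not affect the correctness of the strategy.
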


\noindent
\noindent
{\bf Remark \ref{se2}.3}
\noindent
{\it
\begin{description}
\item{\it 1.} It is to be noticed that the results of both Theorems \ref{t1} and \ref{t1bi}
remain valid --- after well known modifications in the definition of weak solutions in these cases --- in the space periodic setting (if $\Omega$ is a periodic periodic cell) or if we replace the no-slip boundary conditions
 (\ref{eq1.2}), and (\ref{eq1.2bi}), respectively, by the Navier conditions
$$
\vu\cdot\vc n|_{\partial\Omega}=0, \;\Big[\mu\Big(\nabla\vu+\nabla\vu^T-\frac 23\tn{I}{\rm div}\vu)+(\lambda-\frac\mu 3) \tn{I}{\rm div}\vu\Big]\vc n\times\vc n|_{\partial\Omega}={\bf 0}.
$$
In the above $\tn I$ denotes the identity tensor on $R^3$ and $\vc n$ is the outer normal to $\partial\Omega$.
\item{\it 2.} Condition (\ref{Om}) on the regularity of the domain $\Omega$ in both Theorems  \ref{t1} and \ref{t1bi}  could be relaxed up to a bounded Lipschitz domain via the technique described in \cite{FNPdom}. 
\item{\it 3.} In the case $0<\beta<{\gamma+\gamma_{BOG}}$ assumption (\ref{eq2.2}) of Theorem \ref{t1} can be weaken as
$$
\underline C (\vr^\gamma -1)\le P(\vr,Z) \leq \overline C(\vr^\gamma + Z^\beta +1),
$$
 for all $(\vr,Z)\in {\cal O}_{\underline a}$ at expense of slightly stronger assumption {(\ref{eq2.5-})} which has to be valid in this case with
$0<\overline\Gamma<\max\{\gamma+\gamma_{BOG}, \gamma+\beta_{BOG}\}$ if $\underline a>0$. Under this circumstances the value of $q_{\gamma,\beta}$ in Theorem 1
is $\gamma$ if $\beta<\gamma+\gamma_{BOG}$ and $\beta$ if $\gamma\ge \gamma+\gamma_{BOG}$.
\item{\it 4.}  Assumptions of Theorem \ref{t1bi} imposed on partial pressure laws are numerous but not so much restrictive: Suppose that If $\mathfrak{P}_\pm(\vr)\sim_0
\vr^{\alpha^\pm}$ and $\mathfrak{P}_\pm(\vr)\sim_\infty
\vr^{\gamma^\pm}$. Then all hypotheses are satisfied provided $\alpha^+>0$, $\alpha^->\frac{\alpha^+}{\sqrt{\alpha^+ +1}}$,
$\gamma^+\ge 9/5$ and, if $\underline{a}=0$, $0<\gamma^-<\frac{3\gamma^+}{6-2\gamma^+}$ for $\gamma^+<3$, and $\gamma^- {+\frac{(\gamma^-)^2 -(\gamma^+)^2}{\gamma^- \gamma^+}}<G$; if $\underline{a}>0$, then $\gamma^->0$ arbitrary. 
\item{\it 5.} Note that condition (\ref{Hbi3+}) in case $\mathfrak{P}'_+(0) >0$ can be formulated as $ \mathfrak q(s) \geq C s^A$, $A<2$, $s \in (0,1)$.
This can be shown directly from formula \eqref{it3} from the proof of Theorem \ref{t1bi}. 
\end{description}
}

The plan of the paper is following. Section \ref{se4} contains several preliminaries which are useful throughout the existence proof. We particularly concentrate
to the work with continuity and transport equations, which lies in the essence of the proofs.
Section \ref{se2+} is devoted to the proof of Theorem \ref{t1}. It is divided to several parts:
In Subsection \ref{se3} we introduce the approximate system for the model bi-fluid problem (\ref{eq1.1}--\ref{eq1.3}) which depends on three parameters: $N\in \N$, the dimension of the Galerkin approximation for the velocity, $\varepsilon>0$, the parabolic regularization of the continuity equations, and $\delta>0$, the regularization of the pressure.  Subsection \ref{se5} then deals with the existence of a solution to the third level of approximation, the Galerkin approximation for the velocity, and presents the main ideas of the limit passage $N \to \infty$. Subsection \ref{se6} deals with the limit passage $\varepsilon \to 0$ and Subsection \ref{se7} with the limit passage $\delta \to 0$. After completing the
investigation of the model bi-fluid system we will come back to the real bi-fluid system (\ref{eq1.1bi}--\ref{eq1.3bi}) and prove Theorem \ref{t1bi}. This will be done in Section \ref{sdod1}: we will show how to transform the real bi-fluid problem
to the model bi-fluid problem and complete the proof by verifying that the new pressure function in the transformed system verifies all Hypotheses (H3--H5).
 The last section contains discussion of the extension of the results for the model multiphase problem.

\section{Preliminaries} \label{se4}

This section contains several preliminary results used later. 

\subsection{Renormalized continuity and transport equations}

The properties of renormalized solutions to the continuity and transport equations
play important role in this paper. In this section, we shall recall those
needed for our method of the proofs (and prove some of those, whose proofs are not available in the needed form in the literature). The common denominator of all these results is the
DiPerna--Lions renormalization and regularization techniques for the transport equation introduced in \cite{DL}.

A pair of functions $(\vr,\vu)\in L^1(Q_T)\times L^1(0,T; W^{1,1}(\Omega))$ is called a renormalized solution to the continuity equation if
it satisfies
\begin{equation}\label{ce1}
\partial_t\vr+{\rm div}(\vr\vu)=0 \;\mbox{in ${\cal D}'(Q_T)$}
\end{equation}
and
\begin{equation}\label{ce2}
\partial_tb(\vr)+{\rm div}(b(\vr)\vu)+(\vr b'(\vr)- b(\vr)){\rm div}\vu=0 \;\mbox{in ${\cal D}'(Q_T)$}
\end{equation}
with any
\begin{equation}\label{clb}
b\in C^1([0,\infty)),\; b'\in L^\infty((0,\infty)).
\end{equation}

Likewise, a pair of functions $(s,\vu)\in L^1(Q_T)\times L^1(0,T; W^{1,1}(\Omega))$ is called a renormalized solution to the transport equation if
it satisfies
\begin{equation}\label{tr1}
\partial_t s +\vu\cdot\nabla s=0\;\mbox{in ${\cal D}'(Q_T)$},
\end{equation}
and
\begin{equation}\label{tr2}
\partial_t b(s) +\vu\cdot\nabla b(s)=0\;\mbox{in ${\cal D}'(Q_T)$}.
\end{equation}

The following proposition resumes the classical consequences of the Di\-Per\-na--Lions transport theory \cite{DL} applied to the continuity equation, as formulated and resumed in \cite[Chapter 4]{EF70}, \cite[Chapter 7]{NoSt}, \cite[Chapter 10]{FeNoB} We will suppose here that $\Omega$ is a bounded Lipschitz domain (even if some of statements do not need this assumption):

\begin{prop} \label{p1}
\begin{description}
\item{\it 1.} Let
\begin{equation}\label{ce3}
\vr\ge 0, \;\vr\in L^2(Q_T),\;\vu\in L^2(I;W^{1,2}(\Omega;R^3))
\end{equation}
satisfy continuity equation (\ref{ce1}). Then it satisfies its renormalized form (\ref{ce2}).
\item{\it 2.}
Let in addition to (\ref{ce3}),
\begin{equation}\label{ce4}
\vr\ge 0, \;\vr\in L^\infty(I;L^\gamma(\Omega)),\; \gamma >1.
\end{equation}
Then
$$
\vr\in C(\overline I;L^1(\Omega)).
$$
\item {\it 3.} If in addition to (\ref{ce3}), (\ref{ce4})
\begin{equation}\label{ce5}
\vu\in L^2( I; W^{1,2}_0(\Omega;R^3)),
\end{equation}
then both (\ref{ce1}--\ref{ce2}) hold up to the boundary and  in time integrated form,
namely: 1)
\begin{equation}\label{ce6}
\int_\Omega(\vr\varphi)(\tau,\cdot)\,{\rm d} x-\int_\Omega(\vr\varphi)(0,\cdot)\,{\rm d} x=\int_0^\tau\int_{\Omega}\Big(\vr\partial_t\varphi+\vr\vu\cdot\nabla\varphi\Big)\,{\rm d} x\,{\rm d}t
\end{equation}
for all $\tau\in\overline I$ and $\varphi\in C^1(\overline{Q_T})$. \\
2) For any $b$ in class (\ref{clb})
\begin{equation}\label{c7-}
b(\vr)\in C(\overline I; L^1(\Omega)),
\end{equation}
and
\begin{equation}\label{ce7}
\int_\Omega \big(b(\vr)\varphi\big)(\tau,\cdot)\,{\rm d} x-\int_\Omega \big(b(\vr)\varphi\big)(0,\cdot)\,{\rm d} x
\end{equation}
$$
=\int_0^\tau\int_{\Omega}\Big(b(\vr)\partial_t\varphi+b(\vr)\vu\cdot\nabla\varphi-(\vr b'(\vr)-b(\vr)){\rm div}\,\vu\,\varphi\Big)\,{\rm d} x\,{\rm d}t
$$
for all $\tau\in\overline I$ and $\varphi\in C^1(\overline{Q_T})$. 
\item{\it 4.} If in item 3. $b$ belongs solely to
\begin{equation}\label{clb+}
b\in C([0,\infty))\cap C^1((0,\infty)), \; b(s)\le c(1+s^{\frac 56\gamma}),
\end{equation}
$$
\;
sb'-b\in C([0,\infty)),\; sb'(s)-b(s)\le c(1+ s^{\gamma/2}),
$$
then
\begin{equation}\label{c7-b}
b(\vr)\in C_{\rm weak}(\overline I; L^p(\Omega)),\;\mbox{with any $1\le p<6/5$}
\end{equation}
and equation (\ref{ce7}) still continues to hold.
\end{description}
\end{prop}
\noindent
{\bf Remark \ref{se4}.1}
{\it We notice that function  $b(\vr)=\vr\ln\vr$ as well as its truncation
\begin{equation}\label{Lk}
L_k(\vr)=\vr\int_1^\vr\frac{T_k(z)}{z^2}\,{\rm d}z,\; T_k(z)=k T(Z/k), \; k>1,
\end{equation}
$$
T(z)=\left\{\begin{array}{c}
z\mbox{ if } z\in [0,1)\\
2\mbox{ if } z\ge 3
\end{array}
\right\},\; T\in C^{\infty}([0,\infty)),\;\mbox{concave}
$$
verify  assumptions of item 4. of Proposition \ref{p1}. It is this function which is employed in the Feireisl--Lions approach at the very last step of the proof of compactness of the density sequence.}
\\ \\
{\bf Remark \ref{se4}.2}
{\it The statement of Proposition \ref{p1} holds also if the couple $(s,\vu)$
satisfies transport equation (\ref{tr1}) on place of the continuity equation.
In this case: 1) Equation (\ref{ce6}) takes form
\begin{equation}\label{tr6}
\int_\Omega (s\varphi)(\tau,\cdot)\,{\rm d} x-\int_\Omega (s\varphi)(0,\cdot\,){\rm d} x
\end{equation}
$$
=\int_0^\tau\int_{\Omega}\Big(s\partial_t\varphi+s\vu\cdot\nabla\varphi
-s{\rm div}\,\vu\,\varphi\Big)\,{\rm d} x\,{\rm d}t
$$
for all $\tau\in\overline I$ and $\varphi\in C^1(\overline{Q_T})$. 2) For any $b$ in class (\ref{clb})
\begin{equation}\label{c7-!}
b(s)\in C(\overline I; L^1(\Omega)),
\end{equation}
and
\begin{equation}\label{tr7}
\int_\Omega \big(b(s)\varphi\big)(\tau,\cdot){\rm d} x-\int_\Omega \big(b(s)\varphi\big)(0,\cdot){\rm d} x
\end{equation}
$$
=\int_0^\tau\int_{\Omega}\Big(b(s)\partial_t\varphi+b(s)\vu\cdot\nabla\varphi-b(s) {\rm div}\,\vu\,\varphi\Big)\,{\rm d} x\,{\rm d}t
$$
for all $\tau\in\overline I$ and $\varphi\in C^1(\overline{Q_T}).$ 

Equation (\ref{tr7}) continues to hold even with
$$
b\in C([0,\infty))\cap C^1((0,\infty)),\;b(s)\le c(1+s^{\frac \gamma 2}),\;\gamma>1.
$$
In this case, however, $b(\vr)$ belongs solely to $C_{\rm weak}(\overline I, L^p(\Omega))$ for all $1\le p<2$.
} 
\\ \\
The next proposition is a generalization of Proposition \ref{p1} to several
continuity equations and renormalizing functions of several variables. Such generalizations appear to be useful for applications when treating models of compressible fluids with
pressure dependent on several variables subject to continuity equations. One of the first attempts in this directions for particular renormalizing functions were
formulated in (\cite[Lemma 6.1]{FKNZ}, \cite[Section 8.1]{3MNPZ} and studied more systematically later in Vasseur et al. \cite[Lemma 2.4]{VWY}.

\begin{prop} \label{p2}
\begin{description}
\item{\it 1.}
Let   couples $(\vr,\vu)$, $(Z,\vu)$
$$
\vr \in L^2(Q_T), \;(\vr, Z)\in {\cal O}_{0},\;\vu\in L^2(I;W^{1,2}(\Omega;R^3)),
$$
(cf. (\ref{eq2.1})) verify continuity equation (\ref{ce1}). Then for any 
$$
b\in C^1([0,\infty)^2), (\partial_{\vr} b,\partial_Z b)\in L^\infty({\cal O}_0;R^2)
$$
 the function $b(\vr, Z)$
verifies the renormalized continuity equation
\begin{equation}\label{ce2+}
\partial_tb(\vr,Z)+{\rm div}(b(\vr,Z)\vu)
\end{equation}
$$
+(\partial_\vr b(\vr,Z)- Z \partial_Z b(\vr,Z)-b(\vr,Z)){\rm div}\vu=0 \;\mbox{in ${\cal D}'(Q_T)$.}
$$
\item{\it 2.}
If moreover  we have $\vr\in L^\infty(I;L^\gamma(\Omega))$ with some $\gamma>1$ and $\vu\in L^2(I,W_0^{1,2}(\Omega;R^3))$, then
$$
\vr, Z\in C(\overline I; L^1(\Omega)),\; b(\vr,Z) \in C(\overline I; L^1(\Omega))
$$
and equation (\ref{ce2+}) holds in the time integrated form up to the boundary:
\begin{equation}\label{ce7+}
\int_\Omega \big(b(\vr,Z)\varphi\big)(\tau,\cdot)\,{\rm d} x-\int_\Omega \big(b(\vr, Z)\varphi\big)(0,x)\,{\rm d} x =\int_0^\tau\int_{\Omega}\Big(b(\vr,Z)\partial_t\varphi
\end{equation}
$$
+b(\vr,Z)\vu\cdot\nabla\varphi-(\vr \partial_\vr b(\vr,Z)- Z \partial_Z b(\vr,Z)-b(\vr,Z)){\rm div}\,\vu\, \varphi\Big)\,{\rm d} x\,{\rm d}t
$$
for all $\tau\in\overline I$ and $\varphi\in C^1(\overline{Q_T})$.

\end{description}
\end{prop}

We would need to apply this proposition to function $b(\vr,Z)=Z/\vr$ which however
does not satisfy the requested hypotheses. In order to circumvent this difficulty, we shall prove the following proposition:

\begin{prop}\label{bsing}
Let $\vr,Z$ belong to $C(\overline I;L^1(\Omega))$. We define {\it for all} $t\in \overline I$,
 $s(t,x)=\frac{Z(t,x)}{\vr(t,x)}$ in agreement with (\ref{conv}).
Then we have:
\begin{description}
\item{\it 1.} 
If {\it for all $t\in \overline I$}, $0\le Z(t,\cdot)\le \overline a \vr(t,\cdot)$ a.e. in $\Omega$, then
\begin{equation}\label{s2}
\mbox{for all $t\in \overline I$, $0\le s(t,\cdot)\le \overline a$ a.e. in $\Omega$}.
\end{equation}
\item{\it 2.} Suppose moreover that
$$ 
\vr\in L^2(Q_T)\cap L^\infty(I;L^\gamma(\Omega)),\;\mbox{with some $\gamma>1$}
$$
and that both couples $(\vr,\vu)$ and $(Z,\vu)$ satisfy continuity equation (\ref{ce1}) with $\vu\in L^2(I;W^{1,2}(\Omega;R^3))$. Then 
\begin{equation}\label{s3}
s\in C(\overline I;L^q(\Omega))\;\mbox{for all $1\le q<\infty$}
\end{equation}
and the couple $(s,\vu)$ satisfies transport equation (\ref{tr1}).
\item{\it 3.} If moreover $\vu\in L^2(I;W_0^{1,2}(\Omega;R^3))$, then transport equation  (\ref{tr1}) holds up to the boundary in the time integrated form (\ref{tr6}).
\end{description}
\end{prop}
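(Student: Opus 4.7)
\medskip\noindent\textbf{Proof plan for Proposition \ref{bsing}.}

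Part 1 is immediate from the pointwise hypothesis and the convention. Indeed, at any point $(t,x)$ where $\vr(t,x)>0$, the bound $0\le Z(t,x)\le \overline a\vr(t,x)$ forces $0\le s(t,x)=Z(t,x)/\vr(t,x)\le \overline a$, while at any point where $\vr(t,x)=0$ the hypothesis gives $Z(t,x)=0$ as well, so the convention places $s(t,x)\in[0,\overline a]$.

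For Part 2 the idea is to remove the singularity at $\vr=0$ by a one-parameter regularization and then apply Proposition \ref{p2}. For $\eta>0$, set
$$
b_\eta(\vr,Z):=\frac{Z}{\vr+\eta},\qquad (\vr,Z)\in[0,\infty)^2 .
$$
Then $b_\eta\in C^1([0,\infty)^2)$, and a direct calculation shows $|\partial_Z b_\eta|\le 1/\eta$ and, on ${\cal O}_0$, $|\partial_\vr b_\eta|=Z/(\vr+\eta)^2\le \overline a\vr/(\vr+\eta)^2\le \overline a/(4\eta)$. Hence $b_\eta$ fulfills the assumptions of Proposition \ref{p2}, whose part 2 yields $b_\eta(\vr,Z)\in C(\overline I;L^1(\Omega))$ together with the integrated renormalized identity (\ref{ce7+}), in which the renormalization coefficient simplifies to
$$
\vr\,\partial_\vr b_\eta+Z\,\partial_Z b_\eta-b_\eta=-\frac{\vr Z}{(\vr+\eta)^2}.
$$

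The next step is to pass $\eta\to 0^+$. By Part 1 we have the uniform pointwise bounds $0\le b_\eta\le \overline a$ and $|\vr Z/(\vr+\eta)^2|\le \overline a$; moreover, at every point where $\vr>0$ both $b_\eta$ and $\vr Z/(\vr+\eta)^2$ converge to $s$, while at every point where $\vr=0$ (hence $Z=0$) both are identically zero, matching $s$ by convention. Dominated convergence therefore gives $b_\eta\to s$ and $\vr Z/(\vr+\eta)^2\to s$ pointwise a.e.\ in $Q_T$ and in $L^p(Q_T)$ for every $p<\infty$. Applying Fubini, for a.a.\ $\tau\in I$ we get $b_\eta(\tau,\cdot)\to s(\tau,\cdot)$ pointwise a.e.\ in $\Omega$, and, again by dominated convergence, in $L^p(\Omega)$ for every $p<\infty$. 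Passing to the limit in (\ref{ce7+}) at such $\tau$ produces
$$
\int_\Omega (s\varphi)(\tau,\cdot)\,{\rm d}x-\int_\Omega (s\varphi)(0,\cdot)\,{\rm d}x=\int_0^\tau\!\!\int_\Omega\Big(s\,\partial_t\varphi+s\,\vu\cdot\nabla\varphi-s\,{\rm div}\,\vu\,\varphi\Big)\,{\rm d}x\,{\rm d}t,
$$
valid for a.a.\ $\tau\in\overline I$ and all $\varphi\in C^1(\overline{Q_T})$. Choosing $\varphi\in C^\infty_c(I\times\Omega)$ yields the distributional transport equation (\ref{tr1}) for $(s,\vu)$.

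Having (\ref{tr1}) with $s\in L^\infty(Q_T)\subset L^2(Q_T)$ and $\vu\in L^2(I;W^{1,2}(\Omega;R^3))$, the DiPerna--Lions regularization, quoted in Remark \ref{se4}.2 (i.e., the transport version of items 1--2 of Proposition \ref{p1}), gives $s\in C(\overline I;L^1(\Omega))$; combining this with the $L^\infty$-bound from Part 1 and interpolating, we obtain $s\in C(\overline I;L^q(\Omega))$ for every $1\le q<\infty$, which is (\ref{s3}). Part 3 then follows by invoking item 3 of Proposition \ref{p1} in the transport form of Remark \ref{se4}.2: the extra hypothesis $\vu\in L^2(I;W^{1,2}_0)$ together with $s\in L^\infty(Q_T)$ promotes (\ref{tr1}) to the up-to-the-boundary integrated identity (\ref{tr6}).

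The main technical point is the limit $\eta\to 0^+$ at the singular set $\{\vr=0\}$; the crucial observation that makes it work is Part 1, which gives the uniform bound $0\le b_\eta\le \overline a$ independent of $\eta$ and allows dominated convergence to cancel the apparent blow-up of the renormalization term $\vr Z/(\vr+\eta)^2$.
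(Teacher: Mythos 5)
Your argument is correct and follows essentially the same route as the paper's own proof: the regularization $b_\eta(\vr,Z)=Z/(\vr+\eta)$, the application of Proposition \ref{p2} to obtain the renormalized identity with coefficient $-\vr Z/(\vr+\eta)^2$, the passage $\eta\to 0^+$ by dominated convergence using the uniform bound by $\overline a$, and the final appeal to Remark \ref{se4}.2 for the continuity in time and Part 3. The only cosmetic difference is that you verify the derivative bounds for $b_\eta$ on ${\cal O}_0$ explicitly, which the paper leaves implicit.
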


\begin{proof}
The first statement (item 1.) is a direct consequence of the definition of function  $s$.

Proof of the second statement (item 2.): We consider sequence of functions
$$
b_\delta: [0,\infty)^2\mapsto[0,\infty), \; b_\delta(\vr,Z)= \frac Z{\vr+\delta},\;
\delta\in (0,1/2).
$$
Functions $b_\delta$ and couples $(\vr,\vu)$ and $(Z,\vu)$ satisfy all assumptions of Proposition \ref{p2}. Consequently,
$b_\delta(\vr, Z)\in C(\overline I, L^q(\Omega))$, $1\le q<\infty$ (by interpolation)
and
\begin{equation}\label{ce7+.1-}
\partial_t b_\delta(\vr,Z)+{\rm div}(b_\delta(\vr,Z)\vu) -\frac{Z\vr}{(\vr+\delta)^2}
{\rm div}\vu=0\;\mbox{in ${\cal D}'(Q_T)$}
\end{equation}
if $\vu\in L^2(I;W^{1,2}(\Omega;R^3))$, or even
\begin{equation}\label{ce7+.1}
\int_\Omega \big(b_\delta(\vr,Z)\varphi\big)(\tau,\cdot)\,{\rm d} x-\int_\Omega \big(b_\delta(\vr, Z)\varphi\big)(0,\cdot)\,{\rm d} x =\int_0^\tau\int_{\Omega}\Big(b_\delta(\vr,Z)\partial_t\varphi
\end{equation}
$$
+b_\delta(\vr,Z)\vu\cdot\nabla\varphi+\frac{Z\vr}{(\vr+\delta)^2}
\,{\rm div}\,\vu \, \varphi\Big)\,{\rm d} x\,{\rm d}t
$$
if $\vu\in L^2(I;W_0^{1,2}(\Omega;R^3))$.

We easily verify that
$$
\mbox{for all $t\in\overline I$,} |b_\delta(\vr,Z)(t,\cdot)|+\Big|\frac{Z\vr}{(\vr+\delta)^2}(t,\cdot)
\Big|\le \overline a
\;\mbox{a.a. in $\Omega$},
$$
$$
\mbox{for all $t\in\overline I$,}\;b_\delta(\vr(t,\cdot), Z(t,\cdot))\to s(t,\cdot)\;\mbox{a.a. in $\Omega$ as $\delta\to 0$}.
$$
We thus obtain via the Lebesgue dominated convergence theorem
$$
\mbox{for all $\tau\in\overline I$,}\;
\int_\Omega (b_\delta(\vr,Z)\varphi)(\tau,\cdot)\,{\rm d} x\to \int_\Omega (s\varphi)(\tau,\cdot)\,{\rm d} x
$$
$$
\mbox{for all $\tau\in\overline I$,}\;\int_0^\tau\int_\Omega b_\delta(\vr,Z)\vu\cdot\nabla\varphi\,{\rm d}x\,{\rm d}t\to \int_0^\tau\int_\Omega s\vu\cdot\nabla\varphi\,{\rm d}x\,{\rm d}t
$$
and
$$
\mbox{for all $\tau\in\overline I$,}\;\int_0^\tau\int_\Omega \frac{Z\vr}{(\vr+\delta)^2}
{\rm div}\,\vu \varphi\,{\rm d} x\,{\rm d}t\to\int_0^\tau\int_\Omega s
{\rm div}\,\vu \varphi\,{\rm d} x\,{\rm d}t.
$$
We can therefore pass to the limit in equations (\ref{ce7+.1-}), (\ref{ce7+.1})
in order to recover equations (\ref{tr1}) or (\ref{tr6}), according to the case.

Finally, according to Remark \ref{se4}.2, function $s$ belongs to $C(\overline I;L^1(\Omega))$, and a fortiori, by interpolation, also to  $C(\overline I;L^q(\Omega))$,
$1\le q<\infty$. This completes the proof of Proposition \ref{bsing}. 
\end{proof}
 
We shall still need to combine solutions of continuity equation with solutions of transport equation. The next statement is again a straightforward consequence of the
DiPerna--Lions transport theory.

\begin{prop} \label{p2+}
\begin{description}
\item{\it 1.}
Let
$$
\vr \in L^2(Q_T), s\in L^\infty(Q_T),\;\vu\in L^2(I;W^{1,2}(\Omega;R^3)),
$$
and let the couple $(\vr,\vu)$ verify continuity equation (\ref{ce1}) and the couple
$(s,\vu)$ transport equation (\ref{tr1}).
Then $s\in C(\overline I;L^1(\Omega))$ and the product $s\vr$ satisfies the continuity equation in the sense of distributions
on $Q_T$. 
\item{\it 2.}
If moreover we have $\vr\in L^\infty(I;L^\gamma(\Omega))$ with some $\gamma>1$ and $\vu\in L^2(I,W_0^{1,2}(\Omega;R^3))$, then
$$
\vr\in C(\overline I; L^1(\Omega)),\; s \vr \in C(\overline I; L^1(\Omega))
$$
and the continuity equation  for $s\vr$ holds in the time integrated form up to the boundary:
\begin{equation}\label{ctr}
\int_\Omega (s\vr\varphi)(\tau,\cdot)\,{\rm d} x-\int_\Omega (s\vr\varphi)(0,\cdot)\, {\rm d} x =\int_0^\tau\int_{\Omega}\Big(s\vr\partial_t\varphi
+ s\vr\vu\cdot\nabla\varphi\Big)\,{\rm d} x\,{\rm d}t
\end{equation}
for all $\tau\in\overline I$ and $\varphi\in C^1(\overline{Q_T})$.
\end{description}
\end{prop}

The next proposition is one of the crucial point of the compactness argument, compare with Vasseur et al. \cite{VWY}.
\begin{prop}\label{L2}
\begin{description}
\item{\it 1.} 
Let 
$$
\vu_n\in L^2(I,W_0^{1,2}(\Omega;R^3)),\;
(\vr_n, Z_n)\in {\cal O}_0\cap \Big(C(\overline I;L^1(\Omega))\cap L^2(Q_T)\Big)^2.
$$
Suppose that
\begin{multline*}
\sup_{n\in N}\Big(\|\vr_n\|_{L^\infty(I;L^\gamma(\Omega))}+\|Z_n\|_{L^\infty
(I;L^{\gamma}(\Omega))}\\ + \|\vr_n\|_{L^2(Q_T)}+ \|\vu_n\|_{L^2(I;W^{1,2}(\Omega))}\Big)
<\infty,
\end{multline*}
where $\gamma>6/5$, 
and that both couples $(\vr_n,\vu_n)$, $(Z_n,\vu_n)$ satisfy continuity equation
(\ref{ce1}). Then, 
up to a subsequence (not relabeled)
$$
\begin{aligned}
(\vr_n, Z_n)&\to (\vr,Z)\;\mbox{in $(C_{\rm weak}(\overline I;L^\gamma(\Omega)))^2$},
\\
\vu_n&\rightharpoonup\vu\;\mbox{in $L^2(I;W^{1,2}(\Omega;R^3))$,}
\end{aligned}
$$
where $(\vr,Z)$ belongs to spaces
$$
{\cal O}_0\cap (L^2(Q_T))^2\cap (L^\infty(I,L^\gamma(I,\Omega)))^2\cap (C(\overline I;L^1(\Omega))^2
$$
and $(\vr,\vu)$ as well as (Z,\vu) verify continuity equation in the renormalized sense.
\item{\it 2.} We define in agreement with convention (\ref{conv}) for all $t\in \overline I$, 
\begin{equation}\label{sn}
s_n(t,x)=\frac{Z_n(t,x)}{\vr_n(t,x)},\quad s(t,x)=\frac {Z(t,x)}{\vr(t,x)}.
\end{equation}
Suppose in addition to assumptions of item 1. that
$$
\int_{\Omega}{\vr_n(0,x)}s_n^2(0,x){\rm d} x\to \int_{\Omega}\vr(0,x) s^2(0,x)\,{\rm d} x.
$$
Then $s_n, s\in C(\overline I;L^q(\Omega))$, $1\le q<\infty$ and  for all $t\in \overline I$, 
$0\le s_n(t,x)\le\overline a$, $0\le s(t,x)\le \overline a$ for a.a. $x\in \Omega$. Moreover, both
$(s_n,\vu_n)$ and $(s,\vu)$ satisfy
transport equation (\ref{tr6}). 
\item{\it 3.}
Finally,
\begin{equation}\label{cvs}
\int_{\Omega}(\vr_n|s_n-s|^p)(\tau,\cdot)\,{\rm d} x\to 0\;\mbox{with any $1\le p<\infty$}
\end{equation}
for all $\tau\in \overline I$.
\end{description}
\end{prop}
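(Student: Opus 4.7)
Items 1 and 2 are essentially a compactness repackaging of the earlier propositions; the substance lies in the strong convergence (\ref{cvs}) of item 3, which I would leave for last.

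For \emph{item 1}, I would first extract a subsequence by Banach--Alaoglu so that $\vr_n \rightharpoonup^* \vr$, $Z_n \rightharpoonup^* Z$ in $L^\infty(I; L^\gamma(\Omega))$, $\vr_n \rightharpoonup \vr$ in $L^2(Q_T)$ and $\vu_n \rightharpoonup \vu$ in $L^2(I; W^{1,2}(\Omega; R^3))$. The continuity equations bound $\partial_t \vr_n$ and $\partial_t Z_n$ in $L^2(I; W^{-1,r}(\Omega))$ for a suitable $r>1$; since $\gamma > 6/5$ provides the compact embedding $L^\gamma(\Omega) \hookrightarrow\hookrightarrow W^{-1,2}(\Omega)$, an Aubin--Lions/Arzel\`a--Ascoli argument upgrades this to $(\vr_n, Z_n) \to (\vr, Z)$ in $(C_{\mathrm{weak}}(\overline I; L^\gamma(\Omega)))^2$. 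The cone $\mathcal{O}_0$ is weakly closed, so $(\vr, Z) \in \mathcal{O}_0$, and Proposition \ref{p1} then yields the renormalized continuity equation for both pairs. \emph{Item 2} follows by applying Proposition \ref{bsing} to each sequence and to the limit: this supplies $s_n, s \in [0, \overline a] \cap C(\overline I; L^q(\Omega))$ for any $1 \le q < \infty$ and the transport equation for $(s_n, \vu_n)$ and $(s, \vu)$.

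The heart of the statement is \emph{item 3}. The plan is to build a conservation law for $\vr_n s_n^2$ and pass to the limit. Since $(s_n, \vu_n)$ satisfies the transport equation, its renormalization (Remark \ref{se4}.2) gives that $(s_n^2, \vu_n)$ also does; combining this with the continuity equation for $(\vr_n, \vu_n)$ through Proposition \ref{p2+} shows that $\vr_n s_n^2$ satisfies the continuity equation in time-integrated form up to the boundary. Testing with $\varphi \equiv 1$ gives
\begin{equation*}
\int_\Omega (\vr_n s_n^2)(\tau, \cdot)\, \dx = \int_\Omega (\vr_n s_n^2)(0, \cdot)\, \dx, \qquad \tau \in \overline I,
\end{equation*}
and identically for the limit. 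The hypothesis on initial data then forces $\int_\Omega (\vr_n s_n^2)(\tau)\, \dx \to \int_\Omega (\vr s^2)(\tau)\, \dx$. I would next expand
\begin{equation*}
\vr_n |s_n - s|^2 = \vr_n s_n^2 - 2 Z_n s + \vr_n s^2,
\end{equation*}
using $\vr_n s_n = Z_n$. At fixed $\tau$, the $C_{\mathrm{weak}}(\overline I; L^\gamma)$-convergence of $Z_n, \vr_n$ together with $s(\tau, \cdot) \in L^\infty(\Omega)$ give $\int_\Omega Z_n s\, \dx \to \int_\Omega Zs\, \dx$ and $\int_\Omega \vr_n s^2\, \dx \to \int_\Omega \vr s^2\, \dx$. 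The convention (\ref{conv}) and the inclusion $(\vr, Z) \in \mathcal{O}_0$ (which forces $Z = 0$ wherever $\vr = 0$) enforce $Z = \vr s$ a.e., so the three terms combine to give $\int_\Omega \vr_n(s_n - s)^2(\tau, \cdot)\, \dx \to 0$. For $p \ge 2$ the bound $|s_n - s| \le 2\overline a$ gives $\vr_n|s_n - s|^p \le (2\overline a)^{p-2}\vr_n|s_n - s|^2$; for $1 \le p < 2$, H\"older's inequality combined with the uniform $L^1$-bound on $\vr_n$ closes the case.

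The main obstacle is conceptual rather than technical: $s_n$ is not expected to converge strongly in any reasonable sense, and one cannot identify the weak limit of $\vr_n s_n$ as $\vr$ times the weak limit of $s_n$. The trick is to avoid this identification entirely and transport the information through the two mass-type conserved quantities $\vr_n s_n = Z_n$ and $\vr_n s_n^2$, using $s$ only as a bounded $L^\infty$ test function paired against the weakly convergent sequences $\vr_n$ and $Z_n$.
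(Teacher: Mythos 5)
Your proposal is correct and follows essentially the same route as the paper's own proof: items 1 and 2 are dispatched by the standard compactness machinery and Propositions \ref{p1}, \ref{bsing}, and item 3 is obtained by conserving $\int_\Omega \vr_n s_n^2\,\dx$ via the transport/continuity combination of Proposition \ref{p2+}, pairing the weakly convergent $\vr_n$, $Z_n$ against the bounded function $s$, expanding the square, and interpolating to general $p$. No gaps.
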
 
\begin{proof}
The first item is nowadays mathematical folklore. We refer e.g. to monographs \cite{EF70}
or \cite{NoSt} and skip the proof.

The second item is nothing but the statement of Proposition \ref{bsing}.

We shall now show the third statement. We apply to sequence $s_n$ Proposition \ref{bsing}, notably the fact that it satisfies the transport equation (\ref{tr1}). We now employ conclusion of Remark \ref{se4}.2 and take advantage of the fact that
$s_n(t)$ admits for all $t\in\overline I$ an $L^\infty(\Omega)$ bound in order
to verify that couples $(s_n^2,\vu_n)$ satisfy  transport equation (\ref{tr1})$_{s=s_n^2,\vu=\vu_n}$ as well. Next we recall that couples $(\vr_n,\vu_n)$ verify continuity equation (\ref{ce1})$_{\vr_n,\vu_n}$; whence, in virtue of Proposition \ref{p2+}, functions $\vr_n s_n^2\in C(\overline I;L^1(\Omega))$ (and even in $C(\overline I;L^q(\Omega))$, $1\le q<\gamma,$ by interpolation), and, moreover, couples $(\vr_n s_n^2,\vu_n)$ satisfy time integrated continuity equation up to the boundary, see (\ref{ce6})$_{\vr=\vr_ns_n^2,\vu=\vu_n}$. Taking into account the convergence of initial data we get from  this equation with test function $\varphi=1$,
\begin{equation}\label{s1!!}
\lim_{n\to\infty}\int_{\Omega}({\vr_n} s_n^2)(\tau,\cdot)\,{\rm d}x=\int_{\Omega}
(\vr s^2)(0,\cdot)\,{\rm d}x\;\mbox{for all $\tau\in \overline I$}.
\end{equation}

Repeating the same reasoning with limits $(\vr,Z,s,\vu)$, we get by the same token
\begin{equation}\label{s2!!}
\lim_{n\to\infty}\int_{\Omega}(\vr_n s^2)(\tau,\cdot)\,{\rm d}x=
\int_{\Omega}(\vr s^2)(\tau,\cdot)\,{\rm d}x=\int_{\Omega} (\vr s^2)(0,\cdot)\,{\rm d}x\;\mbox{for all $\tau\in \overline I$}.
\end{equation}
The first identity holds due to the  convergence of $\vr_n$ in $C_{\rm weak}(\overline I;L^\gamma(\Omega))$.

Finally we have
$$
\lim_{n\to\infty}\int_\Omega(\vr_n s_n s)(\tau,\cdot)\,{\rm d}x=
\lim_{n\to\infty}\int_\Omega (Z_n s)(\tau,\cdot)\,{\rm d}x
$$
$$
= 
\int_\Omega (Z s)(\tau,\cdot)\,{\rm d}x\int_\Omega (\vr s^2)(\tau,\cdot)\,{\rm d}x = \int_\Omega (\vr s^2)(0,\cdot)\,{\rm d}x
$$
for all $\tau\in\overline I$.
The first identity holds due to (\ref{sn}), the second one due to the  convergence of $Z_n$ in $C_{\rm weak}(\overline I;L^\gamma(\Omega))$, the third one due to (\ref{conv})
and the last one due to (\ref{s2!!}). Resuming this part, we get
\begin{equation}\label{s3!}
\lim_{n\to\infty}\int_\Omega(\vr_n s_n s)(\tau,\cdot)\,{\rm d}x=\int_\Omega (\vr s^2)(0,\cdot)\,{\rm d}x \;\mbox{for all $\tau\in \overline I$}.
\end{equation} 

Putting together (\ref{s1!!}--\ref{s3!}) gives

$$
\lim_{n\to\infty}\int_{\Omega}(\vr_n( s_n-s)^2)(\tau,\cdot)\,{\rm d}x =0 \;\mbox{for all $\tau\in \overline I$}.
$$
This yields (\ref{cvs}) by interpolation.
\end{proof}

\subsection{Other tools}

The next result follows from the maximal parabolic regularity theory and comparison principle applied to the regularized continuity equation (\ref{eq3.2}), cf. e.g. Denk, Hieber, Pr\"uss \cite[Theorem 2.1]{DHP} and \cite[Proposition 7.39]{NoSt}. 
\begin{prop}\label{parabolic}
Suppose that  
{ $\vr_0\in W^{1,2}(\Omega)$}, $\vu\in L^\infty(0,T; W^{1,\infty}(\Omega;R^3))$, { $\vu|_{(0,T)\times\partial\Omega}=\mathbf{0}$}. Then we have:
\begin{description}
\item{\it 1.}
The parabolic  problem (\ref{eq3.2}) admits a unique solution in the class
\begin{equation}\label{parabolicr}
\vr\in L^2(I;W^{2,2}(\Omega))\cap W^{1,2}(I;L^2(\Omega)).
\end{equation}
\item{\it 2.} If moreover $0<\underline\vr\le\vr_0\le\overline\vr<\infty$ a.a.
in $\Omega$, then there is $0<\underline c<\overline c<\infty$ dependent on
$\tau,\underline\vr, \overline\vr$ and $\|{\rm div}\,\vu\|_{L^1(I;L^\infty(\Omega))}$ 
such that
$$
\mbox{for all $\tau\in \overline I$},\; \underline c\le \vr(\tau, x)\le\overline c\;
\mbox{for a.a. $x\in \Omega$}.
$$
\end{description}
\end{prop}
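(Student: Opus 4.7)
The plan is to obtain item 1 from linear maximal parabolic regularity and item 2 from a comparison argument against spatially constant super- and sub-solutions. For item 1, I would recast the regularized continuity equation (\ref{eq3.2}) as the linear inhomogeneous parabolic problem
\begin{equation*}
\partial_t \vr - \varepsilon \Delta \vr + \vu \cdot \Grad \vr + (\Div \vu)\,\vr = 0,
\end{equation*}
complemented with the homogeneous Neumann condition for $\vr$ on $\partial\Omega$ and the datum $\vr_0\in W^{1,2}(\Omega)$. Since $\vu\in L^\infty(I;W^{1,\infty}(\Omega))$, the coefficients $\vu$ and $\Div\vu$ belong to $L^\infty(Q_T)$ and act as lower-order perturbations of the Neumann realization of $-\varepsilon\Delta$ on $L^2(\Omega)$, which generates a bounded analytic semigroup. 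The Denk--Hieber--Pr\"uss maximal regularity theorem then yields, for every such $\vr_0$, a unique solution in the class (\ref{parabolicr}). Uniqueness can alternatively be obtained directly by testing the equation for the difference of two solutions by the difference itself and invoking Gronwall's inequality.

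For item 2, I would introduce the time-dependent barriers
\begin{equation*}
\mathcal{E}(t):=\int_0^t \|\Div\vu(s,\cdot)\|_{L^\infty(\Omega)}\ds, \qquad w_+(t):=\overline\vr\,e^{\mathcal{E}(t)}, \qquad w_-(t):=\underline\vr\,e^{-\mathcal{E}(t)}.
\end{equation*}
Being spatially constant, $w_\pm$ trivially satisfy the Neumann condition and obey $\Delta w_\pm=0$, $\vu\cdot\Grad w_\pm=0$; the elementary bounds $-\|\Div\vu(t,\cdot)\|_{L^\infty}\le\Div\vu(t,x)\le\|\Div\vu(t,\cdot)\|_{L^\infty}$ give pointwise in $Q_T$ the sub/super-solution inequalities
\begin{equation*}
\partial_t w_+ + (\Div\vu)\,w_+\ge 0,\qquad \partial_t w_- + (\Div\vu)\,w_-\le 0.
\end{equation*}
Subtracting these from the equation for $\vr$, testing the inequality for $\vr-w_+$ by $(\vr-w_+)^+$, and using the no-slip condition $\vu|_{\partial\Omega}=\vc 0$ to integrate the convective term by parts yields
\begin{equation*}
\frac 12 \Dt \|(\vr-w_+)^+\|_{L^2(\Omega)}^2 + \varepsilon \|\Grad(\vr-w_+)^+\|_{L^2(\Omega)}^2 \le \frac 12 \|\Div\vu(t,\cdot)\|_{L^\infty(\Omega)}\,\|(\vr-w_+)^+\|_{L^2(\Omega)}^2,
\end{equation*}
and an analogous inequality for $(w_--\vr)^+$. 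Since the assumption $\vr_0\in[\underline\vr,\overline\vr]$ a.e. forces $(\vr(0)-w_+(0))^+=(w_-(0)-\vr(0))^+=0$, Gronwall's lemma enforces $w_-(t)\le\vr(t,\cdot)\le w_+(t)$ a.e. in $\Omega$ for every $t\in\overline I$. Taking $\overline c=\overline\vr\,e^{\mathcal{E}(\tau)}$ and $\underline c=\underline\vr\,e^{-\mathcal{E}(\tau)}$ produces the claimed bounds, which depend only on $\tau$, $\underline\vr$, $\overline\vr$ and $\|\Div\vu\|_{L^1(I;L^\infty(\Omega))}\ge \mathcal{E}(\tau)$.

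The only delicate step is verifying the admissibility of the test functions $(\vr-w_\pm)^\pm$ in the weak formulation: this requires $\vr\in L^2(I;W^{1,2}(\Omega))\cap W^{1,2}(I;(W^{1,2}(\Omega))^*)$, which is provided by the regularity class (\ref{parabolicr}) established in the first step. Once this is known, the energy identity for $\vr-w_\pm$ and Gronwall's inequality combine into a standard argument, so no serious obstacle arises beyond correctly connecting the abstract parabolic regularity of item 1 with the concrete comparison principle of item 2.
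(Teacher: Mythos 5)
Your proposal is correct and follows essentially the same route as the paper, which gives no detailed proof but explicitly attributes the result to maximal parabolic regularity (Denk--Hieber--Pr\"uss) for item 1 and a comparison principle (as in Novotn\'y--Stra\v{s}kraba, Proposition 7.39) for item 2 --- precisely the two ingredients you supply, with the standard spatially constant barriers $\underline\vr e^{-\mathcal E(t)}$, $\overline\vr e^{\mathcal E(t)}$ and the $(\vr-w_\pm)^\pm$ energy/Gronwall argument. The details you give (sub/supersolution inequalities, integration by parts of the convective term using $\vu|_{\partial\Omega}=\mathbf 0$, admissibility of the truncated test functions via the regularity class (\ref{parabolicr})) are all sound.
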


\begin{prop} \label{p3}
Let $P,G$: $(I\times\Omega)\times [0,\infty)\in R$ be a couple of functions such that
for almost all $(t,x)\in I\times\Omega$,  $\vr\mapsto P(t,x,\vr)$ and $\vr\mapsto G(t,x,\vr)$
are both non decreasing and continuous on $[0,\infty)$. Assume that  $\vr_n\in
L^1(Q_T)$ is a sequence such that
$$
\left.\begin{array}{c}
P(\cdot,\vr_n(\cdot)) \rightharpoonup \overline{P(\cdot,\vr)}, \\
G(\cdot,\vr_n(\cdot)) \rightharpoonup \overline{G(\cdot,\vr)}, \\
P(\cdot,\vr_n(\cdot))G(\cdot,\vr_n(\cdot)) \rightharpoonup \overline{P(\cdot,\vr)G(\cdot,\vr)}
\end{array} \right\} \mbox{ in } L^1(I\times\Omega).
$$
Then
$$
\overline{P(\cdot,\vr)}\, \, \overline{G(\cdot,\vr)} \leq
\overline{P(\cdot,\vr)G(\cdot,\vr)}
$$
a.a. in $I\times \Omega$.
\end{prop}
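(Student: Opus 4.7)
The plan is to combine the fundamental theorem on Young measures with the elementary correlation inequality valid for two non-decreasing functions integrated against a common probability measure.

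First I would pass to a subsequence (not relabeled) along which $\vr_n$ generates a Young measure $\{\nu_{t,x}\}_{(t,x)\in I\times\Omega}$ on $[0,\infty]$, i.e.\ a parameterized family of probability measures. Since each of the sequences $P(\cdot,\vr_n)$, $G(\cdot,\vr_n)$ and $P(\cdot,\vr_n)G(\cdot,\vr_n)$ converges weakly in $L^1(I\times\Omega)$, the Dunford--Pettis theorem yields their equiintegrability. The fundamental theorem on Young measures, applied with the Carath\'eodory integrands $P$, $G$ and $PG$, then represents all three weak limits simultaneously: for a.a.\ $(t,x)$,
\begin{equation*}
\begin{aligned}
\overline{P(\cdot,\vr)}(t,x) &= \int P(t,x,\lambda)\,d\nu_{t,x}(\lambda),\\
\overline{G(\cdot,\vr)}(t,x) &= \int G(t,x,\lambda)\,d\nu_{t,x}(\lambda),\\
\overline{P(\cdot,\vr)G(\cdot,\vr)}(t,x) &= \int P(t,x,\lambda)G(t,x,\lambda)\,d\nu_{t,x}(\lambda).
\end{aligned}
\end{equation*}

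Second I would exploit the monotonicity of $\lambda\mapsto P(t,x,\lambda)$ and $\lambda\mapsto G(t,x,\lambda)$. Starting from the pointwise inequality
\begin{equation*}
\bigl(P(t,x,\lambda)-P(t,x,\mu)\bigr)\bigl(G(t,x,\lambda)-G(t,x,\mu)\bigr)\ge 0,
\end{equation*}
valid for all $\lambda,\mu\ge 0$ and a.a.\ $(t,x)$, I would integrate against the product probability measure $d\nu_{t,x}(\lambda)\otimes d\nu_{t,x}(\mu)$ on $[0,\infty]^2$. Expanding by Fubini and using the three identities above yields
\begin{equation*}
2\,\overline{P(\cdot,\vr)G(\cdot,\vr)}(t,x)-2\,\overline{P(\cdot,\vr)}(t,x)\,\overline{G(\cdot,\vr)}(t,x)\ge 0,
\end{equation*}
which is exactly the claimed inequality.

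The main technical point will be to justify the Young-measure representation of the (generally unbounded) product $PG$: one must check that no portion of the integrand escapes to infinity in the limit. This is precisely handled by the equiintegrability of $P(\cdot,\vr_n)G(\cdot,\vr_n)$ in $L^1$; concretely, truncating $PG$ at height $k$, applying the Young-measure representation to the bounded truncation, and letting $k\to\infty$ by equiintegrability does the job. An alternative, more elementary route avoids Young measures: starting from $\bigl(P(\cdot,\vr_n)-P(\cdot,k)\bigr)\bigl(G(\cdot,\vr_n)-G(\cdot,k)\bigr)\ge 0$ for a constant $k\ge 0$, multiplying by a non-negative $\psi\in C_c^\infty(I\times\Omega)$, passing $n\to\infty$ with the help of the three given weak $L^1$ limits (after truncating the measurable coefficients $P(\cdot,k)$, $G(\cdot,k)$ so that they multiply $\psi$ into $L^\infty$), and finally letting $k\to\infty$ recovers the same conclusion; the Young-measure argument is however shorter and avoids the double limit.
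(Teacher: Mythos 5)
Your Young-measure argument is correct and is essentially the standard proof of the result the paper invokes: the authors give no proof of their own here, deferring to \cite[Theorem 10.19]{FeNoB}, whose argument is exactly the representation of the three weak $L^1$ limits by a common parametrized measure followed by integration of $(P(t,x,\lambda)-P(t,x,\mu))(G(t,x,\lambda)-G(t,x,\mu))\ge 0$ against $\nu_{t,x}\otimes\nu_{t,x}$. One small caution on the alternative elementary route you sketch: with a constant $k$ the final step is not ``letting $k\to\infty$'' but rather observing that the limit inequality holds a.e.\ simultaneously for all rational $k$ and then choosing $k=k(t,x)$ pointwise so that $P(t,x,k)$ approaches $\overline{P(\cdot,\vr)}(t,x)$, which is possible because the weak limit lies a.e.\ between $P(t,x,0)$ and $\sup_\lambda P(t,x,\lambda)$.
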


%
\begin{proof}
The proof is a straightforward modification of a similar result from \cite[Theorem 10.19]{FeNoB}.
\end{proof}

Finally we report  a convenient version of the celebrated Div-Curl lemma, see \cite[Section 6]{EF70} or \cite[Theorem 10.27]{FeNoB}. To this end we denote
by ${\tn R}$ the Riesz transform as a pseudodifferential operator
with the Fourier symbol $\frac {\xi\otimes\xi}{|\xi|^2}$. The statement reads.

\begin{prop}\label{rieszcom} 
Let
\[
\vc{ V}_\ep \to \vc{ V} \ \mbox{weakly in}\ L^p(R^3; R^3),
\]
\[
\vc{ U}_\ep \to \vc{ U} \ \mbox{weakly in}\ L^q(R^3; R^3),
\]
where $ \frac{1}{p} + \frac{1}{q} = \frac{1}{r} < 1$. Then
\[
\vc{ U}_\ep \cdot ({\tn R}\cdot[\vc{V}_\ep ]) - ({\tn R}\cdot[\vc{
U}_\ep])\cdot \vc{ V}_\ep \to \vc{ U}\cdot ({\tn R}\cdot[\vc{ V}]) -
({\tn R}\cdot[\vc{ U}])\cdot \vc{ V} \ \mbox{weakly in}\ L^r(R^3).
\]
In the above, $({\tn R}\cdot[\vc{ U}])_i=\sum_{j=1}^3\tn R_{ij}[U_j].$
\end{prop}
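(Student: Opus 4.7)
The plan is to reduce the claim to the classical Div--Curl lemma of Murat--Tartar, exploiting the fact that $\tn R\cdot[\cdot]$ singles out the gradient part of the Helmholtz decomposition. Indeed, inspecting the Fourier symbol one sees that $\tn R\cdot[\vc F]=\nabla\Delta^{-1}\Div\vc F$ is always a gradient, and by the $L^p$-continuity of the Riesz transform for $1<p<\infty$ one has $\|\tn R\cdot[\vc F]\|_{L^p}\le C\|\vc F\|_{L^p}$, so all quantities introduced below inherit the required uniform bounds.

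First I would set
\[
\phi_\ep:=\Delta^{-1}\Div\vc V_\ep,\qquad \psi_\ep:=\Delta^{-1}\Div\vc U_\ep,
\]
so that $\tn R\cdot[\vc V_\ep]=\nabla\phi_\ep$ and $\tn R\cdot[\vc U_\ep]=\nabla\psi_\ep$, and introduce the solenoidal remainders
\[
\vc W_\ep:=\vc V_\ep-\nabla\phi_\ep,\qquad \vc Z_\ep:=\vc U_\ep-\nabla\psi_\ep,
\]
which satisfy $\Div\vc W_\ep=0$ and $\Div\vc Z_\ep=0$ by construction. The Riesz bound gives $\nabla\phi_\ep,\vc W_\ep$ bounded in $L^p(R^3;R^3)$ and $\nabla\psi_\ep,\vc Z_\ep$ bounded in $L^q(R^3;R^3)$, each converging weakly to the analogous object built from the limits $\vc V$, $\vc U$.

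A direct expansion then yields the algebraic cancellation
\[
\vc U_\ep\cdot\tn R\cdot[\vc V_\ep]-\tn R\cdot[\vc U_\ep]\cdot\vc V_\ep=(\nabla\psi_\ep+\vc Z_\ep)\cdot\nabla\phi_\ep-\nabla\psi_\ep\cdot(\nabla\phi_\ep+\vc W_\ep)=\vc Z_\ep\cdot\nabla\phi_\ep-\nabla\psi_\ep\cdot\vc W_\ep,
\]
the mixed gradient--gradient terms disappearing. Each product on the right pairs a divergence-free field with a curl-free (gradient) field, both weakly convergent and with Hölder-matching exponents $1/p+1/q=1/r<1$. The classical Div--Curl lemma therefore gives
\[
\vc Z_\ep\cdot\nabla\phi_\ep\rightharpoonup\vc Z\cdot\nabla\phi,\qquad \nabla\psi_\ep\cdot\vc W_\ep\rightharpoonup\nabla\psi\cdot\vc W\quad\text{weakly in }L^r(R^3),
\]
where the uniform $L^r$ bound provided by Hölder's inequality upgrades the distributional convergence produced by the lemma to weak convergence in $L^r$. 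Summing and reversing the decomposition via $\vc Z\cdot\nabla\phi-\nabla\psi\cdot\vc W=\vc U\cdot\nabla\phi-\nabla\psi\cdot\vc V=\vc U\cdot(\tn R\cdot[\vc V])-(\tn R\cdot[\vc U])\cdot\vc V$ yields the announced limit.

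The main point is the appeal to the Div--Curl lemma, but in the present setting its hypotheses become trivial: the divergences of $\vc Z_\ep,\vc W_\ep$ and the curls of $\nabla\phi_\ep,\nabla\psi_\ep$ vanish identically, not merely in a compact subset of some negative Sobolev space, so no compensated-compactness subtleties arise. The only substantive analytic ingredient is the $L^p$-boundedness of the Riesz transform for $1<p<\infty$, which delivers both the uniform bounds and the existence of the weak limits of $\nabla\phi_\ep$, $\nabla\psi_\ep$ needed to carry out the reduction.
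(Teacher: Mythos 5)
The paper states Proposition \ref{rieszcom} without proof, referring to \cite[Section 6]{EF70} and \cite[Theorem 10.27]{FeNoB}; your argument is correct and is essentially the canonical proof given in those references: identify $\tn R\cdot[\vc F]=\nabla\Delta^{-1}{\rm div}\,\vc F$, split each field into its gradient and solenoidal parts so that the gradient--gradient contributions cancel, and apply the $L^p$--$L^q$ Div--Curl lemma to the two remaining products, the uniform H\"older bound in $L^r$ with $r>1$ upgrading the distributional convergence to weak convergence in $L^r(R^3)$. No gaps.
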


\section{The model bi-fluid system: Proof of Theorem \ref{t1}}\label{se2+}
\subsection{Approximate system} \label{se3}

We introduce a three level approximation which is similar to the standard procedure applied for the existence proof to the evolutionary compressible Navier--Stokes equation as suggested in \cite{FNP} and modified in \cite{3MNPZ} in order to accommodate several species. Since we deal with generally non-monotone pressure, we combine this approach with the ideas from \cite{Fe2002}. Finally, other particular modifications of the standard approach are needed due to  the fact that we deal with two-densities pressure.

We may suppose,  without loss of generality, that
\begin{equation}\label{d0}
P\in C^2((0,\infty)^2)
\end{equation}
and that $\partial^2_Z P$ and $\partial_\vr P$ verify bound (\ref{eq2.3a-}).
If this is not the case, we would be obliged to replace, in what follows,
function $P$ by a regularized function $[P]_\xi$ such that
\begin{equation}\label{chi}
[P]_{\xi}(\vr,Z)=\int_{R^3}P(\vr-\sigma,Z-z)\chi_\xi(\sigma)\chi_\xi(z)\,{\rm d} \sigma\,{\rm d}z,
\end{equation}
where $\chi_\xi$, $\xi>0$ is the standard even one dimensional mollifier supported in $(-\xi,\xi)$. This would augment  by one the number of parameters in the approximate system and would require to insert to the construction process between the passage $\ep\to 0$ and before the passage $\delta\to 0$ the limit $\xi\to 0$, which would use the same argumentation as the passage $\delta\to 0$, cf. equations (\ref{eq3.2}--\ref{eq3.4}).

We first take $\delta >0$ and a sufficiently large $B\gg  {\rm max}\{9/2,\gamma,\beta, A\}$.  Further, we take $\eta_\delta(x) = \eta(x/\delta)$ a smooth cut-off function such that $\eta\in C^\infty([0,\infty))$,
\begin{equation}\label{eta}
\begin{aligned}
\eta(z) &= \left\{
\begin{array}{rl}
1 \quad &\text { for } 0\leq z\leq 1/2 \\
0 \quad &\text { for } 1<z \\
\in (0,1) &\text { for } 1/2<z<1
\end{array}
\right\},\\
0&\le -\eta'(z)\le 2\;\mbox{ for all $z$}.
\end{aligned}
\end{equation}
Then we define
\begin{equation} \label{eq3.1}
\Pi_\delta(\vr,Z) = P_\delta (\vr,Z) + \delta \Big(\vr^B + Z^B + \frac 12 \vr^2 Z^{B-2} + \frac 12 Z^2 \vr^{B-2}\Big),
\end{equation}
where
$$
P_\delta (\vr,Z)=\big(1-\eta_\delta(\sqrt{\vr^2+Z^2})\big) P(\vr, Z). 
$$
We notice, in particular, that the regularized $P_\delta$ conserves all Hypotheses
(H3--H5), namely (\ref{eq2.2}), (\ref{eq2.5-}), (\ref{eq2.4}) and a fortiori (\ref{eq2.3}), (\ref{eq2.5}) with $C$ still independent of $\delta$. The coefficient $C$ in assumption (\ref{eq2.3a-}) and a fortiori in assumption (\ref{eq2.3a}) may however explode as $\delta\to 0$. Furthermore, the function $(\vr,Z)\mapsto \vr^B + Z^B + \frac 12 \vr^2 Z^{B-2} + \frac 12 Z^2 \vr^{B-2}$ is convex in $[0,\infty)^2$. 

We can also suppose without loss of generality that initial conditions are regular enough with densities $(\vr_0, Z_0)$ out of vacuum, namely
\begin{equation}\label{dinit}
0<\vr_0<\overline a Z_0,\; (\vr_0,Z_0)\in C^3(\overline\Omega),\;(\partial_{\vc n}\vr_0,\partial_{\vc n} Z_0)|_{\partial\Omega}=(0,0)
\end{equation}
$$
\vu_0\in C^3(\overline\Omega;R^3)\cap W_0^{1,2}(\Omega;R^3).
$$
If this is not the case, and they verify only Hypotheses (H1--H2), we can mollify them using parameter $\delta$ to $( \vr_{0}^\delta, Z_0^\delta, \vu_0^\delta)$ belonging to (\ref{dinit}) in such a way that they converge to $( \vr_{0}, Z_0, \vu_0)$ in the spaces (\ref{eq2.6}) of initial conditions, as well as $(Z_0^\delta)^2/\vr_0^\delta\to
\vr_0s_0^2$, $(\vr_0^\delta+Z_0^\delta)|\vu_0^\delta|^2\to (\vr_0+Z_0)|\vu_0|^2$ in $L^1(\Omega)$, see \cite{3MNPZ} for more details.


Next step consists in the parabolic regularization of the continuity equations by adding $\varepsilon \Delta \vr$ and $\varepsilon \Delta Z$ to the right-hand side of the continuity equations for densities $\vr$, $Z$ and endowing the new equations  with the homogeneous Neumann boundary conditions. This regularization is compensated by adding the term $\varepsilon \Grad (\vr+Z) \cdot \Grad \vu$ to the left-hand side of the momentum equation in order to keep in force the energy identity.

Finally we take $\{\vcg{\Phi^j}\}_{j=1}^\infty\subset C^2(\overline\Omega;R^3))\cap W^{1,2}_0(\Omega;R^3)$ an orthonormal basis in $L^2(\Omega;R^3))$  (formed e.g. by eigenfunctions of the Lam\'e system with homogeneous Dirichlet boundary conditions) and consider for  a fixed $N \in \N$ an orthogonal projection of the momentum equation onto the linear hull ${\rm LIN}\{\vcg{\Phi^j}\}_{j=1}^N$. 

To summarize, our approximation looks as follows:

\begin{df} \label{d2}
The triple\footnote{We skip the indices $N$, $\varepsilon$ and $\delta$ in what follows and will use (only one of them) in situations when it will be useful to underline the corresponding limit passage.}  ($\vr^{N,\varepsilon, \delta}, Z^{N,\varepsilon, \delta}, \vu^{N,\varepsilon, \delta}) = (\vr,Z,\vu)$ is a solution to our approximate problem, provided $\partial_t \vr$, $\partial_t Z$, $\Grad^2 \vr$ and $\Grad^2 Z \in L^r(I\times \Omega)$ for some $r\in (1,\infty)$, $\vu(t,x) =\sum_{j=1}^N c_j^N(t) \vcg{\Phi}_j(x)$ with $c_j^N \in C^1(0,T) \cap C([0,T])$ for $j=1,2,\dotsm, N$, the regularized continuity equation problems
\begin{equation} \label{eq3.2}
\begin{aligned}
\partial_t \vr + \Div(\vr\vu) &= \varepsilon \Delta \vr \\
\pder{\vr}{\vc{n}}\Big|_{\partial \Omega} &= 0 \\
\vr(0,x) &= \vr_0, 
\end{aligned}
\end{equation} 
and  
\begin{equation} \label{eq3.3}
\begin{aligned}
\partial_t Z + \Div(Z\vu) &= \varepsilon \Delta Z \\
\pder{Z}{\vc{n}}\Big|_{\partial \Omega} &= 0 \\
Z(0,x) &= Z_0
\end{aligned}
\end{equation}	
hold in the a.a. sense, and the Galerkin approximation for the momentum equation
\begin{equation} \label{eq3.4}
\begin{aligned}
&\int_0^T \int_\Omega \Big(\partial_t \big((\vr+ Z)\vu\big)\vcg{\varphi} -(\vr+Z)(\vu\otimes\vu):\Grad \vcg{\varphi} - \Pi_\delta(\vr,Z)\Div \vcg{\varphi}\Big)\dx \dt \\
=& \int_0^T \int_\Omega \Big(\mu \Grad \vu:\Grad \vcg{\varphi} + (\mu+\lambda) \Div \vu \, \Div \vcg{\varphi} - \varepsilon \big(\Grad (\vr+Z)\cdot \Grad \vu\big)\cdot \vcg{\varphi}\Big) \dx\dt  
\end{aligned}
\end{equation}
holds for any $\vcg{\varphi} \in {\rm {LIN}}\{\vcg{\Phi}\}_{j=1}^N$, and 
\begin{equation} \label{eq3.5}
\vu(0,x) = {\mathcal P}_N(\vu_0)
\end{equation}
 with ${\mathcal P}_N$ the orthogonal projection onto ${\rm {LIN}}\{\vcg{\Phi}\}_{j=1}^N$ in $L^2(\Omega;R^3)$.
\end{df}


\subsection{Galerkin approximation} \label{se5}

We are now prepared to study existence of solution on the level of the Galerkin approximation, as specified in Definition \ref{d1}. We shall only prove a priori estimates independent of $N$, as the rest is more or less similar to the proof for the compressible (mono-fluid) Navier--Stokes equations. The local in time existence is proved by means of the local in time existence theory for ordinary differential equation (existence  for the Galerkin approximation for the velocity) combined with the existence theory for linear parabolic equations (continuity equations). These solutions are put together via a version of the Schauder fixed point theorem and due to estimates proved below this solution can be extended on the whole time interval $(0,T)$. The details can be found in \cite[Chapter 7]{NoSt}.

Let us concentrate now on the estimates independent of the parameter $N$. 

First, applying Proposition \ref{p3} to the both regularized continuity equations (\ref{eq3.2}--\ref{eq3.3})$_{\vr^N,Z^N,\vu^N}$ and to the regularized continuity equation satisfied by the  differences $\overline a\vr^N-Z^N$ and $Z^N-\underline{a} \vr^N$, we easily see that for all $t \in \overline{I}$ and $x \in \Omega$
\begin{equation} \label{eq5.1} 
\begin{aligned}
\vr^N(t,x)&\geq C_1(\delta, N)>0, \quad Z^N(t,x) \geq C_1(\delta, N)>0, \\
\underline a \vr^N(t,x) &\leq Z^N(t,x) \leq \overline a \vr^N(t,x) \leq C_2(\delta,N)<\infty.
\end{aligned}
\end{equation}

Next, we multiply equation \eqref{eq3.2}$_{\vr^N,\vu^N}$ by $\vr^N$, equation \eqref{eq3.3}$_{Z^N,\vu^N}$ by $Z^N$ and integrate over $\Omega$. We end up with
\begin{multline} \label{eq5.2}
\frac 12 \frac{{\rm d}}{{\rm d}t}\|\vr^N,Z^N\|^2_{L^2(\Omega)} + \varepsilon \|\Grad \vr^N,\Grad Z^N\|_{L^2(\Omega)}^2 \\ = -\frac 12 \int_\Omega \big((\vr^N)^2+ (Z^N)^2\big)\Div \vu^N\dx.
\end{multline} 
Finally, we use in the projected momentum equation (\ref{eq3.4})$_{\vr^N,Z^N\vu^N}$ as test function the solution $\vu^N$ itself, which is certainly possible. Using also the continuity equations we end up with
\begin{equation} \label{eq5.3}
\frac 12 \frac{{\rm d}}{{\rm d}t} \big\|(\vr^N + Z^N)|\vu^N|^2\big\|_{L^1(\Omega)} + \mu \|\Grad \vu^N\|_{L^2(\Omega)}^2 
\end{equation}
$$
+ (\mu+\lambda)\|\Div \vu^N\|_{L^2(\Omega)}^2 = \int_\Omega \Pi_\delta(\vr^N,Z^N) \Div \vu^N \dx.
$$
We shall now express the integral containing $ \Pi_\delta(\vr^N,Z^N) \Div \vu^N$. To this end we introduce the Helmholtz energy function 
${\mathcal H}_\delta$ as a convenient solution of the first order differential equation (\ref{HPODE}), namely 
\begin{equation}\label{eq5.4}
{\mathcal H}_\delta(\vr,Z)= H_{P_\delta}(\vr,Z)  + h_\delta (\vr, Z),
\end{equation}
where  $h_\delta(\vr,Z)= \frac \delta{B-1}(\vr^B+Z^B +\frac 12 \vr^2 Z^{B-2} + \frac 12 Z^2 \vr^{B-2})$,
and $H_{P_\delta}$ is defined in (\ref{HP}).



Finally, we multiply equation (\ref{eq3.2})$_{\vr^N,\vu_N}$  by $\partial_\vr{\mathcal H}_\delta (\vr^N,Z^N)$ and equation (\ref{eq3.3})$_{Z^N\vu^N}$ by $\partial_Z{\mathcal H}_\delta (\vr^N,Z^N)$,
add together and integrate over $\Omega$ in order to deduce
\begin{equation} \label{eq5.5}
\int_\Omega \Pi_{\delta}(\vr^N, Z^N)\Div \vu^N \dx = -  \frac{{\rm d}}{{\rm d}t}\int_\Omega {\mathcal H}_\delta (\vr^N,Z^N) \dx 
\end{equation}
$$
 -\varepsilon \int_\Omega \Big(\frac{\partial^2 {\mathcal H_\delta}}{\partial \vr^2}(\vr^N,Z^N)|\Grad \vr^N|^2
 + 2\frac{\partial^2 {\mathcal H_\delta}}{\partial \vr \partial Z}(\vr^N,Z^N)\Grad \vr^N \cdot \Grad Z^N
$$
$$
+ \frac{\partial^2 {\mathcal H_\delta}}{\partial Z^2}(\vr^N,Z^N)|\Grad Z^N|^2  \Big)\dx.
$$
Putting together \eqref{eq5.3}, \eqref{eq5.5}  and employing the decomposition (\ref{eq5.4}) we get
\begin{equation} \label{eq5.6}
\begin{aligned} 
 &\frac{{\rm d}}{{\rm d}t} \Big(\frac 12 \Big(\|(\vr^N + Z^N)|\vu^N|^2\|_{L^1(\Omega)} + \|\vr^N,Z^N\|_{L^2(\Omega)}^2 \Big) \\
+& \int_\Omega {\mathcal H}_\delta (\vr^N,Z^N) \dx\Big)\\  
+& \varepsilon \int_\Omega 1_{\{\vr^N+Z^N\ge K\}}(\vr^N,Z^N)\Big(\frac{\partial^2 {H_{P_\delta}}}{\partial \vr^2}(\vr^N,Z^N)|\Grad \vr^N|^2 \\ 
+& 2\frac{\partial^2 {H_{P_\delta}}}{\partial 
\vr \partial Z}(\vr^N,Z^N)\Grad \vr^N \cdot \Grad Z^N+ \frac{\partial^2 {H_{P_\delta}
}}{\partial Z^2}(\vr^N,Z^N)|\Grad Z^N|^2  \Big)\dx \\
+& \varepsilon \int_\Omega 1_{\{\vr^N+Z^N< K\}}(\vr^N,Z^N)\Big(\frac{\partial^2 {H_{P_\delta}}}{\partial \vr^2}(\vr^N,Z^N)|\Grad \vr^N|^2 \\ 
+& 2\frac{\partial^2 {H_{P_\delta}}}{\partial 
\vr \partial Z}(\vr^N,Z^N)\Grad \vr^N \cdot \Grad Z^N+ \frac{\partial^2 {H_{P_\delta}
}}{\partial Z^2}(\vr^N,Z^N)|\Grad Z^N|^2  \Big)\dx \\
+&  \varepsilon\delta B \int_\Omega \Big(\frac{\partial^2 {h_{\delta}}}{\partial \vr^2}(\vr^N,Z^N)|\Grad \vr^N|^2 + 2\frac{\partial^2 {h_{\delta}}}{\partial 
\vr \partial Z}(\vr^N,Z^N)\Grad \vr^N \cdot \Grad Z^N \\
+& \frac{\partial^2 {h_{\delta}
}}{\partial Z^2}(\vr^N,Z^N)|\Grad Z^N|^2  \Big) \dx \\
+&   \int_\Omega \Big(\mu |\Grad \vu^N|^2 + (\mu+\lambda)|\Div \vu^N|^2\Big) \dx = 0.
\end{aligned}
\end{equation}

Recalling observation after (\ref{eq3.1}) and assumption (\ref{eq2.3a-}) (or rather its consequence (\ref{eq2.3a})) together
with the uniform bound $0\le \underline{a}\vr^N \leq Z^N\le \overline a\vr^N$, we see that we may chose
$K=K(\delta)>1$ so large that the integral multiplied by $\ep$ containing the characteristic function $ 1_{\{\vr^N+Z^N\ge K\}}$ (the third and the fourth line) will be "absorbed" by the seventh and the eighth line.
Now, we choose number $\Sigma(\delta)$ so large that the integral multiplied by $\ep$ containing the characteristic function $ 1_{\{\vr^N+Z^N< K\}}$ (the fifth and the sixth line)
will be absorbed in the integral  $\ep \Sigma\int_\Omega|\nabla \vr^N,\nabla Z^N|^2{\rm d}x$. This is possible due to estimate (\ref{eq2.3a}). Therefore, summing up
\eqref{eq5.6} and $\Sigma$-multiple of inequality \eqref{eq5.2}, we end up, after an application of the Gronwall lemma,
\begin{equation} \label{eq5.7}
\|\vr^N,Z^N\|_{L^\infty(I;L^B(\Omega))} 
+ \|(\vr^N + Z^N)|\vu^N|^2\|_{L^\infty(I;L^1(\Omega))} 
\end{equation}
$$
+ \|\vu^N\|_{L^2(I;W^{1,2}(\Omega))} 
+ \varepsilon \Big(\|\Grad \vr^N,\Grad Z^N\|^2_{L^2(I;L^2(\Omega))} 
$$
$$ 
+ \int_0^T\int_\Omega \Big(\big|\nabla(\vr^N)\big|^2+\big|\nabla (Z^N)\big|^2\Big)\Big((\vr^N)^{B-2} + (Z^N)^{B-2}\Big)\dx \dt \Big) \leq C,
$$
where the constant $C$ is independent of $N$ and $\varepsilon$ (but blows up when $\delta \to 0^+$).

With estimate (\ref{eq5.7}) at hand, we may shift in (\ref{eq3.2}) and (\ref{eq3.3}) the nonlinear terms to the right-hand side. We consider them as homogenous Neumann problems 
with right-hand sides $-{\rm div}(\vr^N\vu^N)$ and $-{\rm div}(Z^N\vu^N)$, respectively, and get estimates independent of $N$ for
 \begin{equation}\label{eq5.7+}
\partial_t\vr^N, \partial_t Z^N, \Grad^2\vr^N, \Grad^2 Z^N\;\mbox{in $L^r(I\times\Omega)$ with some $r>1$ },
\end{equation}
$$
\Grad\vr^N, \Grad Z^N \;\mbox{in $L^q(I; L^2(\Omega)$ with some $q>2$ }
$$
via the maximal parabolic regularity, cf. \cite{FNP}.

The estimates evoked above provide enough standard  compactness in order to pass to the limit
$N\to\infty$ in equations (\ref{eq3.2}--\ref{eq3.4}) in the same way as in the mono-fluid compressible Navier--Stokes equations. 
Indeed, we have weak (eventually weak-*) convergence of sequences $(\vr^N, Z^N,$ $\partial_t\vr^N, \partial_t Z^N,$  $\nabla \vr^N, \nabla Z^N,$  $\nabla^2 \vr^N,\nabla^2 Z^N,$ $\vu^N,$ $\nabla\vu^N)$ and $\nabla (\vr^N)^{B/2}, \nabla (Z^N)^{B/2})$, to corresponding weak limits  in the functional spaces, where they are bounded, cf. (\ref{eq5.7}--\ref{eq5.7+}).
Almost everywhere convergence of $(\vr^N,Z^N, \nabla\vr^N,\nabla Z^N)$ 
as well as $C_{\rm weak}([0,T]; L^B(\Omega))$ convergence  of $(\vr^N,Z^N)$ to  $(\vr,Z)$, and $L^\infty(I;L^{\frac{2B}{B+1}}(\Omega))$ convergence  $(\vr^N\vu^N, Z^N\vu^N)$ to $(\vr\vu, Z\vu)$ can be obtained by using classical compactness arguments (Rellich--Kondrachov, Lions--Aubin--Simon and Arzel\`a--Ascoli type lemmas). This is enough to pass to the limit $N\to\infty$ in equations  (\ref{eq3.2}--\ref{eq3.3})$_{\vr^N,Z^N,\vu^N}$ in order to get the regularized continuity equation  (\ref{eq3.2}) and (\ref{eq3.3}) for the limiting functions $(\vr,\vu)$ and (Z,\vu), respectively.

Returning to the momentum equation (\ref{eq3.4})$_{\vr^N,Z^N,\vu^N}$ in order to justify equi-continuity we deduce using an Arzel\`a--Ascoli type argument the $C_{\rm weak}(\overline I; L^{q}(\Omega;R^3))$ convergence of
 $(\vr^N+Z^N)\vu^N$ to $(\vr+Z)\vu$ with a convenient $q>1$ and consequently
together with weak convergence of $\vu^N$ in $L^2(I,W^{1,2}(\Omega;R^3))$ finally
at least the $L^1(Q_T;R^9)$ weak convergence of the convective term to $\vr\vu\otimes\vu$.

Using in the limit equations (\ref{eq3.2}--\ref{eq3.3})$_{\vr,Z,\vu}$ test-functions $\vr$ and $Z$, respectively, we get identity (\ref{eq5.2}) for weak limits $(\vr,Z,\vu)$ (i.e. (\ref{eq5.2}) with
$(\vr, Z, \vu)$ on place of $(\vr^N, Z^N, \vu^N)$). Comparison of (\ref{eq5.2})$_{\vr^N, Z^N, \vu^N}$ with (\ref{eq5.2})$_{\vr, Z, \vu}$ yields strong convergence of $(\nabla \vr^N,\nabla Z^N)$ towards $ (\nabla \vr,\nabla Z)$ in $(L^2(Q_T;R^3))^2$. This is the last element we need to pass to the limit in the momentum equation (\ref{eq3.4}) in order to get
\begin{equation} \label{eq5.10}
\begin{aligned}
&\int_0^T \int_\Omega \big((\vr+Z)\vu \cdot \partial_t \vcg{\varphi} + (\vr+Z) (\vu\otimes \vu): \Grad \vcg{\varphi} + P(\vr,Z) \Div \vcg{\varphi}\big) \dx \dt \\
& - \varepsilon \int_0^T \int_\Omega \big(\Grad (\vr+Z)\cdot \Grad \vu\big)\cdot \vcg{\varphi}) \dx\dt \\
= &\int_0^T\int_\Omega (\mu \Grad \vu :\Grad \vcg{\varphi} + (\mu+\lambda) \Div \vu \, \Div \vcg{\varphi} \big) \dx \dt - \int_\Omega \vc{m}_0 \cdot \vcg{\varphi}(0,\cdot) \dx 
\end{aligned} 
\end{equation} 
for any function $\vcg{\varphi} \in C_c^1([0,T)\times \Omega;R^3)$.

Last but not least, due to the $C_{\rm weak}(\overline I;L^\gamma(\Omega))$ convergence of $(\vr^N,Z^N)$ combined with the theorem on Lebesgue points, we deduce from (\ref{eq5.1}) for all $t\in \overline I$ and a.a. $x \in \Omega$
\begin{equation}\label{eq5.10+}
\vr(t,x)\ge 0,\; \underline a \vr(t,x)\le Z(t,x)\le\overline a \vr(t,x).
\end{equation}

Finally, we can also pass to the limit in the integrated form of the energy
inequality (\ref{eq5.6}) --- note that we cannot prove the strong convergence of several terms and we must use the lower weak semi-continuity of those terms induced by the convex
functionals --- this will be notably the case of functionals $\int_0^T\int_\Omega (\mu |\Grad \vu^N|^2$ $+ (\mu+\lambda)|\Div \vu^N|^2) \dx\,{\rm d}t$, $\int_\Omega((\vr^N)^B+(Z^N)^B + \frac 12(\vr^N)^2 (Z^N)^{B-2} + \frac 12 (Z^N)^2 (\vr^N)^{B-2})\,{\rm d}x$, and also
$\int_0^T\int_\Omega (|\nabla(\vr^N)^{B/2}|^2$ \linebreak $+|\nabla (Z^N)^{B/2}|^2) \dx \dt$  ---
 and we get therefore only an inequality:  
\begin{equation} \label{eq5.11}
\Big(\frac 12\|(\vr + Z)|\vu|^2(t)\|_{L^1(\Omega)} 
+ \int_\Omega {\mathcal H}_\delta (\vr,Z)(t) \dx\Big) 
\end{equation}
$$
+  \int_0^t\int_\Omega \Big(\mu |\Grad \vu|^2 + (\mu+\lambda)|\Div \vu|^2\Big) \dx \,{\rm d}\tau \\
+ \varepsilon \int_0^t\int_\Omega \Big(\frac{\partial^2 {\mathcal H_\delta}}{\partial \vr^2}(\vr,Z)|\Grad \vr|^2 
$$
$$
 + 2\frac{\partial^2 {\mathcal H_\delta}}{\partial \vr \partial Z}(\vr,Z)\Grad \vr \cdot \Grad Z
+ \frac{\partial^2 {\mathcal H_\delta}}{\partial Z^2}(\vr,Z)|\Grad Z|^2  \Big)\dx \,{\rm d}\tau
$$
$$
 \leq \frac 12 \int_\Omega \frac{|\vc{m}_0|^2}{Z_0 + \vr_0} \dx + \int_\Omega \mathcal{H}_\delta (\vr_0,Z_0)\dx
$$
for a.a. $t\in (0,T)$.

\subsection{Limit passage $\varepsilon \to 0^+$} \label{se6}

From the previous section we have at our disposal a solution $(\vr_\ep, Z_\ep, \vu_\ep)$ of system (\ref{eq3.2}--\ref{eq3.3}), (\ref{eq5.10}), obeying energy inequality (\ref{eq5.11}) and belonging to class (\ref{eq5.7+}):
\begin{equation}\label{clep}
\begin{aligned}
(\vr,Z)&\in \big(L^\infty(I; L^B(\Omega))\cap L^r(I,W^{2,r}(\Omega))\cap L^2(I;W^{1,2}(\Omega))\big)^2,\\
 (\partial_t\vr,\partial_t Z)&\in \big(L^r(I\times \Omega)\big)^2, \;
\vu\in L^2(I,W^{1,2}(\Omega;R^3)),
\end{aligned}
\end{equation}
and for all $t\in \overline I$ and a.a. $x \in \Omega$
$$
\underline a \vr\le Z(t,x)\le\overline a\vr(t,x).
$$
Moreover, inequalities (\ref{eq5.7}) and (\ref{eq5.11}) provide, in particular, the following estimates uniform with $\ep$:
	\begin{multline}\label{estep1}
	\|(\vr_\ep, Z_\ep)\|_{L^\infty(I;L^B(\Omega))}+\sqrt\ep\|\nabla\vr_\ep,\nabla Z_\ep\|_{L^2(Q_T)}\\
	+ \|\vu_\ep\|_{L^2(I;W^{1,2}(\Omega))}\le C,
	\end{multline}
	\begin{equation}\label{estep2}
	\|\vr_\ep|\vu_\ep|^2\|_{L^\infty(I;L^1(\Omega))}\le C.
	\end{equation}
This is enough to multiply equation (\ref{eq3.2})$_{\vr_\ep,\vu_\ep}$ by $\partial_\vr b(\vr_\ep,Z_\ep)$ and equation (\ref{eq3.3})$_{Z_\ep,\vu_\ep}$  by $\partial_Z b(\vr_\ep,Z_\ep)$, where $b(\vr,Z)= b_\delta(\vr,Z)= \frac {Z^2}{\vr+\delta}$, $\delta>0$, to arrive at the identity
\begin{equation}\label{Z2r-}
\partial_t b_\delta(\vr_\ep,Z_\ep)+{\rm div}\Big(b_\delta(\vr_\ep,Z_\ep)\vu_\ep\Big)+
\delta\frac{Z_\ep^2}{(\vr_\ep+\delta)^2}{\rm div}\,\vu_\ep
\end{equation}
$$
+ \frac{\partial^2 { b_\delta}}{\partial \vr^2}(\vr_\ep,Z_\ep)|\Grad \vr_\ep|^2  + 2\frac{\partial^2 { b_\delta}}{\partial \vr \partial Z}(\vr_\ep,Z_\ep)\Grad \vr_\ep \cdot \Grad Z_\ep+ \frac{\partial^2 {b_\delta}}{\partial Z^2}(\vr_\ep,Z_\ep)|\Grad Z_\ep|^2 =0
$$
a.a. in $Q_T$. Integrating this identity over $(0,\tau)\times\Omega$ while employing the fact that $b_\delta$ is a convex function, we obtain for all $\tau\in \overline I$
$$
\int_\Omega\frac {Z_\ep^2}{\vr_\ep+\delta}(\tau,x)\,{\rm d}x+\delta\int_0^\tau\int_{\Omega} \frac{Z_\ep^2}{(\vr_\ep+\delta)^2}{\rm div}\,\vu_\ep\,{\rm d}x\le\int_\Omega\frac  {Z_0^2}{\vr_0+\delta}\,{\rm d}x.
$$
Now, we can use the Lebesgue dominated convergence theorem in the limit $\delta\to 0$
(see the uniform estimate in the second line of (\ref{clep})) to get
\begin{equation}\label{Z2r}
\mbox{for all $\tau\in \overline I$},\;
\int_\Omega Z_\ep s_\ep(\tau,x)\,{\rm d}x\le\int_\Omega  \vr_0
s_0^2\,{\rm d}x,
\end{equation}
where $s_\ep=Z_\ep/\vr_\ep$, $s_0=Z_0/\vr_0$  is defined as in (\ref{sn}), according to convention (\ref{conv}).

	We are now in position to announce a counterpart of Proposition \ref{L2} for
	regularized versions (\ref{eq3.2}--\ref{eq3.3})$_{\vr_\ep, Z_\ep,\vu_\ep}$
	of continuity equations. Its proof is essentially the same  as the proof 
	of Proposition \ref{L2} and will be therefore skipped.

\begin{prop}\label{p4}
Let $\vr_\ep$, $Z_\ep$ and $\vu_\ep$ from the class  (\ref{clep}) solve equations (\ref{eq3.2}--\ref{eq3.3})$_{\vr_\ep, Z_\ep,\vu_\ep}$. Suppose further that they satisfy bounds
(\ref{Z2r}) and (\ref{estep1}).
Then we have:
\begin{description}
\item{\it 1.} Up to a subsequence (note relabeled),
\begin{equation} \label{eq3.7}
\begin{aligned}
(\vr_\ep, Z_\ep) &\rightharpoonup_* (\vr, Z)\;\mbox{in $L^\infty(I;L^B(\Omega;R^2))$}, \\
\vu_\ep &\rightharpoonup \vu \; \text{ weakly in } L^2(I; W^{1,2}(\Omega;R^3)),
\end{aligned}
\end{equation}
where $(\vr,Z)\in C(\overline I,L^1(\Omega))$ and each couple $(\vr,\vu)$ and $(Z,\vu)$ solve continuity equation (\ref{ce1}).
\item{\it 2.} There holds
\begin{equation} \label{eq3.8}
\mbox{for all $\tau\in\overline I$}\; \int_\Omega \vr_\ep |s_\ep-s|^p(\tau,\cdot) \dx \to 0,\;\mbox{with any $1\le p<\infty$},
\end{equation}
where $s_\ep(t,x)=Z_\ep(t,x)/\vr_\ep(t,x)$ and $s(t,x)=Z/\vr(t,x)$ are defined for any $t\in \overline I$ in agreement with (\ref{conv}).
They are bounded for any $\tau\in \overline I$ by $\underline a$ from below and by $\overline a$
from above for a.a. $x\in \Omega$.
\end{description}
\end{prop}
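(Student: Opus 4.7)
The plan is to adapt, \emph{mutatis mutandis}, the strategy of Proposition \ref{L2}, the only new feature being that at this level the continuity equations are parabolically regularized, so one must check that the extra $\ep\Delta$-terms are harmless in the limit. I would organize the proof around the two items of the statement.

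For item 1, I would extract weak/weak-$*$ limits directly from the uniform bounds (\ref{estep1}): Banach--Alaoglu gives $(\vr_\ep,Z_\ep)\rightharpoonup_*(\vr,Z)$ in $L^\infty(I;L^B(\Omega;R^2))$ and $\vu_\ep\rightharpoonup\vu$ in $L^2(I;W^{1,2}_0(\Omega;R^3))$. From the equations (\ref{eq3.2})--(\ref{eq3.3}) and the bounds (\ref{estep1}), $\partial_t\vr_\ep$ and $\partial_tZ_\ep$ are uniformly bounded in some $L^r(I;W^{-1,r}(\Omega))$ (since both the convective term and $\ep\Delta\vr_\ep=\sqrt\ep\,\mathrm{div}(\sqrt\ep\nabla\vr_\ep)$ belong to this space), so the Aubin--Lions lemma yields strong convergence in $C(\overline I;W^{-1,r}(\Omega))$ and, together with the $L^\infty L^B$-bound and interpolation, convergence in $C_{\mathrm{weak}}(\overline I;L^B(\Omega))$, whence $\vr_\ep\vu_\ep\rightharpoonup\vr\vu$ and $Z_\ep\vu_\ep\rightharpoonup Z\vu$ in $L^1$. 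The parabolic terms vanish in distributions since $\ep\nabla\vr_\ep=\sqrt\ep(\sqrt\ep\nabla\vr_\ep)\to 0$ in $L^2(Q_T;R^3)$; passing to the limit yields the pure continuity equation (\ref{ce1}) for both couples $(\vr,\vu)$ and $(Z,\vu)$. The $C(\overline I;L^1(\Omega))$ regularity then follows from Proposition \ref{p1}, item 2, since $\vr,Z\in L^\infty(I;L^\gamma(\Omega))$ with $\gamma>1$ and $\vu\in L^2(I;W^{1,2}_0)$, and the couples satisfy (\ref{ce1}).

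For item 2, the pointwise bounds $\underline a\leq s_\ep\leq \overline a$ come directly from (\ref{eq5.1}) and the construction, while $\underline a\leq s\leq\overline a$ a.e.\ follows by passing to the limit in $\underline a\vr_\ep\leq Z_\ep\leq\overline a\vr_\ep$ and appealing to Proposition \ref{bsing}, item 1. The decisive step mirrors part 3 of the proof of Proposition \ref{L2}: expand
\[
\int_\Omega \vr_\ep(s_\ep-s)^2(\tau,\cdot)\,\mathrm dx
=\int_\Omega \vr_\ep s_\ep^2(\tau,\cdot)\,\mathrm dx
-2\int_\Omega Z_\ep s(\tau,\cdot)\,\mathrm dx
+\int_\Omega \vr_\ep s^2(\tau,\cdot)\,\mathrm dx,
\]
and show all three integrals converge, as $\ep\to 0$, to $\int_\Omega \vr_0 s_0^2\,\mathrm dx$. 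The second uses $Z_\ep\to Z$ in $C_{\mathrm{weak}}(\overline I;L^B)$ together with $s\in C(\overline I;L^q(\Omega))$ from Proposition \ref{bsing} and $Zs=\vr s^2$. The third uses $\vr_\ep\to\vr$ in $C_{\mathrm{weak}}(\overline I;L^B)$ with $s^2\in L^\infty$. For the first integral, (\ref{Z2r}) gives the upper bound $\limsup_\ep\int_\Omega \vr_\ep s_\ep^2(\tau,\cdot)\,\mathrm dx\leq\int_\Omega\vr_0 s_0^2\,\mathrm dx$, while the limiting equality $\int_\Omega\vr s^2(\tau,\cdot)\,\mathrm dx=\int_\Omega\vr_0 s_0^2\,\mathrm dx$ is furnished by Proposition \ref{p2+} applied to $(s,\vu)$ and $(\vr,\vu)$ (since $s$ satisfies the transport equation as a consequence of Proposition \ref{bsing} applied to the limit couple). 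Weak lower semicontinuity of the convex functional $(\varrho,\sigma)\mapsto\int\varrho\sigma^2$ in the variables in which one has convergence gives the matching lower bound, so the first integral converges to $\int_\Omega\vr_0 s_0^2\,\mathrm dx$ as well. Adding the three pieces gives $\int_\Omega\vr_\ep|s_\ep-s|^2(\tau,\cdot)\,\mathrm dx\to 0$, and the result for general $1\leq p<\infty$ follows by interpolation against the $L^\infty$-bounds on $s_\ep,s$.

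The main obstacle is precisely the asymmetry just highlighted: at the regularized level only an inequality is available from (\ref{Z2r}), since the convex corrector produced by $b_\delta$ and the $\ep\Delta$-terms in the identity (\ref{Z2r-}) can only be discarded with a sign; equality must come from the limit system, where $\vr s^2$ solves the pure continuity equation. The bridge is weak lower semicontinuity, which closes the gap between the upper bound at the $\ep$-level and the conserved value at the limit. Everything else is an adaptation of arguments already developed in Propositions \ref{p1}--\ref{L2} and therefore proceeds as in \cite{EF70,NoSt}.
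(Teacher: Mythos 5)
Your proof is correct and follows exactly the route the paper indicates (the authors skip the argument, declaring it ``essentially the same'' as that of Proposition \ref{L2}); items 1 and 2 up to the bounds on $s_\ep,s$ are indeed routine. The one place where the argument is \emph{not} literally the same is the one you single out: since $\vr_\ep s_\ep^2=Z_\ep^2/\vr_\ep$ is no longer conserved along the regularized flow, the identity (\ref{s1!!}) of Proposition \ref{L2} must be replaced by the sandwich ``$\limsup$ from (\ref{Z2r})'' versus ``$\liminf$ from weak lower semicontinuity of the jointly convex perspective functional $(\vr,Z)\mapsto\int_\Omega Z^2/\vr\,\dx$ under the convergence $(\vr_\ep,Z_\ep)(\tau)\rightharpoonup(\vr,Z)(\tau)$ in $L^1(\Omega)$,'' with the pivot value $\int_\Omega\vr_0 s_0^2\dx$ supplied by exact conservation of $\vr s^2$ for the \emph{limit} system via Propositions \ref{bsing} and \ref{p2+}. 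This is precisely why the authors derive (\ref{Z2r}) immediately before stating the proposition and list it among its hypotheses, so your completion is the intended one; the only cosmetic caveat is that the convex functional should be written in the conserved variables $(\vr,Z)$ rather than as $(\vr,\sigma)\mapsto\int\vr\sigma^2$, which is not convex in those coordinates — your parenthetical remark shows you mean the former.
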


We now aim at proving better than $L^1$-bound for the pressure. First, we recall the properties of the Bogovskii operator, see e.g. Galdi \cite{Ga}, or \cite[Appendix]{FeNoB}, or \cite[Chapter 3]{NoSt}. This operator is  a bounded map the space $L^p_0(\Omega) = \{f \in L^p(\Omega)| \int_\Omega f \dx = 0\}$ to $W^{1,p}(\Omega;R^3)$ such that ($\Omega$ must have at least Lipschitz boundary)
\begin{equation}\label{eq6.1}
\Div {\mathcal B}(f) = f, \; \|{\mathcal B}(f)\|_{W^{1,p}(\Omega)} \leq C \|f\|_{L^p(\Omega)},\;
\|{\mathcal B}({\rm div}\,{\vc g})\|_{L^{q}(\Omega)} \leq C \|{\vc g}\|_{L^q(\Omega)}
\end{equation}
for any $1<p<\infty$, $1<q<\infty$, where $\vc g\in L^q(\Omega)$, ${\rm div}\,\vc g\in L^q(\Omega)$ and $\vc g\cdot\vc n|_{\partial\Omega}=0$ in the sense of normal traces. Therefore, using as test function in \eqref{eq5.10} $\psi(t) {\mathcal B}\big(\vr-\frac{1}{|\Omega|}\int_\Omega \vr \dx\big)$, where $\psi \in C_c^1([0,T))$,
$\psi(0)=0$ belongs to a convenient family of non-negative functions,
we end up after standard computations as for the compressible Navier--Stokes equations (here we use \eqref{eq2.2} from Hypothesis (H3))
\begin{equation}  \label{eq6.2}
\int_0^T \int_\Omega \big(\vr^{\gamma+1} + \delta (\vr^{B+1}+ \vr Z^B)\big)\dx \leq C(\delta,\vc{m}_0,\vr_0, Z_0).
\end{equation}
Hence we find 
\begin{equation}\label{eq6.2+}
\|\Pi_\delta(\vr_\varepsilon,Z_\varepsilon)\|_{L^{\frac{B+1}{B}} (I\times \Omega)} \leq C
\end{equation}
with $C$ independent of $\varepsilon$.


 Now, we may let $\varepsilon \to 0^+$ in equations (\ref{eq3.2}--\ref{eq3.3})$_{\vr_\ep, Z_\ep,\vu_\ep}$, (\ref{eq5.1})$_{\vr_\ep, Z_\ep,\vu_\ep}$. We notice that the limit passage in the convective terms can be performed as in the case of the mono-fluid compressible Navier--Stokes equations. Indeed, seeing that 
\begin{equation}\label{conv1}
(\vr_\ep,Z_\ep)\to (\vr,Z) \;\mbox{in $(C_{\rm weak}(\overline I;L^{B}(\Omega)))^2$ }
\end{equation}
(as one can show by means of the  Arzel\`a--Ascoli type argument from equation (\ref{eq3.2}--\ref{eq3.3})$_{\vr_\ep,Z_\ep,\vu_\ep}$ and uniform bounds (\ref{estep1}--\ref{estep2})), we deduce from the compact embedding 
$L^B(\Omega)\hookrightarrow\hookrightarrow W^{-1,2}(\Omega)$ and from $\vu_\ep\rightharpoonup\vu$ in $L^2(I;W^{1,2}(\Omega))$ the weak-* convergence
$(\vr_\ep\vu_\ep,Z_\ep\vu_\ep)\rightharpoonup_* (\vr\vu,Z\vu)$ in $L^\infty(I;L^{\frac{2B}{B+1}}(\Omega))$ that may be consequently improved thanks to momentum equation (\ref{eq5.10})$_{\vr_\ep,Z_\ep,\vu_\ep}$
and estimates (\ref{estep1}--\ref{estep2}), (\ref{eq6.2+}) to 
\begin{equation}\label{conv2}
(\vr_\ep+Z_\ep)\vu_\ep\to(\vr+Z)\vu\;\mbox{ in $C_{\rm weak}(\overline I; L^{\frac{2B}{B+1}}(\Omega;R^3))$}
\end{equation}
 again by the Arzel\`a--Ascoli type argument. 
With this observation at hand,
employing compact embedding  $ L^{\frac{2B}{B+1}}(\Omega)\hookrightarrow\hookrightarrow W^{-1,2}(\Omega)$ and $\vu_\ep\rightharpoonup\vu$ in $L^2(I;W^{1,2}(\Omega;R^3))$ we infer   that
$$
(\vr_\ep+Z_\ep)\vu_\ep\to (\vr+Z)\vu \quad \text{ in } \quad L^2(0,T,W^{-1,2}(\Omega;R^3))
$$ 
and consequently 
\begin{equation}\label{conv3}
(\vr_\ep+Z_\ep)\vu_\ep\otimes\vu_\ep
 \rightharpoonup (\vr+Z)\vu\otimes\vu\;\mbox{weakly e.g. in $L^1(Q_T;R^{9})$,}
\end{equation}
at least for a chosen subsequence (not relabeled). 
This reasoning remains valid under limitation $B>3/2$ imposed by the use of compact embeddings above. Resuming (and realizing that all terms multiplied by $\ep$ will disappear in the limit again by virtue of (\ref{estep1}--\ref{estep2})) we get
\begin{equation} \label{eq6.3}
\begin{aligned}
\int_0^T \int_\Omega \big(\vr \partial_t \psi + \vr \vu \cdot \Grad \psi\big) \dx \dt  + \int_\Omega \vr_0 \psi(0,\cdot) \dx &=0, \\
\int_0^T \int_\Omega \big(Z \partial_t \psi + Z \vu \cdot \Grad \psi\big) \dx \dt + \int_\Omega Z_0 \psi(0,\cdot) \dx &=0
\end{aligned}
\end{equation}
for any $\psi \in C^1_c([0,T) \times \Ov{\Omega})$, 
\begin{equation} \label{eq6.4}
\begin{aligned}
&\int_0^T \int_\Omega \big((\vr+Z)\vu \cdot \partial_t \vcg{\varphi} + (\vr+Z) (\vu\otimes \vu): \Grad \vcg{\varphi} + \overline{\Pi_\delta(\vr,Z)} \Div \vcg{\varphi}\big) \dx \dt \\
= &\int_0^T\int_\Omega (\mu \Grad \vu :\Grad \vcg{\varphi} + (\mu+\lambda) \Div \vu \, \Div \vcg{\varphi} \big) \dx \dt - \int_\Omega \vc{m}_0 \cdot \vcg{\varphi}(0,\cdot) \dx
\end{aligned} 
\end{equation}	
for any $\vcg{\varphi} \in C^\infty_c([0,T)\times \Omega;R^3)$. Above, the bar over $\Pi_\delta(\vr,Z)$ denotes the weak limit of $\Pi_\delta(\vr_\varepsilon, Z_\varepsilon)$ in at least $L^1(Q_T)$. We will use this notation in general for weak limits of sequences (of nonlinear) functions of quantities $(\vr_\ep, Z_\ep)$ or even of  $(\vr_\ep, Z_\ep,\vu_\ep)$ throughout the rest of the paper.

We may also pass to the limit in the energy inequality to get  
\begin{equation} \label{eq6.5}
\begin{aligned} 
&\frac 12 \|(\vr + Z)|\vu|^2(t)\|_{L^1(\Omega)} + \int_\Omega {\mathcal H}_\delta (\vr,Z)(t) \dx
\\ 
&+  \int_0^t\int_\Omega \Big(\mu |\Grad \vu|^2 + (\mu+\lambda)|\Div \vu|^2\Big) \dx \,{\rm d}\tau \\
& \leq \frac 12 \int_\Omega \frac{|\vc{m}_0|^2}{Z_0 + \vr_0} \dx + \int_\Omega \mathcal{H}_\delta (\vr_0,Z_0)\dx.
\end{aligned}
\end{equation}

Now, we may write
$$
\Pi_\delta(\vr_\varepsilon,Z_\varepsilon) = \Pi_\delta(\vr_\varepsilon, \vr_\varepsilon s_\varepsilon) 
= \Pi_\delta (\vr_\varepsilon,\vr_\varepsilon s_\varepsilon) - \Pi_\delta(\vr_\varepsilon, \vr_\varepsilon s) + \Pi_\delta(\vr_\varepsilon,  \vr_\varepsilon s).
$$
We have, due to Proposition \ref{p4} and Hypothesis (\ref{eq2.5-}) (or rather its consequence (\ref{eq2.5})),
\begin{multline*}
\lim_{\varepsilon \to 0} \Big|\int_0^T \int_\Omega \big(\Pi_\delta(\vr_\varepsilon,\vr_\varepsilon s_\varepsilon) - \Pi_\delta(\vr_\varepsilon,\vr_\varepsilon s)\big)\dx\dt\Big|  \\
\leq c(\delta) \lim_{\varepsilon \to 0} \Big(\int_0^T \int_\Omega L_{P}(\vr_\ep)|s_\varepsilon -s| \dx\dt  + \int_0^T \int_\Omega \vr_\epsilon^B |s_\varepsilon^B -s^B| \dx\dt\Big)  = 0.
\end{multline*}
Hence,
\begin{equation} \label{eq6.6}
\overline{\Pi_\delta(\vr,Z) }:=
{w-\lim_{\varepsilon \to 0}} \Pi_\delta(\vr_\varepsilon,Z_\varepsilon) = {w-\lim_{\varepsilon \to 0}} \Pi_\delta (\vr_\varepsilon,\vr_\varepsilon s) =: \Ov{\Ov{\Pi_\delta (\vr,\vr s)}},
\end{equation} 
{where notation ${w-\lim_{\varepsilon \to 0}}$ means weak limit in $L^1(Q_T)$.}
Therefore we may rewrite the momentum equation in the form
\begin{equation} \label{eq6.7}
\begin{aligned}
&\int_0^T \int_\Omega \big((\vr+Z)\vu \cdot \partial_t \vcg{\varphi} + (\vr+Z) (\vu\otimes \vu): \Grad \vcg{\varphi} + \Ov{\Ov{\Pi_\delta (\vr,\vr s)}} \, \Div \vcg{\varphi}\big) \dx \dt \\
= &\int_0^T\int_\Omega (\mu \Grad \vu :\Grad \vcg{\varphi} + (\mu+\lambda) \Div \vu \, \Div \vcg{\varphi} \big) \dx \dt - \int_\Omega \vc{m}_0 \cdot \vcg{\varphi}(0,\cdot) \dx.
\end{aligned} 
\end{equation}
Since $s$ is now fixed, we may apply to the problem the theory available for the
 generally non-monotone pressure of one variable from Feireisl \cite{Fe2002}. 

To this aim, we first recall the {\it effective viscous flux identity} which in our situation has the form
\begin{prop} \label{p5}
We have, possibly for a subsequence $\varepsilon \to 0^+$, the following identity
\begin{equation} \label{eq6.8}
\Ov{\Ov{\Pi_\delta (\vr,\vr s) \vr}} -(2\mu+\lambda) \Ov{\vr \Div \vu} = \Ov{\Ov{\Pi_\delta (\vr,\vr s)}} \vr -(2\mu+\lambda) \vr \, \Div \vu
\end{equation}
fulfilled a.a. in $I\times \Omega$, where $\Ov{\Ov{\Pi_\delta (\vr,\vr s) \vr}} = {w-}\lim_{\varepsilon \to 0} \Pi_\delta(\vr_\varepsilon, \vr_\varepsilon s)\vr_\varepsilon$.
\end{prop}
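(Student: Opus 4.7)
\noindent\textit{Proof plan.}
The strategy is the classical Lions--Feireisl test-function argument, adapted to the fact that the convective momentum carries the combined density $\vr+Z$ while the identity we want to derive is phrased in terms of $\vr$ alone. Concretely, fix $\psi\in C_c^1([0,T))$ and $\zeta\in C_c^\infty(\Omega)$, and use the vector-valued test function
\eqh{
\vcg{\varphi}_\ep(t,x)=\psi(t)\zeta(x)\,\nabla\Delta^{-1}\big[1_\Omega \vr_\ep(t,\cdot)\big](x)
}
in the approximate momentum equation \eqref{eq5.10} for $(\vr_\ep,Z_\ep,\vu_\ep)$, and the analogous $\vcg{\varphi}=\psi\zeta\,\nabla\Delta^{-1}[1_\Omega\vr]$ in the limit momentum equation \eqref{eq6.4}. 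Since $\operatorname{div}(\nabla\Delta^{-1}[1_\Omega g])=g$ on $\Omega$, the pressure/viscous terms immediately yield $\int\psi\zeta[\Pi_\delta(\vr_\ep,Z_\ep)\vr_\ep-(2\mu+\lambda)\vr_\ep\operatorname{div}\vu_\ep]$ and, in the limit equation, $\int\psi\zeta[\overline{\overline{\Pi_\delta(\vr,\vr s)}}\,\vr-(2\mu+\lambda)\vr\operatorname{div}\vu]$; the other contributions become the commutator-type terms we must analyze.

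The time-derivative term is handled via the regularized continuity equation \eqref{eq3.2}, which gives
\eqh{
\partial_t\nabla\Delta^{-1}[1_\Omega\vr_\ep]=-\nabla\Delta^{-1}[\operatorname{div}(1_\Omega\vr_\ep\vu_\ep)]+\ep\nabla\Delta^{-1}[\Delta(1_\Omega\vr_\ep)],
}
and likewise (without the $\ep$-term) for the limit. The extra $\ep$-contribution, together with the parasitic $\ep\,\nabla(\vr_\ep+Z_\ep)\cdot\nabla\vu_\ep$ term in \eqref{eq5.10}, vanishes as $\ep\to 0$ by the uniform bound $\sqrt\ep\,\|\nabla\vr_\ep,\nabla Z_\ep\|_{L^2(Q_T)}\le C$ from \eqref{estep1}. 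All remaining ``routine'' terms (those containing $\psi'$ or $\nabla\zeta$) pass to the limit as their natural counterparts, thanks to the $C_{\rm weak}(\overline I;L^B(\Omega))$ compactness \eqref{conv1} of $(\vr_\ep,Z_\ep)$, the weak convergence of $\vu_\ep$ in $L^2(I;W^{1,2})$, and the compact embedding $L^B(\Omega)\hookrightarrow\hookrightarrow W^{-1,2}(\Omega)$ which upgrades products of the form $\vr_\ep\cdot\mathcal{A}[\cdot]$ to strong convergence in suitable negative-order Sobolev spaces.

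The main obstacle, as always in this scheme, is the convective term
\eqh{
\int_0^T\!\!\!\int_\Omega \psi\zeta\,(\vr_\ep+Z_\ep)(\vu_\ep\otimes\vu_\ep):\nabla^2\Delta^{-1}[1_\Omega\vr_\ep]\,\dxdt
}
whose weak limit cannot be identified by pure compactness. After subtracting the corresponding expression obtained from the limit momentum equation, the difference takes the form of a Riesz commutator
\eqh{
\int_0^T\!\!\!\int_\Omega \psi\zeta\Big\{(\vr_\ep+Z_\ep)\vu_\ep\cdot\mathcal{R}\otimes\mathcal{R}\big[1_\Omega\vr_\ep\vu_\ep\big]-\vu_\ep\cdot\mathcal{R}\otimes\mathcal{R}\big[1_\Omega(\vr_\ep+Z_\ep)\vu_\ep\big]\vr_\ep\Big\}\,\dxdt,
}
up to lower-order pieces containing $\nabla\zeta$ that converge by compactness alone. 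Here the Div-Curl lemma in the form of Proposition~\ref{rieszcom} applies with $\vc U_\ep=(\vr_\ep+Z_\ep)\vu_\ep$ (bounded in $L^\infty(I;L^{2B/(B+1)})$ by \eqref{conv2}) and $\vc V_\ep=1_\Omega\vr_\ep\vu_\ep$, showing that this commutator expression converges in the sense of distributions to the corresponding expression with $\vc U$, $\vc V$ replaced by their weak limits. The exponents close because $B$ is chosen large, so the pair $(p,q)$ satisfies $1/p+1/q<1$.

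Combining all of the above, the difference of the two tested equations reduces to
\eqh{
\int_0^T\!\!\!\int_\Omega\psi\zeta\Big(\overline{\overline{\Pi_\delta(\vr,\vr s)\vr}}-(2\mu+\lambda)\overline{\vr\operatorname{div}\vu}\Big)\dxdt=\int_0^T\!\!\!\int_\Omega\psi\zeta\Big(\overline{\overline{\Pi_\delta(\vr,\vr s)}}\,\vr-(2\mu+\lambda)\vr\operatorname{div}\vu\Big)\dxdt.
}
Since $\psi\in C_c^1([0,T))$ and $\zeta\in C_c^\infty(\Omega)$ are arbitrary, identity \eqref{eq6.8} follows almost everywhere in $I\times\Omega$. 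The delicate point worth double-checking is the integrability needed to apply Proposition~\ref{rieszcom} and to justify the boundary contribution from the cut-off $1_\Omega$ (handled via the localizer $\zeta$); no further regularity beyond \eqref{estep1}--\eqref{estep2} and \eqref{eq6.2+} is required.
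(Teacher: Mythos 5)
Your overall strategy --- Lions-type test functions $\psi\,\nabla\Delta^{-1}[\cdot\,\vr_\ep]$ in the $\ep$-level momentum equation and $\psi\,\nabla\Delta^{-1}[\cdot\,\vr]$ in the limit equation, cancellation of the $\ep$-terms via (\ref{estep1}), and a Riesz-commutator/Div-Curl argument for the critical term --- is exactly the paper's, and the routine parts of your outline are fine. The gap is in the identification of the critical commutator. Combining the time-derivative contribution with the convective one inside a single equation yields (schematically, with $\vc U_\ep=(\vr_\ep+Z_\ep)\vu_\ep$)
\begin{equation*}
\int U_{\ep,i}\Big(u_{\ep,j}\,{\tn R}_{ij}[\vr_\ep\phi]-{\tn R}_{ij}[\vr_\ep u_{\ep,j}\phi]\Big),
\end{equation*}
and only after exploiting the self-adjointness and symmetry of ${\tn R}_{ij}$ does this reduce to the expression on the right-hand side of (\ref{ddd!}), namely $\int u_{\ep,j}\big(\vr_\ep\,{\tn R}_{ij}[U_{\ep,i}\phi]-U_{\ep,i}{\tn R}_{ij}[\vr_\ep\phi]\big)$ up to terms that vanish identically. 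This is a commutator between the \emph{density} $\vr_\ep\phi$ (bounded in $L^B(\Omega)$ at each time) and the \emph{momentum} $\vc U_\ep\phi$ (bounded in $L^{2B/(B+1)}(\Omega)$ at each time), tested against the extra factor $\vu_\ep$; here $1/p+1/q=(B+3)/(2B)<1$ and Proposition~\ref{rieszcom} applies. Your pairing $\vc U_\ep=(\vr_\ep+Z_\ep)\vu_\ep$, $\vc V_\ep=1_\Omega\vr_\ep\vu_\ep$ is not admissible: first, the antisymmetrized expression $\vc U_\ep\cdot({\tn R}\cdot[\vc V_\ep])-({\tn R}\cdot[\vc U_\ep])\cdot\vc V_\ep$ with these two momenta integrates to zero identically by self-adjointness of ${\tn R}_{ij}$, so it cannot be the obstruction you need to control; second, even taken at face value, both fields are only uniformly bounded (at fixed $t$) in $L^{2B/(B+1)}$, so $1/p+1/q=(B+1)/B>1$ for \emph{every} $B$ --- the exponents never close no matter how large $B$ is, contrary to your claim.

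A second, related omission: in the correct grouping one factor of $\vu_\ep$ sits \emph{outside} the commutator and converges only weakly in $L^2(I;W^{1,2})$, so weak (or distributional) convergence of the commutator is not enough. You must upgrade it to strong convergence in $L^2(I;W^{-1,2}(\Omega))$, which the paper does by combining the pointwise-in-time weak convergence in $L^{2B/(B+3)}(\Omega)$ furnished by Proposition~\ref{rieszcom} with the compact embedding $L^{2B/(B+3)}(\Omega)\hookrightarrow\hookrightarrow W^{-1,2}(\Omega)$ and uniform-in-time bounds; this is precisely the step that imposes $B>9/2$. Neither the rearrangement via self-adjointness nor this compactness upgrade appears in your proposal, and without them the limit of the convective term is not identified.
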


\begin{proof}
We denote by $\nabla\Delta^{-1}$ the pseudodifferential operator with Fourier symbol $\frac {{\rm i}\xi}{|\xi|^2}$ and by ${\tn R}$ the Riesz transform
with Fourier symbol $\frac {\xi\otimes\xi}{|\xi|^2}$.
Following Lions \cite{L4}, we shall use in the approximating momentum equation (\ref{eq5.10})$_{\vr_\ep,Z_\ep,\vu_\ep}$ test function
\begin{equation}\label{test1}
\varphi(t,x)=\psi(t)(\nabla\Delta^{-1}(\vr_\ep\phi))(t,x),\;\;\psi\in C^1_c(0,T),\;\phi\in C^1_c(\Omega)
\end{equation}
and in the limiting momentum equation (\ref{eq6.4}) test function 
\begin{equation}\label{test2}
\varphi(t,x)=\psi(t)(\nabla\Delta^{-1}(\vr\phi))(t,x),\;\;\psi\in C^1_c(0,T),\;\phi\in C^1_c(\Omega),
\end{equation}
subtract both identities and perform the limit passage $\ep\to 0$. This is a laborious, but nowadays standard calculation (whose details, for "simple" compressible Navier--Stokes equations, can be found e.g. in 
\cite[Lemma 3.2]{FNP}, \cite[Chapter 3]{NoSt}, \cite{EF70} or \cite[Chapter 3]{FeNoB}) leading
 to the identity
\begin{equation}\label{ddd!}
\int_0^T\intO{\psi\phi\Big(\overline{\overline{\Pi_\delta(\vr,\vr s)}}
-(2\mu +\lambda){\rm div}\,\vu\Big)\vr}\,{\rm d}t
\end{equation}
$$
-\int_0^T\intO{\psi\phi\Big(\overline{\overline{\Pi_\delta(\vr,\vr s)\vr}}
-(2\mu +\lambda)\overline{\vr\,{\rm div}\,\vu}\Big)}\,{\rm d}t
$$
$$
=
\int_0^T\intO{\psi\vu\cdot\Big(\vr {\tn R}\cdot((\vr+Z)\vu\phi)-(\vr+Z)\vu\cdot{\tn R}(\vr\phi)\Big)
}\,{\rm d}t
$$
$$
-
\lim_{\ep\to 0}\int_0^T\intO{\psi\vu_\ep\cdot\Big(\vr_\ep {\tn R}\cdot(\vr_\ep+Z_\ep)\vu_\ep\phi)-(\vr_\ep+ Z_\ep)\vu_\ep\cdot{\tn R}(\vr_\ep\phi)\Big)
}\,{\rm d}t.
$$
This process involves several integrations by parts and exploits continuity equations in form (\ref{eq6.3}) and regularized continuity equations in form
 (\ref{eq3.2}--\ref{eq3.3})$_{\vr_\ep,Z_\ep,\vu_\ep}$ in the same way as in the
mono-fluid theory. As in the mono-fluid theory,
{the essential observation for getting (\ref{ddd!}) is the fact that the map $\vr\mapsto\varphi$ defined above is a linear and continuous from
$L^p(\Omega)$ to $W^{1,p}(\Omega)$, $1<p<\infty$ as a consequence of classical   H\"ormander--Michlin's multiplier theorem of harmonic analysis.}
The most non trivial moment in this process is to show that the right-hand side of identity (\ref{ddd!}) is $0$. To see it, we repeat the reasoning \cite{FNP}
adapted to this situation.
We first realize  that the $(C_{\rm weak}(I, L^B(\Omega)))^2$-convergence of $(\vr_\ep, Z_\ep)$ and 
$C_{\rm weak}([0,T], L^{\frac{2B}{B+1}}(\Omega;R^3))$-convergence of $(\vr_\ep+Z_\ep)\vu_\ep$ evoked in (\ref{conv1}--\ref{conv2})
imply, in particular,
\begin{equation}\label{cvep!}
\mbox{for all $t\in [0,T]$},\;(\vr_\ep, Z_\ep)(t)\rightharpoonup(\vr, Z)(t)\;\mbox{in e.g. $(L^B(\Omega))^2$},
\end{equation}
$$
(\vr_\ep+Z_\ep)\vu_\ep(t) \rightharpoonup(\vr+Z)\vu(t)\;\mbox{in 
$L^{\frac {2B}{B+1}}(\Omega;R^3)$}.
$$
Since ${\tn R}$ is a continuous operator from $L^p(R^3)$ to $L^p(R^3)$, $1<p<\infty$, we  have the same type of convergence for sequences ${\tn R}[\vr_\ep(t)]$,
${\tn R}[Z_\ep(t)]$ and ${\tn R}[(\vr_\ep+Z_\ep)\vu_\ep(t)]$ to their respective limits
 ${\tn R}[\vr(t)]$,
${\tn R}[Z(t)]$ and ${\tn R}[(\vr+Z)\vu(t)]$.

 At this stage we  apply  to the above situation Proposition \ref{rieszcom}, and get
 $$
[\vr_\ep {\tn R}\cdot(\vr_\ep+Z_\ep)\vu_\ep\phi)-\vr_\ep\vu_\ep\cdot{\tn R}(\vr_\ep\phi)](t)\rightharpoonup
[\vr {\tn R}\cdot(\vr\vu\phi)-\vr\vu\cdot{\tn R}(\vr\phi)](t)
$$
for all $t\in [0,T]$ (weakly) in $L^{\frac{2B}{B+3}}(\Omega)$.
In view of compact embedding $L^{\frac{2B}{B+1}}(\Omega)\hookrightarrow\hookrightarrow W^{-1,2}(\Omega)$, 
and the boundedness of $\|\vr_\ep {\tn R}\cdot(\vr_\ep+Z_\ep)\vu_\ep)-(\vr_\ep+Z_\ep)\vu_\ep\cdot{\tn R}(\vr_\ep)\|_{W^{-1,2}(\Omega)}$  in $L^\infty(I)$,
we infer, in particular,
$$
\vr_\ep {\tn R}\cdot((\vr_\ep+Z_\ep)\vu_\ep\phi)-(\vr_\ep+Z_\ep)\vu_\ep\cdot{\tn R}(\vr_\ep\phi)\to\vr {\tn R}\cdot((\vr+Z)\vu\phi)-(\vr+Z)\vu\cdot{\tn R}(\vr\phi)
$$
in $L^2(0,T;W^{-1,2}(\Omega))$.
Recalling the $L^2(I;W^{1,2}(\Omega))$-weak convergence of $\vu_\ep$ we get the desired result (\ref{ddd!}). This completes the proof of Proposition \ref{p5}.

We realize for the further later reference, that the part of argumentation starting from (\ref{cvep!}) requires $B>9/2$. 
\end{proof}

We are now ready to prove the strong convergence of $\vr_\varepsilon \to \vr$,
more exactly
\begin{equation}\label{a.a.ep}
\vr_\varepsilon \to \vr\;\mbox{a.a. in $Q_T$}.
\end{equation}

We will do it again by mimicking the mono-fluid case, see e.g. \cite[Chapter 7]{NoSt}. To this end, we multiply the regularized continuity equation (\ref{eq3.2})$_{\vr_\ep,\vu_\ep}$ by $b_\delta'(\vr_\ep)$, $\delta>0$, where $b_\delta(\vr)=\vr\ln(\vr+\delta)$ is a (strictly) convex function on $[0,\infty)$. Integrating the resulting identity over $(0,\tau)\times\Omega$, employing convexity of $b_\delta$, reasoning as in (\ref{Z2r-}--\ref{Z2r}), we get after the passage to the limit $\delta\to 0$ and $\ep\to 0$ (in this order)
\begin{equation}\label{*}
\forall \tau\in \overline I,\;\int_{\Omega}\overline{\vr\ln\vr}(\tau,\cdot)\,{\rm d }x
-\int_{\Omega}\vr_0\ln\vr_0\,{\rm d }x
\le\int_0^\tau\int_\Omega\overline{\vr{\rm div}\,\vu}\,{\rm d}x\,{\rm d}t.
\end{equation}
In the above, in agreement with our convention
$\overline{\vr\ln\vr}$ and $\overline{\vr{\rm div}\,\vu}$ are $L^1(Q_T)$ weak limits
 of sequences $\vr_\ep\ln\vr_\ep$ and $\vr_\ep{\rm div}\,\vu_\ep$, respectively.

On the other hand, using the fact that $\vr$ is a renormalized solution of the continuity equation (\ref{eq6.3}) we obtain employing Proposition \ref{p1}  that
\begin{equation}\label{**}
\forall \tau\in \overline I,\;\int_{\Omega}\vr\ln\vr(\tau,\cdot)\,{\rm d }x -\int_{\Omega}\vr_0\ln\vr_0 \,{\rm d }x
=\int_0^\tau\int_\Omega\overline{\vr{\rm div}\,\vu}\,{\rm d}x\,{\rm d}t.
\end{equation}

We may therefore use \eqref{eq6.8}  to conclude that
$$
\int_\Omega (\Ov{\vr \ln \vr}-\vr \ln \vr)(\tau,\cdot) \dx \leq \frac{1}{2\mu+\lambda} \int_0^\tau \int_\Omega (\Ov{\Ov{\Pi_\delta (\vr,\vr s)}} \vr - \Ov{\Ov{\Pi_\delta (\vr,\vr s) \vr}}) \dx \dt.
$$
We now apply our  Hypothesis (H4) (cf. (\ref{eq2.4})) and write 
$$
\Pi_\delta(\vr, \vr s) = \mathcal P^\delta (\vr,s) - \mathcal R^\delta(\vr,s),
$$
where $\vr\mapsto{\mathcal P}^\delta (\vr,s)=(1-\eta_\delta(\vr\sqrt{1+s^2}){\cal P}(\vr,s)$ is non-decreasing for any $s\in[\underline a,\overline a]$, and $\vr\mapsto\mathcal R^\delta (\vr,s):= (1-\eta_\delta(\vr\sqrt{1+s^2}){\cal P}(\vr,s)$ is for any $s\in[\underline a,\overline a]$  non-negative, $C^2([0,\infty))$ uniformly bounded with respect to $s\in[\underline a,\overline a]$ and uniformly  compactly supported in $[0,\infty)$ with respect to $s \in [\underline a,\overline a]$. Recall that the cut-off function $\eta_\delta$ is defined in (\ref{eta}). We employ monotonicity of $\vr\mapsto{\cal P}_\delta(\vr,s)$ through Proposition \ref{p3} in order to infer
\begin{equation} \label{eq6.8a}
\int_\Omega (\Ov{\vr \ln \vr}-\vr \ln \vr)(\tau,\cdot) \dx \leq \frac{1}{2\mu+\lambda} \int_0^\tau \int_\Omega \big(\overline{\Ov{\mathcal R^\delta(\vr,s)\vr}}  - \overline{\Ov{\mathcal R^\delta(\vr,s)}}\vr\big) \dx \dt.
\end{equation}
Here and in what follows $\Ov{\Ov{b(\vr,s)}}$ denotes a weak limit (at least in $L^1(Q_T)$) of sequence $b(\vrd,s)$ (with $s$ a fixed function).

Due to the properties of $\mathcal R^\delta (\cdot,s)$ there exists $\Lambda>0$ such that $z\mapsto \Lambda z \ln z -z\mathcal R^\delta(z,s)$ and $z\mapsto \Lambda z \ln z +\mathcal R^\delta(z,s)$ are convex on $[0,\infty)$ for any $s \in [\underline a,\overline a]$. Consequently,
\begin{multline*}
\int_0^\tau \int_\Omega \big(\overline{\Ov{\mathcal R^\delta(\vr,s)\vr}}  - \overline{\Ov{\mathcal R^\delta(\vr,s)}}\vr\big) \dx \dt \leq  \Lambda \int_0^\tau \int_\Omega (\Ov {\vr \ln \vr}- \vr \ln \vr)\dx \dt \\
 + \int_0^\tau \int_\Omega \big(\mathcal R^\delta(\vr,s)- \Ov {\Ov{ \mathcal R^\delta(\vr,s)}}\big)\vr \dx \dt. 
\end{multline*}
As $\mathcal R^\delta$ is non-negative and $z\mapsto \Lambda z \ln z +\mathcal R^\delta(z,s)$ is convex, we have for any $s \in [\underline a,\overline a]$,
$$
\begin{aligned}
\int_0^\tau \int_\Omega \big(\mathcal R^\delta(\vr,s)&- \Ov {\Ov{ \mathcal R^\delta(\vr,s)}}\big)\vr \dx \dt \\
& \leq \int_{\{(t,x)\in I\times \Omega| \vr(t,x) \leq R_0\}} \big(\mathcal R^\delta(\vr,s)- \Ov {\Ov{ \mathcal R^\delta(\vr,s)}}\big)\vr \dx \dt \\
&\leq \Lambda  \int_{\{(t,x)\in I\times \Omega| \vr(t,x) \leq R_0\}} (\Ov {\vr \ln \vr}- \vr \ln \vr)\vr \dx \dt \\ 
&\leq \Lambda R_0\int_0^t \int_\Omega (\Ov {\vr \ln \vr}- \vr \ln \vr) \dx \dt,
\end{aligned}
$$
where $R_0>1$ is such that $\cup_{s\in[\underline a,\overline a]}{\rm supp}{\cal R}(\cdot,s)\subset [0,R_0]$.
Thus, finally 
\begin{equation}\label{eqeq}
\int_\Omega (\Ov {\vr \ln \vr}- \vr \ln \vr)(\tau,\cdot) \dx \leq \frac{\Lambda}{2\mu +\lambda} (1+R_0) \int_0^\tau \int_\Omega (\Ov {\vr \ln \vr}- \vr \ln \vr) \dx \dt 
\end{equation}
for any $\tau \in \overline I$. Hence, by Gronwall lemma and due to convexity of $\vr\mapsto\vr\ln\vr$,
$$
\mbox{for all $\tau\in\overline I$},\;\int_\Omega\Big(\Ov {\vr \ln \vr}- \vr \ln \vr\Big)(\tau,\cdot)\,{\rm d}x = 0.
$$
As $\vr\mapsto \vr\ln\vr$ is even strictly convex, this implies $\Ov {\vr \ln \vr}=\vr \ln \vr$, whence a.a. convergence (\ref{a.a.ep}) and consequently strong convergence in $L^q(I\times \Omega)$ for any $1\leq q<B+1$. 

The proof of the strong convergence is now finished and we get the momentum equation in the form
\begin{equation} \label{eq6.9}
\begin{aligned}
&\int_0^T \int_\Omega \big((\vr+Z)\vu \cdot \partial_t \vcg{\varphi} + (\vr+Z) (\vu\otimes \vu): \Grad \vcg{\varphi} + \Pi_\delta(\vr,Z)\, \Div \vcg{\varphi}\big) \dx \dt \\
= &\int_0^T\int_\Omega (\mu \Grad \vu :\Grad \vcg{\varphi} + (\mu+\lambda) \Div \vu \, \Div \vcg{\varphi} \big) \dx \dt - \int_\Omega \vc{m}_0 \cdot \vcg{\varphi}(0,\cdot) \dx.
\end{aligned} 
\end{equation}  
 
\subsection {Limit passage $\delta \to 0^+$} \label{se7}

First recall that we still have the energy inequality \eqref{eq6.5}. This inequality provides us only the estimates
\begin{equation} \label{eq7.1}
\begin{aligned}
&\|(\vr_\delta+Z_\delta)|\vu_\delta|^2\|_{L^\infty(I;L^1(\Omega))} + \|\vu_\delta\|_{L^2(I;W^{1,2}(\Omega))} + \|\vr_\delta\|_{L^\infty(I;L^\gamma(\Omega))} \\
& +  \|Z_\delta\|_{L^\infty(I;L^{q_{\gamma,\beta}}(\Omega))} + \delta^{1/B} (\|\vr_\delta\|_{L^\infty(I;L^B(\Omega))} +\|Z_\delta\|_{L^\infty(I;L^B(\Omega))}) \leq C,
\end{aligned}
\end{equation}
with number C independent of $\delta$, and $q_{\gamma,\beta}$  defined in Theorem \ref{t1}. We also deduce from Proposition \ref{L2} that the quantity $s_\delta$ defined according to (\ref{conv})$_{\vr_\delta,Z_\delta}$ is bounded:
\begin{equation}\label{s!!}
\mbox{for all $\tau\in\overline I$,}\; 0\le s_\delta(\tau)\le\overline a\;\mbox{for
a.a. $x\in \Omega$.}
\end{equation}

Estimate (\ref{eq7.1}) provides us solely the $L^\infty(I;L^1(\Omega))$ bound
for the pressure
which is not sufficient to pass to the limit in the momentum equation. To circumvent this problem, we need, similarly as in the previous limit passage, improved estimates of the densities.  We use as test function in the momentum equation \eqref{eq6.9} the function
$$
\vcg{\varphi} = \psi(t)\mathcal{B}\Big(\vr_\delta^\Theta -\frac{1}{|\Omega|} \int_\Omega \vr_\delta^\Theta\Big)
$$
for suitable $\Theta >0$ and $\psi \in C^1_c([0,T))$, $\psi(0)=0$, as in the previous section. For $\Theta$ sufficiently small (with respect to $\gamma$) it can be justified to be an appropriate test function. We get, similarly as for the compressible Navier--Stokes equations, the following
\begin{equation} \label{eq7.2}
\begin{aligned}
&\int_0^T \psi \int_\Omega P_\delta(\vr_\delta,Z_\delta) \vr_\delta^\Theta \dx \dt + \delta \int_0^T\int_\Omega (\vr_\delta^B + Z_\delta^B )\vr_\delta^\Theta \dx\dt \\
& = \int_0^T \psi \Big(\int_\Omega P_\delta (\vr_\delta,Z_\delta) \dx  + \delta \int_\Omega \Big(\vr_\delta^B + Z_\delta^B \\
& + \frac 12 \vr_\delta^2 Z_\delta^{B-2}+\frac 12 Z_\delta^2 \vr_\delta^{B-2}\Big) \dx \Big)\int_\Omega \vr_\delta^\Theta\dx \dt \\
& + \int_0^T \psi \int_\Omega \Big(\mu \Grad \vu_\delta : \Grad \vcg{\varphi} + (\mu+\lambda) \Div \vu_\delta \Div \vcg{\varphi}\\
&- (\vr_\delta + Z_\delta)(\vu_\delta \otimes \vu_\delta):\Grad \vcg{\varphi}\Big) \dx \dt \\
&- \int_0^T \partial_t \psi  \int_\Omega  \vr_\delta \vu_\delta \cdot \mathcal B \Big(\vr_\delta^\Theta -\frac{1}{|\Omega|} \int_\Omega \vr_\delta^\Theta\dx \Big) \dx \dt \\
&-\psi(0)\int_\Omega\vc{m}_0 \cdot \mathcal B\Big((\vr^\delta_0)^\Theta -\frac{1}{|\Omega|} \int_\Omega (\vr^\delta_0)^\Theta \dx\Big) \dx \\
&  + \int_0^T \psi \int_\Omega  \vr_\delta \vu_\delta \cdot  \mathcal B \Big(\Div (\vr_\delta^\Theta \vu_\delta) \\
&+(\Theta-1)\Big(\vr_\delta^\Theta \Div \vu -\frac{1}{|\Omega|} \int_\Omega \vr_\delta^\Theta \Div \vu_\delta\dx \Big)\Big)\dx \dt.  
\end{aligned}
\end{equation}

As in the case of the mono-fluid compressible Navier--Stokes equations we show that for $\Theta \leq \min\{\frac 23 \gamma-1,\frac{\gamma}{2}\} =: \gamma_{BOG}$, the right-hand side of this identity is bounded from above uniformly with respect to $\delta$, due to  estimates (\ref{eq7.1}), (\ref{s!!}). Therefore, using \eqref{eq2.2} from Hypothesis (H3) (and recalling (\ref{eq3.1})) we end up with
$$
\int_0^T  \int_\Omega \Big(\vr_\delta^{\gamma +\Theta} +\delta(\vr_\delta^{B+\Theta} + \vr^\Theta Z_\delta^B) \Big)\dx \dt 
\le C.
$$ 
Repeating the same with the test function 
$$
\vcg{\varphi} = \psi(t)\mathcal{B}\Big(Z_\delta^\Theta -\frac{1}{|\Omega|} \int_\Omega Z_\delta^\Theta\Big)
$$
we get similarly
$$
\int_0^T  \int_\Omega Z_\delta^{\beta + \Theta} \dx \dt 
\le C
$$
for $\Theta \leq \gamma_{BOG}$ if $\underline{a} =0$ or $\beta \leq \gamma$, and $\Theta \leq \beta_{BOG}$ if $\underline{a} >0$ and $\beta >\gamma$. 
Recalling (\ref{s!!}), we resume
\begin{equation}\label{eq7.3}
\|\vr_\delta\|_{L^\infty(I;L^{\gamma+\gamma_{\rm BOG}}(\Omega))}+
\|Z_\delta\|_{L^\infty(I;L^{{q_{\gamma,\beta}+\gamma_{\rm BOG}}}(\Omega))} \le C,
\end{equation}
\begin{equation} \label{eq7.3a}
\delta \int_0^T \psi \int_\Omega (\vr^{B +\gamma_{BOG}} + Z^{B +\gamma_{BOG}}) \dx \dt \leq C,
\end{equation}
\begin{equation} \label{eq7.4}
\| P(\vr_\delta,z_\delta)\|_{L^q(I\times \Omega)} \leq C
\end{equation}
for some $q>1$, and
\begin{equation} \label{eq7.3aa}
\delta \int_0^T \psi \int_\Omega Z^{B +\beta_{BOG}} \dx \dt \leq C
\end{equation}
if $\beta >\gamma$ and $\underline{a} >0$. 

{With estimates (\ref{eq7.1}) it is rudimentary to show
by nowadays standard arguments similarly as in (\ref{conv1}--\ref{conv3}) that 
$(\vrd,Z_\delta)\to(\vr,Z)$ in $C_{\rm weak}(\overline I; L^\gamma(\Omega))$, $
\vud\rightharpoonup\vu$ in $L^2(I;W^{1,2}(\Omega))$, $(\vrd\vu_\delta,Z_\delta\vud)\rightharpoonup_*(\vr\vu,Z\vu)$ in 
$L^\infty( I; L^{\frac{2\gamma}{\gamma+1}}(\Omega))$, $(\vrd+Z_\delta)\vud\to(\vr+Z)\vu$ in $C_{\rm weak}(\overline I; L^{\frac{2\gamma}{\gamma+1}}(\Omega)).$
}

We further write 
$$
P_\delta(\vr_\delta,Z_\delta)=-\eta_\delta(\sqrt{\vr_\delta^2+Z_\delta^2})P(\vr_\delta,Z_\delta)+P(\vr_\delta,Z_\delta),
$$
where
$$
\|\eta_\delta(\sqrt{\vr_\delta^2+Z_\delta^2})P(\vr_\delta,Z_\delta)\|_{L^q(Q_T)}\to 0.
$$
Moreover, by virtue of Hypothesis (H4) (cf. (\ref{eq2.4})) and (\ref{s!!}) 
$$
P(\vr_\delta,Z_\delta)=
P(\vr_\delta,\vr_\delta s_\delta)-P(\vr_\delta,\vr_\delta s)+
{\cal P}(\vr_\delta, s) +{\cal R}(\vr_\delta, s),
$$
where $s$ is defined in (\ref{sn}) in agreement with (\ref{conv}) (and it is also a \linebreak $C_{\rm weak^-*}(\overline I;L^\infty(\Omega))$ limit of $s_\delta$) and ${\cal P}$ and ${\cal R}$ possess all properties described in Hypothesis (H4), cf. (\ref{eq2.4}). According to Proposition \ref{L2} and due to Hypothesis (H3) --- namely equation (\ref{eq2.5-}) or (\ref{eq2.5}), respectively\footnote{Here we need that $\underline{a}>0$ if $\beta \geq \gamma +\gamma_{BOG}$.}
$$
\|P(\vr_\delta,\vr_\delta s_\delta)-P(\vr_\delta,\vr_\delta s)\|_{L^q(Q_T)}\to 0.
$$

Resuming (\ref{eq7.3}--\ref{eq7.4}) and the above considerations, we infer
\begin{equation}\label{Pidelta}
\Pi_\delta(\vr_\delta,Z_\delta)  \rightharpoonup \Ov{\Ov{ P(\vr,\vr s)}} = \Ov{P(\vr,Z)}\;\mbox{in $L^q(I\times \Omega)$ with some $q>1$.}
\end{equation}
{Here and in the sequel $\overline{g(\vr, Z,\vu,\Grad\vu)}$ denotes the $L^1(Q_T)$ weak limit of the sequence
$g(\vrd, Z_\delta,\vu_\delta,\Grad\vu_\delta)$ while  $\overline{\overline{g(\vr, \vr s,\vu,\Grad\vu)}}$ denotes the $L^1(Q_T)$ weak limit of the sequence
$g(\vrd, s \vr_\delta,\vu_\delta,\Grad\vu_\delta)$.}

Thus we may as above pass to the limit in the continuity equations and the momentum equation to get
\begin{equation} \label{eq7.5}
\begin{aligned}
&\int_0^T \int_\Omega \big((\vr+Z)\vu \cdot \partial_t \vcg{\varphi} + (\vr+Z) (\vu\otimes \vu): \Grad \vcg{\varphi} + \overline{{\Ov{P(\vr,\vr s)}}}\, \Div \vcg{\varphi}\big) \dx \dt \\
= &\int_0^T\int_\Omega (\mu \Grad \vu :\Grad \vcg{\varphi} + (\mu+\lambda) \Div \vu \, \Div \vcg{\varphi} \big) \dx \dt - \int_\Omega \vc{m}_0 \cdot \vcg{\varphi}(0,\cdot) \dx
\end{aligned} 
\end{equation} 
 for any $\vcg{\varphi} \in C^1_c([0,T)\times \Omega;R^3)$, 
\begin{equation} \label{eq7.6}
\begin{aligned}
\int_0^T \int_\Omega \big(\vr \partial_t \psi + \vr \vu \cdot \Grad \psi\big) \dx \dt  + \int_\Omega \vr_0 \psi(0,\cdot) \dx  & = 0,\\
\int_0^T \int_\Omega \big(Z \partial_t \psi + Z \vu \cdot \Grad \psi\big) \dx \dt + \int_\Omega Z_0 \psi(0,\cdot) \dx & = 0
\end{aligned}
\end{equation}
for any $\psi \in C^1_c([0,T) \times \Ov{\Omega})$. 

As $\gamma\ge 9/5$, the improved estimates of density guarantee $(\vr,Z)\in L^2(Q_T)$.
Consequently, according Proposition  \ref{p1} and its multi-dimensional version, Proposition \ref{p2}, the continuity equations (\ref{eq7.6})
are satisfied in the renormalized sense.

To conclude, we need to verify that the density sequence 
$\vr_\delta \to \vr$ a.a. in $Q_T$ {(and $Z_\delta \to Z$ a.a. in $Q_T$)}. We follow the approach presented in the previous section with one important change. Due to lower integrability of the pressure we are not any more able to deduce the effective viscous flux identity in the form \eqref{eq6.8}.

Instead, we shall replace Proposition \ref{p5} by
\begin{prop}\label{p5+}
Identity
\begin{equation} \label{eq7.9}
\Ov{\Ov{P(\vr,\vr s)T_k(\vr)}} -(2\mu+\lambda) \Ov{T_k(\vr) \Div \vu} = \Ov{\Ov{P(\vr,\vr s)}}\,  \Ov {T_k(\vr)} -(2\mu+\lambda) \Ov {T_k(\vr)} \Div \vu 
\end{equation} 
holds a.a. in $I\times \Omega$, where the truncation function $T_k(\vr)$ of $\vr$ is defined in (\ref{Lk}).
\end{prop}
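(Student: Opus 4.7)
The argument is a direct analogue of the Lions--Feireisl effective viscous flux identity established in Proposition \ref{p5}, but with the multiplier $\vr$ replaced by its truncation $T_k(\vr)$. This adaptation is forced on us because at the $\delta$-level we only control the pressure in $L^q(Q_T)$ with $q$ possibly close to $1$ (see \eqref{eq7.4}), which rules out testing with $\nabla\Delta^{-1}[\vr_\delta\phi]$ directly. The boundedness of $T_k$ however allows a test function built from $T_k(\vr_\delta)$ to absorb the low integrability of the pressure.

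Concretely, I will plug into the approximate momentum equation \eqref{eq6.9}$_{\vr_\delta,Z_\delta,\vu_\delta}$ the test field
$$\vcg{\varphi}_\delta(t,x)=\psi(t)\phi(x)\,\nabla\Delta^{-1}\bigl[\mathbf{1}_\Omega T_k(\vr_\delta)\bigr](t,x),\qquad \psi\in C^1_c(0,T),\ \phi\in C^1_c(\Omega),$$
and into the limit momentum equation \eqref{eq7.5} the field $\vcg{\varphi}=\psi\phi\,\nabla\Delta^{-1}[\mathbf{1}_\Omega \overline{T_k(\vr)}]$. Both belong to $L^\infty(0,T;W^{1,p}(R^3))$ for every $p<\infty$ by the $L^\infty$-bound on $T_k$, so they are admissible. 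The arising time-derivative terms will be eliminated by means of the renormalized continuity equation for $T_k(\vr_\delta)$, which is available by Proposition \ref{p1}, item 3, together with its $\delta\to 0$ counterpart
$$\partial_t\overline{T_k(\vr)}+\Div\bigl(\overline{T_k(\vr)}\vu\bigr)+\overline{\bigl(T_k'(\vr)\vr-T_k(\vr)\bigr)\Div\vu}=0\quad\mbox{in }\mathcal{D}'(Q_T),$$
which is obtained by passing to the limit in the former, using that $\vr_\delta\in L^2(Q_T)$ thanks to (\ref{eq7.3}) and $\gamma\ge 9/5$.

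Subtracting the two identities, performing the standard integration by parts, using \eqref{Pidelta} and the fact that $\Pi_\delta(\vr_\delta,Z_\delta)T_k(\vr_\delta)$ and $P(\vr_\delta,\vr_\delta s)T_k(\vr_\delta)$ share the same weak limit (via the Lipschitz estimate \eqref{eq2.5} and Proposition \ref{p4}, as in \eqref{eq6.6}), one is reduced to showing that the commutators
$$T_k(\vr_\delta)\,\tn{R}\!\cdot\!\bigl[(\vr_\delta+Z_\delta)\vu_\delta\phi\bigr]-(\vr_\delta+Z_\delta)\vu_\delta\cdot\tn{R}\bigl[T_k(\vr_\delta)\phi\bigr]$$
converge weakly to the same expression with $T_k(\vr_\delta)$ and $(\vr_\delta+Z_\delta)\vu_\delta$ replaced by their respective limits $\overline{T_k(\vr)}$ and $(\vr+Z)\vu$. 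This is precisely the scope of Proposition \ref{rieszcom}, applied with $\vc V_\delta=T_k(\vr_\delta)\phi\in L^\infty(Q_T)$ and $\vc U_\delta=(\vr_\delta+Z_\delta)\vu_\delta\phi\in L^\infty(0,T;L^{2\gamma/(\gamma+1)})$; combined with the $C_{\rm weak}(\overline I;L^{2\gamma/(\gamma+1)})$-convergence of $(\vr_\delta+Z_\delta)\vu_\delta$ (hence strong convergence in $L^2(I;W^{-1,2})$) and the weak $L^2(I;W^{1,2})$-convergence of $\vu_\delta$, this yields the required cancellation. The extra $\delta$-regularization in $\Pi_\delta$ disappears thanks to \eqref{eq7.3a}--\eqref{eq7.3aa}, and the arbitrariness of $\psi,\phi$ gives \eqref{eq7.9} almost everywhere in $I\times\Omega$.

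The hard part, as in Proposition \ref{p5}, is the Div--Curl step, but with a different flavour: there it was the compact embedding $L^B\hookrightarrow\hookrightarrow W^{-1,2}$ with $B>9/2$ that provided the required compactness, whereas here it is the uniform $L^\infty$-bound on $T_k(\vr_\delta)$, together with the (still available) $C_{\rm weak}$ time-regularity of $(\vr_\delta+Z_\delta)\vu_\delta$, that makes the argument go through at the low integrability range forced by \eqref{eq7.4}. A subtle bookkeeping point, worth treating separately, is the identification of the doubly-barred quantity $\overline{\overline{P(\vr,\vr s)T_k(\vr)}}$ as the actual weak $L^1(Q_T)$ limit of $\Pi_\delta(\vr_\delta,Z_\delta)T_k(\vr_\delta)$; this rests, exactly as in \eqref{eq6.6}, on the Lipschitz-in-$s$ estimate \eqref{eq2.5} of Hypothesis (H4) and on the strong convergence \eqref{eq3.8} of Proposition \ref{p4}.
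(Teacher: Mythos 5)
Your proposal is correct and follows essentially the same route as the paper: the same test functions $\psi\,\nabla\Delta^{-1}$ acting on $T_k(\vr_\delta)$ and $\overline{T_k(\vr)}$ (modulo the cosmetic choice of placing the cut-off $\phi$ inside or outside the operator), the renormalized continuity equation for $T_k(\vr_\delta)$ and its weak limit, and the commutator/Div--Curl step of Proposition \ref{rieszcom} made to work at low pressure integrability precisely because $T_k(\vr_\delta)$ is uniformly bounded, so its $C_{\rm weak}(\overline I;L^B(\Omega))$ convergence holds for every $B<\infty$. The only nominal slip is citing Proposition \ref{p4} (an $\varepsilon$-level statement) for the strong convergence $\int_\Omega\vr_\delta|s_\delta-s|^p\to 0$; at the $\delta$-level this is Proposition \ref{L2}, item 3 — but this does not affect the argument.
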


\begin{proof}
The proof of this proposition follows the same lines as the proof of Proposition \ref{p5}
and is similar to the mono-fluid case, see e.g. \cite[Chapter 7]{NoSt}. The test functions for the momentum equation (\ref{eq6.9})$_{\vr_\delta,Z_\delta,\vu_\delta}$
are not those of (\ref{test1}) but $\varphi=\psi\nabla \Delta^{-1}(\phi T_k(\vr_\delta))$ and test functions for the limiting momentum equation \eqref{eq7.5} are not those of (\ref{test2}) but $\varphi=\psi\nabla \Delta^{-1}(\phi\Ov{T_k(\vr)})$. We obtain instead of  (\ref{ddd!})
\begin{equation}\label{ddd!1}
\int_0^T\intO{\psi\phi\Big(\Ov{\overline{P(\vr,\vr s)}}
-(2\mu +\lambda){\rm div}\vu\Big)\,\Ov{T_k(\vr)}}\,{\rm d}t
\end{equation}
$$
-\int_0^T\intO{\psi\phi\Big(\Ov{\overline{P(\vr,\vr s)\,T_k(\vr)}}
-(2\mu +\lambda)\overline{T_k(\vr)\,{\rm div}\,\vu}\Big)}\,{\rm d}t
$$
$$
=
\int_0^T\intO{\psi\vu\cdot\Big(\Ov{T_k(\vr)}\, {\tn R}\cdot((\vr+Z)\vu\phi)-(\vr+Z)\vu\cdot{\tn R}(\Ov{T_k(\vr)}\,\phi)\Big)
}\,{\rm d}t
$$
$$
-
\lim_{\delta\to 0}\int_0^T\intO{\psi\vu_\delta\cdot\Big(T_k(\vr_\delta) {\tn R}\cdot(\vr_\delta+Z_\delta)\vu_\delta\phi)-(\vr_\delta+ Z_\delta)\vu_\delta\cdot{\tn R}(T_k(\vr_\delta)\phi)\Big)
}\,{\rm d}t.
$$
During the derivation of (\ref{ddd!1}) we use several times the fact
that $T_k(\vr_\delta)$ verifies continuity equation (\ref{eq6.3}) in the renormalized sense with renormalizing function $b(\vr)=T_k(\vr)$, and $\Ov{T_k(\vr)}$ verifies the weak limit of the same equation exactly in the same way as in the mono-fluid case, cf. Proposition \ref{p1}. The right-hand side of (\ref{ddd!1}) tends to $0$ as 
$\delta\to 0$. This can be proved exactly in the same way as in Proposition
\ref{p5}. Indeed, the first line in (\ref{cvep!}) can be replaced by
$$
\mbox{for all $t\in \overline I$,}\;(T_k(\vr_\delta), T_k(Z_\delta))\to
(\overline{T_k(\vr)},\overline{T_k(Z)})\;\mbox{in $(C_{\rm weak}(I;L^B(\Omega))^2$}
$$
with {\it any} $1\le B<\infty$, and due to this gain of summability,	the rest of the proof follow  the same lines up to its end.
\end{proof}


To finish, we have to prove the strong convergence of the density. We proceed similarly as in the previous section, we just change the proof in order to use the effective viscous flux in the form \eqref{eq7.9}. 
Coming back to estimates (\ref{eq7.1}) we easily find out that
\begin{equation}\label{estTk}
\|T_{k}(\vr_\delta)\|_{L^r(Q_T)}\le c\qquad \text{ for } \, 1\le r\le\gamma+\gamma_{\rm BOG},
\end{equation}
$$
 \|L_k(\vr_\delta)\|_{L^\infty(I;L^{r}(\Omega))}\le c\qquad \text{ for } \, 1\le r<\gamma,
$$
uniformly with $\delta$ and $k$.

Estimates (\ref{estTk})  are enough for what we need provided $\gamma>9/5$. If $\gamma=9/5$ then $\gamma+\gamma_{BOG}=2$ and $\vr\in L^2(Q_T)$ together with $\vu$ still satisfy continuity equation in the renormalized sense by virtue of Proposition \ref{p1}. In spite of this fact, in this borderline case we need a better information.

It is provided in the following proposition:
\begin{prop}\label{odm}
The sequence $\vr_\delta$ satisfies
\begin{equation}\label{odm1}
{\rm osc}_{\gamma+1}[\vrd\rightharpoonup\vr](Q_T):=\sup _{k>1}\limsup_{\delta\to 0}
\int_{Q_T}|T_k(\vr_\delta)-T_k(\vr)|^{\gamma+1}\, {\rm d}x\, {\rm d} t<\infty.
\end{equation}
\end{prop}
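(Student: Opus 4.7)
The plan is to adapt the Feireisl--Lions oscillation defect estimate to the present two-density setting, combining the effective viscous flux identity of Proposition~\ref{p5+} with the monotone--non-monotone splitting of Hypothesis~(H4) and a coercive algebraic inequality, and closing by an interpolation that becomes sharp at $\gamma=9/5$.

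First, I would integrate \eqref{eq7.9} over $Q_T$ to convert the pressure defect into the dissipation defect,
\begin{equation*}
\int_{Q_T}\bigl[\overline{\overline{P(\vr,\vr s)T_k(\vr)}}-\overline{\overline{P(\vr,\vr s)}}\,\overline{T_k(\vr)}\bigr]\dx\dt = (2\mu+\lambda)\lim_{\delta\to 0}\int_{Q_T}\bigl(T_k(\vr_\delta)-\overline{T_k(\vr)}\bigr)\Div\vu_\delta\dx\dt.
\end{equation*}
Cauchy--Schwarz together with $\|\Div\vu_\delta\|_{L^2(Q_T)}\le C$ from \eqref{eq7.1} bounds the right-hand side by $C\limsup_{\delta\to 0}\|T_k(\vr_\delta)-\overline{T_k(\vr)}\|_{L^2(Q_T)}$, and a short weak-convergence calculation yields $\limsup\|T_k(\vr_\delta)-\overline{T_k(\vr)}\|_{L^2}^2\le\limsup\|T_k(\vr_\delta)-T_k(\vr)\|_{L^2}^2$.

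Second, I would extract a coercive lower bound on the pressure defect in terms of $X_k:=\limsup_{\delta\to 0}\int_{Q_T}|T_k(\vr_\delta)-T_k(\vr)|^{\gamma+1}\dx\dt$. Using the splitting $P(\vr,\vr s)=\mathcal{P}(\vr,s)-\mathcal{R}(\vr,s)$ from (H4), the $\mathcal{R}$-contribution is uniformly bounded in $k,\delta$ because $\mathcal{R}$ is $C^2$ and uniformly compactly supported in $\vr$, while $T_k(\vr_\delta)\le\vr_\delta$ and $(\vr_\delta),(\vr)$ are uniformly bounded in $L^\infty(I;L^1(\Omega))$. In the borderline case $\gamma=9/5$, the explicit factorization \eqref{?!}, $\mathcal{P}(\vr,s)=f(s)\vr^\gamma+\pi(\vr,s)$ with $f\ge\underline f>0$ and $\pi(\cdot,s)$ non-decreasing, combined with the pointwise Feireisl inequality $(a^\gamma-b^\gamma)(T_k(a)-T_k(b))\ge c_\gamma|T_k(a)-T_k(b)|^{\gamma+1}$ (valid for $a,b\ge 0$, $\gamma>1$), gives
\begin{equation*}
(\mathcal{P}(a,s)-\mathcal{P}(b,s))\bigl(T_k(a)-T_k(b)\bigr)\ge\underline f\,c_\gamma|T_k(a)-T_k(b)|^{\gamma+1}.
\end{equation*}
Integrating, passing to the weak limit using the FKG-type sign $\overline{\vr^\gamma T_k(\vr)}\ge\overline{\vr^\gamma}\,\overline{T_k(\vr)}$ (both $\vr^\gamma$ and $T_k$ non-decreasing in $\vr$) together with the Jensen signs $\overline{\vr^\gamma}\ge\vr^\gamma$, $\overline{T_k(\vr)}\le T_k(\vr)$, one arrives at
\begin{equation*}
\underline f\,c_\gamma\,X_k\le C+\int_{Q_T}\bigl[\overline{\overline{P(\vr,\vr s)T_k(\vr)}}-\overline{\overline{P(\vr,\vr s)}}\,\overline{T_k(\vr)}\bigr]\dx\dt.
\end{equation*}
For $\gamma>9/5$ strictly, the slack $\gamma+\gamma_{BOG}>2$ permits the same conclusion using only monotonicity of $\mathcal{P}(\cdot,s)$ together with the lower bound \eqref{eq2.2}.

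Third, I would close the estimate by interpolation. Since $T_k(\vr_\delta),T_k(\vr)\le\vr_\delta,\vr$ are uniformly bounded in $L^\infty(I;L^1(\Omega))$, $\|T_k(\vr_\delta)-T_k(\vr)\|_{L^1(Q_T)}\le C$ uniformly in $k,\delta$. Interpolating $L^2$ between $L^1$ and $L^{\gamma+1}$ with exponent $\theta=(\gamma+1)/(2\gamma)$ yields, in the limit, $\|T_k(\vr_\delta)-T_k(\vr)\|_{L^2(Q_T)}^2\le C\,X_k^{1/\gamma}$, and combining with Steps~1 and~2 gives
\begin{equation*}
X_k\le C+C\,X_k^{1/(2\gamma)}.
\end{equation*}
This forces $X_k^{1-1/(2\gamma)}\le C$, i.e. $X_k\le C^{2\gamma/(2\gamma-1)}$ uniformly in $k$. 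The exponent $1-1/(2\gamma)$ remains $\ge 13/18>0$ for $\gamma\ge 9/5$, yielding the desired uniform bound.

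The main obstacle I expect is Step~2: extracting a coercive lower bound on the pressure defect with a constant uniform in the parameter $s\in[\underline a,\overline a]$, in the presence of the non-monotone correction $\mathcal{R}$. The latter is controlled by its compact support and uniform smoothness; the former is precisely what the hypothesis $f\ge\underline f>0$ in \eqref{?!} guarantees at the borderline $\gamma=9/5$. The interpolation of Step~3 is also sharp exactly at this exponent, which explains why $9/5$ is the integrability threshold of the theorem.
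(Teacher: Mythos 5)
Your proposal is correct and follows essentially the same route as the paper: both start from the effective viscous flux identity \eqref{eq7.9}, use the splitting of Hypothesis (H4) together with the factorization \eqref{?!} (monotonicity of $\pi(\cdot,s)$ and $f\ge\underline f$) to extract the coercive lower bound $\underline f\,X_k$ via the algebraic inequality $|a^\gamma-b^\gamma|\,|T_k(a)-T_k(b)|\ge|T_k(a)-T_k(b)|^{\gamma+1}$ and the convexity/concavity signs of the weak limits, bound the $\mathcal{R}$-contribution by a constant through its uniform compact support, and close by estimating the dissipation defect with Cauchy--Schwarz and the $L^1$--$L^{\gamma+1}$ interpolation yielding $X_k\le C+C\,X_k^{1/(2\gamma)}$. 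The only cosmetic difference is that the paper splits the dissipation term into $I^1_\delta+I^2_\delta$ rather than keeping $T_k(\vr_\delta)-\overline{T_k(\vr)}$ in one piece, which your weak-convergence identity handles equivalently.
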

\begin{proof}
Proposition
 \ref{odm} follows from the effective viscous flux identity derived in Proposition \ref{p5+}. To see this fact, we employ in (\ref{eq7.9}) decomposition (\ref{?!}) in order to get (recall, $\underline f$ was defined in (\ref{?!}))
$$
 \underline f \int_0^T\int_\Omega \Big(\overline{\vr^\gamma T_k(\vr)}-\overline{\vr^\gamma}\;\overline{T_k(\vr)}\Big)\, {\rm d}x\, {\rm d}t + \int_0^T\intO{\Big(
{\Ov{\overline{\pi(\vr,s)T_k(\vr)}}-\Ov{\overline{\pi(\vr,s)}}\;\overline{T_k(\vr)}}\Big)}\, {\rm d}t
$$
{
\begin{equation}\label{last}
=
\limsup_{\delta\to 0} \sum_{i=1}^3 I_\delta^i,
\end{equation}
}
where
$$
I_\delta^1=(2\mu+\lambda)\int_0^T\intO{\Big(T_k(\vrd)-{T_k(\vr)}\Big)\Div\vud}\,{\rm d}t,
$$
$$
I_\delta^2=(2\mu+\lambda)\int_0^T\intO{\Big(T_k(\vr)-\overline{T_k(\vr)}\Big)\Div\vud}\,{\rm d}t,
$$
$$
I_\delta^3= \int_0^T\intO{\Big(\Ov{\overline {\mathfrak{R}(\vr,s) T_k(\vr)}}-\Ov{\overline{\mathcal{R}(\vr,s)}}\,\overline{T_k(\vr)}\Big)}\,{\rm d}t.
$$
We first observe that the second integral at the left hand side is non negative (indeed, {the map $\vr\mapsto\pi(\vr,s)$ is non-decreasing
in $\vr$ for any fixed $s$} and we
can use Proposition \ref{p3}). Second, we employ the H\"older inequality and interpolation together with the lower weak 
semi-continuity of norms and bounds (\ref{eq7.1}),  (\ref{estTk})
to estimate integrals $I_\delta^1$, $I_\delta^2$ in order to get
{\begin{equation}\label{dod8}
|I_\delta^1+I_\delta^2|
\le c  \Big[{\rm osc}_{\gamma+1}[\vrd\rightharpoonup\vr](Q_T)\Big]^{\frac{1}{2\gamma}}
\end{equation}
with $c>0$ independent of $k$.
}
Finally, since $\mathcal{R}$ is continuous with compact support, integral $|I^3_\delta|$ is bounded by an universal constant $c=c(\mathcal{R})>0$.

We write,
$$
\int_0^T\int_\Omega \Big(\overline{\vr^\gamma T_k(\vr)}-\overline{\vr^\gamma}\;\overline{T_k(\vr)}\Big)\,{\rm d}x\,{\rm d}t
$$
$$
=\limsup_{\delta\to 0}\int_0^T\intO{ \Big(\vrd^\gamma-\vr^\gamma\Big)\Big(T_k(\vrd)-T_k(\vr)\Big)}\,{\rm d}t
$$
$$
+
\int_0^T\intO{\Big(\vr^\gamma-\overline{\vr^\gamma}\Big)\Big(\overline{T_k(\vr)}-T_k(\vr)\Big)}\,{\rm d}t
$$
$$
\ge \limsup_{\delta\to 0}\int_0^T\intO{ \Big|T_k(\vrd)-T_k(\vr)\Big|^{\gamma+1}}\,{\rm d}t,
$$
where we have employed convexity of $\vr\mapsto\vr^\gamma$ and concavity of $\vr\mapsto T_k(\vr)$ on $[0,\infty)$, and algebraic inequality
$$
|a-b|^\gamma\le |a^\gamma-b^\gamma|\;\mbox{and}\;|a-b|\ge |T_k(a)-T_k(b)|, \; \ (a,b)\in [0,\infty)^2.
$$
Inserting the last inequality into { (\ref{last}) yields (in combination with estimates of integrals $I_\delta^1-I_\delta^3$)} the statement of Proposition \ref{odm}.
 \end{proof}

We know that continuity equation (\ref{eq6.3})$_{\vr_\delta,\vu_\delta}$ is satisfied, in particular,  in the renormalized sense with renormalizing functions  $b(\vr)=L_k(\vr)$, and that (\ref{eq6.3})$_{\vr,\vu}$ is satisfied in the renormalized sense with the same function $L_k(\vr)$, cf. Proposition \ref{p1}, namely (\ref{ce7}).
Using these equations with test function $\varphi=1$ (and noticing that
$z L_k'(z) -L_k(z) = T_k(z)$), we get, in particular,
\begin{equation} \label{eq7.11}
\begin{aligned}
\int_\Omega (L_k(\vr_\delta)-L_k(\vr))(\tau,\cdot)\dx & = \int_0^\tau\int_{\Omega}(T_k(\vr)\Div \vu -\Ov{T_k(\vr)}\Div \vu_\delta)\dx\dt \\
&+ \int_0^\tau\int_\Omega (\Ov{T_k(\vr)}-T_k(\vr_\delta))\Div \vu_\delta \dx \dt 
\end{aligned}
\end{equation}
for all $\tau\in\overline I$,
where $\overline{T_k(\vr)}$ is $L^r(Q_T)$ weak limit of $T_k(\vrd)$, cf. (\ref{estTk}).   
Using \eqref{eq7.9}, decomposition (\ref{eq2.4}) and passing with $\delta \to 0$ we can write
\begin{equation} \label{eq7.12}
\begin{aligned}
&\int_\Omega (\Ov{L_k(\vr)}-L_k(\vr))(\tau,\cdot)\dx = \int_0^\tau\int_{\Omega}(T_k(\vr) -\Ov{T_k(\vr)})\Div \vu\dx\dt 
\\
&+ \frac{1}{2\mu+\lambda} \int_0^\tau\int_\Omega \big(\Ov{\Ov{{\cal P}(\vr,s)}}\,  \Ov {T_k(\vr)}
-\Ov{\Ov{{\cal P}(\vr, s)T_k(\vr)}}\big) \dx \dt\\
&+\frac{1}{2\mu+\lambda} \int_0^\tau\int_\Omega \big(\Ov{\Ov{{\cal R}(\vr,s)}}\,  \Ov {T_k(\vr)}
-\Ov{\Ov{{\cal R}(\vr, s)T_k(\vr)}}\big) \dx \dt
\end{aligned} 
\end{equation}
for all $\tau\in\overline I$, where $\overline{L_k(\vr)}$ is $C_{weak}(\overline I;L^r(\Omega))$ limit of sequence $L_k(\vrd)$, cf. (\ref{estTk}) and Proposition \ref{p1}. The first term at the right-hand side is bounded by
$$
\|T_k(\vrd)-T_k(\vr)\|_{L^2(Q_T)}\|\Div\vu\|_{L^2(Q_T)}\le c\limsup_{\delta\to 0}
\|T_k(\vrd)-T_k(\vr)\|_{L^1(Q_T)}^{\frac {\gamma-1}{2\gamma}},
$$ 
where we have used estimates (\ref{eq7.1}), (\ref{odm1}) and interpolation.\footnote{
Estimate (\ref{odm1}) is needed only in the borderline case $\gamma=9/5$ when $\gamma+\gamma_{BOG}=2$. If $\gamma>9/5$ then $\gamma+\gamma_{BOG}>2$ and we can get similar result interpolating $T_k(\vrd)-T_k(\vr)$ between $L^1$ and $L^{\gamma+\gamma_{BOG}}$ while using only estimates (\ref{estTk}).}
The second term is non negative according to Proposition \ref{p3}. By the same argumentation as in (\ref {eq6.8a}), the third term is bounded by
$$
c\Lambda (1+R_0)\int_0^\tau\int_{\Omega}\Big(\overline{\vr\ln\vr}-\vr\ln\vr\Big)\,{\rm d} x\,{\rm d}t
$$
with sufficiently large $\Lambda>1$, where $R_0>1$ is such that $\cup_{s\in[\underline a,\overline a]}{\rm supp}{\cal R}(\cdot,s)\subset [0,R_0]$.

Writing
$$
\|T_k(\vr)-\overline{T_k(\vr)}\|_{L^1(Q_T)}\le \|T_k(\vr)-\vr\|_{L^1(Q_T)}+\|\overline{T_k(\vr)}-\vr\|_{L^1(Q_T)}
$$
$$
\le \|T_k(\vr)-\vr\|_{L^1(Q_T)}+ \liminf_{\delta\to 0}\|{T_k(\vr_\delta)}-\vr_\delta\|_{L^1(Q_T)}\to 0\;\mbox{as $k\to\infty$},
$$
and recalling
\begin{equation}\label{LT}
\overline{L_k(\vr)} \to\overline{\vr\ln\vr},\;L_k(\vr)\to\vr\ln\vr
,\;\mbox{in $C_{weak}([0,T];L^r(\Omega))$ for any $1\leq r<\gamma$}
\end{equation}
we arrive at (\ref{eqeq}) which implies in virtue of Gronwall lemma and strict convexity of $\vr\mapsto\vr\ln\vr$ on $[0,\infty)$
$$
\vrd\to\vr\;\mbox{a.a. in $Q_T$}.
$$
{This relation in combination with (\ref{cvs}) yields also $
Z_\delta\to Z$ a.e. in $Q_T$. Now, it is rather standard to pass to the limit in the energy inequality (\ref{eq6.5})$_{\vrd,Z_\delta,\vud}$ 
and to obtain energy inequality (\ref{eq2.9}).}

This completes the proof of Theorem \ref{t1}.

\section{The real bi-fluid system: Proof of Theorem \ref{t1bi}}\label{sdod1}

In this Section we prove Theorem \ref{t1bi}. In view of what was said in Section \ref{se1}, it reduces to show that system (\ref{eq1.1bi}--\ref{eq1.3bi}) can be viewed as system (\ref{eq1.1}--\ref{eq1.3}) with an adequate pressure. The task is therefore to verify that the new pressure satisfy  Hypotheses  (H3--H5) from Section \ref{se2}, and conclude.

We introduce new unknowns
\begin{equation}\label{bi1}
\vr=\mathfrak{a}\vr_+,\quad Z= (1-\mathfrak{a})\vr_-
\end{equation}
and use the last constraint in (\ref{eq1.3bi}), namely
$$
\mathfrak{P}_{+}(\mathfrak{ \vr_+}) = \mathfrak{P}_{-}(\vr_-)
$$
to express quantities $(\mathfrak{a}, {\vr_+}, \vr_-)$ in terms of
new quantities $(\vr, Z)$. Equation for $\vr_+$ reads
\begin{equation}\label{bi2}
\vr_+\mathfrak{q}(\vr_+)-\mathfrak{q}({\vr_+})\vr-Z\vr_+=0,\;\mbox{where
$\mathfrak{q}={\mathfrak{P}}^{-1}_{-}\circ\mathfrak{P}_{+}$}. 
\end{equation}
It admits for
any $\vr\ge 0$, $Z\ge 0$ a unique solution 
\begin{equation}\label{bid1}
\left\{\begin{array}{c}
0<\vr_+={\vr_+}(\vr,Z)\in[\vr,\infty)\;\mbox{if $\vr>0$ or $Z>0$},\\
\vr_+(0,0)=0
\end{array}\right\}
\end{equation}
such that
\begin{equation}\label{bid2}
{\vr_+}(\vr,0)=\vr,\quad {\vr_+}(0,Z)=\mathfrak{q}^{-1}(Z).
\end{equation}
Indeed, if $(\vr,Z)\in (0,\infty)^2$, we get this information employing monotonicity of  function
$s\mapsto s\mathfrak{q}(s)-\mathfrak{q}({s})\vr-Z s$ and applying to it the intermediate value theorem for continuous functions, and solving equation (\ref{bi2}) 
explicitly in all remaining cases $\vr=0$ or $Z=0$.

According to the implicit function theorem applied to (\ref{bi2}), the function ${\vr_+}\in C^2((0,\infty)^2)$ and
for all $ \vr>0$, $ Z>0$,
\begin{equation}\label{*!}
0<\partial_\vr\vr_+(\vr,Z)
=\frac{\vr_+\mathfrak{q}(\vr_+)} {\vr\mathfrak{q}(\vr_+)+\vr_+\mathfrak{q}'(\vr_+)(\vr_+-\vr)}
=\frac{Q'(\vr_+)}{\mathfrak{q}'(\vr_+)-\vr Q''(\vr_+)}
\end{equation}
and
\begin{equation}\label{*!+}
 0<
\partial_Z\vr_+(\vr,Z)= 
\frac{(\vr_+)^2}{\vr \mathfrak{q}(\vr_+)+\vr_+\mathfrak{q}'(\vr_+)(\vr_+-\vr)
},
\end{equation}
where  $Q(s)=\int_0^s\frac{\mathfrak{q}(z)}z\,{\rm d}z$. 

We derive from these formulas and (\ref{bid1}--\ref{bid2}), in particular, the following:
\begin{description}
\item {\it 1.} 
\begin{equation}\label{it1}
\forall Z\ge 0,\;\mbox{function $\vr\mapsto \vr_+(\vr,Z)$ is continuous in $[0,\infty)$},
\end{equation}
$$
\forall \vr\ge 0,\;\mbox{function $Z\mapsto \vr_+(\vr,Z)$ is continuous in $[0,\infty)$}.
$$
\item{\it 2.} 
\begin{equation}\label{it2}
\forall (\vr,Z)\in {\cal O}_{\underline a}\cap (0,\infty)^2 ,
\end{equation}
$$
\max\Big\{\vr\,,\,\underline c\Big(\vr+\mathfrak{q}^{-1}(Z)\Big)\Big\}\le\vr_+(\vr,Z)\le{\overline c}\big(\vr+\mathfrak{q}^{-1}(Z)\big),
$$
where $0<\underline c<\overline c<\infty$.

Indeed, expression for $\partial_\vr\vr_+(\vr,Z)$ is positive and admits upper bound
\begin{equation} \label{137a}
\frac{Q'(\vr_+)}{\mathfrak{q}'(\vr_+)-\vr_+ Q''(\vr_+)}=1,
\end{equation}
and lower bound
$$
0<\underline q \le\frac{\vr_+\mathfrak{q}(\vr_+)} {\vr_+\mathfrak{q}(\vr_+)+\vr_+\mathfrak{q}'(\vr_+)\vr_+}\le \frac{\vr_+\mathfrak{q}(\vr_+)} {\vr\mathfrak{q}(\vr_+)+\vr_+\mathfrak{q}'(\vr_+)(\vr_+-\vr)}
$$
according to assumption (\ref{Hbi4}), where we have used (\ref{bid1}--\ref{bid2}). This yields (\ref{it2}) after integration of (\ref{*!}) from $0$ to $\vr$, knowing that according to (\ref{bid1}--\ref{bid2}), $\vr$ is another lower bound.

\item{\it 3.} 
Returning with (\ref{it2}) to (\ref{*!+}), we get by the same token
\begin{equation}\label{it3}
\forall (\vr,Z)\in {\cal O}_{\underline a}\cap (0,\infty)^2),\;
 0<
\partial_Z\vr_+(\vr,Z)\le
\frac {\vr_+}{\vr}\frac{\vr_+}{\mathfrak{q}(\vr_+)
}
\end{equation}
$$
\le \overline C\Big(1+\frac {\mathfrak{q}^{-1}(\overline a\vr)}
\vr\Big) \times
\left\{\begin{array}{c}
\frac{\vr+ \mathfrak{q}^{-1}(\overline a\vr)}{\mathfrak {q}(\vr)}\;\mbox{if $\underline a=0$},\\
\frac{\vr+ \mathfrak{q}^{-1}(\overline a\vr)}{\mathfrak {q}(\underline{c} (\vr+\mathfrak{q}^{-1}(\underline a\vr)))}\;\mbox{if $\underline a>0$}
\end{array}\right\}
$$ 
with some $\overline C>0$.
\end{description}

After this preparation, we know that system (\ref{eq1.1bi}--\ref{eq1.3bi}) can be rewritten as system 
(\ref{eq1.1}--\ref{eq1.3}) with
\begin{equation}\label{biP}
P(\vr,Z):= \mathfrak{P}_{+}(\vr_+(\vr,Z)).
\end{equation}

Our goal is to show that pressure $P$ defined above verifies all Hypotheses
(H3--H5) with 
\begin{equation}\label{gb}
\gamma=\gamma^+,\; \beta=\gamma^-.
\end{equation}

Note that due to (\ref{Hbi2}) we have for large $s$:
\begin{equation} \label{asympt}
\begin{aligned}
\mathfrak{P}_+ (s) \sim_{\infty} s^{\gamma^+}, \quad & \mathfrak{P}_- (s) \sim_{\infty} s^{\gamma^-}, \\
\mathfrak{q}(s) \sim_{\infty} s^{\frac{\gamma^+}{\gamma^-}}, \quad & \mathfrak{q}^{-1}(s) \sim_{\infty} s^{\frac{\gamma^-}{\gamma^+}}. 
\end{aligned}
\end{equation}
Recalling again assumptions (\ref{Hbi2}) and (\ref{it2}), combining them with (\ref{asympt}), we deduce immediately (\ref{eq2.2}), i.e. $\forall (\vr,Z)\in {\cal O}_{\underline a}$
\begin{equation}\label{Pbi}
\underline C(\vr^{\gamma^+} + Z^{\gamma-} -1)
\le P(\vr,Z)\le \overline C(1+\vr^{\gamma^+}+Z^{\gamma^-})
\end{equation}
with some $0<\underline C<\overline C<\infty$.

Coming back
with this information to the explicit formula (\ref{HP}) for the Helmholtz 
function we infer
$$
\underline C(\vr^{\gamma^+}+ Z^{\gamma^-}-1)\le H_P(\vr,Z)\le \overline C(1+\vr^{\gamma_+}+Z^{\gamma^-}).
$$
This finishes the verification of formula (\ref{eq2.3}) related to growth conditions
(\ref{eq2.2}).

Now we employ (\ref{it2}), (\ref{it3}), monotonicity of $\mathfrak{q}$ and convexity of $\mathfrak{P}_+$
in order to deduce for all $(\vr,Z)\in {\cal O}_{\underline a}{\cap (0,\infty)^2}$:
\begin{multline*}
0<\partial_Z P(\vr,Z)=\mathfrak{P}_+'(\vr_+(\vr,Z))\partial_Z\vr_+(\vr,Z)\\
\le C \frac{\mathfrak P_{+}'(\vr+\mathfrak{q}^{-1}(\overline a \vr))(\vr+\mathfrak{q}^{-1}(\overline a \vr))^{2}}{\vr \mathfrak{q}(\vr)}
\end{multline*}

By the same token, recalling assumptions (\ref{Hbi2})  together with (\ref{asympt}) for estimates at infinity  and (\ref{Hbi3+}) for estimates near zero, we 
deduce from the latter formula
\begin{equation}\label{PZbi}
\forall (\vr,Z)\in {\cal O}_{\underline a}\cap (0,\infty)^2,\;0<\partial_Z\vr_+(\vr,Z)\le
C\Big(\vr^{-\underline\Gamma}+\vr^{\overline\Gamma-1}\Big),
\end{equation}
where 
$$
\overline\Gamma=
\left\{\begin{array}{c}
\max\{\gamma^+ -\frac{\gamma^+} {\gamma^-}+ 1,
\, \gamma^- + \frac{\gamma^-}{\gamma^+}-\frac{\gamma^+} {\gamma^-}\}\;\mbox{if $\underline a=0$}\\
\max\{\gamma^+ -\frac{\gamma^+} {\gamma^-}+ 1, \gamma^- +\frac{\gamma^-} {\gamma^+}-1 
 \}\;\mbox{if $\underline a>0$.}
\end{array}
\right\}.
$$

 The new pressure (\ref{biP}) satisfies all requirements
of  Hypothesis (H3).


Concerning Hypothesis (H4), we start with the first one, namely (\ref{eq2.4}). It suffices to take  $ {\cal P}(\vr,s)= P(\vr,s \vr)$ and
$\mathcal{R}=0.$  If $\gamma^+=9/5$, we need also the decomposition (\ref{?!}).
We show it for any $\gamma^+\ge 9/5$ provided assumption (\ref{Hbi4}) is valid.
We take
\begin{equation}\label{!!}
P(\vr, \vr s)=\underline q\frac{\underline a_+} {2\gamma^+}\vr^{\gamma^+}+\pi(\vr,s) -\mathcal{R}(\vr,s)
\end{equation}
with
 $$
\pi(\vr,s)=P(\vr,\vr s)-\underline q\frac{\underline a_+} {2\gamma}\vr^{\gamma^+} + \underline q b_+\zeta(\vr)\min\{\underline r,\vr\},
$$
$$
\mathcal{R}(\vr,s)=\underline q b_+\zeta(\vr)\min\{\underline r,\vr\}, 
$$
where
$\underline r$ solves equation $a_+ z^{\gamma-1} -2b_+=0$ and $\zeta\in C^2_c([0,\infty))$, $\zeta(z)=1$ for $z\in [0,R)$, $\zeta(z)=0$ for $z\in [3R,\infty)$, $0\le -\zeta'(z)\le \frac 1R$ with $R\ge\max\{2\underline r, (2b_+/\underline a_+)^{1/\gamma^+}\}$.
Indeed, for all $s\in [\underline a,\overline a]$ and all $\vr >0$, we have
$$
\frac{\partial\pi}{\partial\vr}(\vr,s)=
\mathfrak{P}_+'(\vr_+(\vr,\vr s))\Big(\frac{\partial\vr_+}{\partial\vr}(\vr,\vr s)+s\frac{\partial\vr_+}{\partial Z}(\vr, \vr s)\Big) -\underline q\frac{\underline a_+}2\vr^{\gamma^+-1}
$$
$$
+\left\{
\begin{array}{c}
\underline q b_+\;\mbox{if $\vr\in [0,\underline r]$}\\
\underline q b_+\zeta'(\vr)\;\mbox{if $\vr>\underline r$}
\end{array}
\right\}
$$ 
$$
\ge
\underline q\underline a_+ \vr^{\gamma^+-1} -\underline q b_+ -\underline q\frac{\underline a_+}2\vr^{\gamma^+-1}
+\left\{
\begin{array}{c}
\underline q b_+\;\mbox{if $\vr\in [0,\underline r]$}\\
0\;\mbox{if $\underline r<\vr<R$}\\
\underline q\underline r b_+\zeta'(\vr)\;\mbox{if $\vr>R$}
\end{array}
\right\}\ge 0,
$$
where we have used first (\ref{*!}), (\ref{*!+}) and then assumption (\ref{Hbi2}).
To be more exact we should take in the formulas above instead of $\vr\mapsto m(\vr):= \min\{\underline r,\vr\}$ its regularization via a one dimensional mollifier (e.g. $m*\chi_{\underline r/8}$, cf. (\ref{chi}), where we would consider $m$ extended by $\underline r$ to negative values of $\vr$), in order to comply with the requested regularity of ${\cal R}$. We skip this unessential technical point letting the details to the interested reader. { Finally, coming back to (\ref{it2}) and (\ref{biP}),
we easile verify assumption (\ref{?!+}).

It remains to verify Hypothesis (H5) which consists in verifying (\ref{eq2.3a-}).
This amounts to show that supremum of $|\partial_\vr P(s,Z)|+ |\partial^2_Z P(s,Z)|$ over the set $\underline r\le s\le \vr$, $\underline a\vr\le Z\le \overline a\vr$
has a uniform polynomial growth in $\vr$  for large $\rho$'s with coefficient
$c=c(\underline r)$ which may blow up as $\underline r\to 0$.
We calculate first
$$
\partial_\vr P(\vr,Z)= \mathfrak{P}'(\vr_+(\vr,Z))\partial_\vr\vr_+(\vr,Z)
$$
and conclude thanks to (\ref{*!}), (\ref{137a}) and assumption (\ref{Hbi2}). Likewise, we calculate
$$
\partial^2_Z P(\vr,Z)=\mathfrak{P}''(\vr_+(\vr,Z))\Big(\partial_Z\vr_+(\vr,Z)\Big)^2
+\mathfrak{P}'(\vr_+(\vr,Z))\partial^2_Z\vr_+(\vr,Z)
$$
and look at (\ref{*!+}--\ref{it3}) and assumption (\ref{Hbi2}).

We have thus shown that problem to solve (\ref{eq1.1bi}--\ref{eq1.3bi}) reduces
to solving of problem (\ref{eq1.1}--\ref{eq1.3}) with new pressure (\ref{biP}).
Thorem \ref{t1bi} is proved.


\section{The academic multifluid system} \label{se8}

In this section we formulate the problem for the case of academic multifluid system in the spirit of the model bi-fluid system studied in the previous sections. Note that we shall not treat the real multifluid problem as due to its complexity it would require many more details which would extend the length of the paper. On the other hand, in case of the academic multifluid system the extension is much more straightforward. We consider the following system of equations ($K\geq 2$)
\begin{equation}\label{eq8.1}
\begin{aligned}
\partial_t \vr  &+ \Div(\vr \vu) = 0, \\
\partial_t Z_i &+ \Div (Z_i \vu)  = 0,  \quad i=1,2,\dots, K,\\
\partial_t \big((\vr+\sum_{i=1}^K Z_i)\vu\big) &+ \Div\big((\vr+\sum_{i=1}^K Z_i) \vu\otimes \vu) + \nabla P(\vr,Z_0,Z_1,\dots, Z_K) \\
= & \mu \Delta \vu + (\mu+\lambda)\Grad \Div \vu,
\end{aligned}
\end{equation}
together with the boundary condition
\begin{equation} \label{eq8.2}
\vu = \vc{0}
\end{equation}
on $(0,T)\times \partial \Omega$, and the initial conditions in $\Omega$
\begin{equation} \label{eq8.3}
\begin{aligned}
\vr(0,x) &= \vr_0(x), \\
Z_i(0,x) &= Z_{i0}(x), \quad i=1,2,\dots, K,\\
\Big(\vr + \sum_{i=1}^K Z_i\Big)\vu(0,x) &= \vc{m}_0(x).
\end{aligned}
\end{equation}

The weak formulation of this problem  can be easily deduced by modifying Definition \ref{d1}. We replace our Hypotheses (H1--H5) from the bi-fluid flow to the following ones in the case of the multifluid system.
\\ \\
\noindent {\bf Hypothesis (MH1).}
\begin{equation} \label{eq2.1MF}
\begin{aligned}
(\vr_0,Z_{10}, Z_{20}, \dots, Z_{K0})\in & {\cal O}_{\underline {\vec {a}}} :=\\
\{(\vr,Z_1,Z_2,\dots, Z_K)\in R^{K+1}& \,|\,\vr\in [0,\infty),\underline {a}_i \vr \le Z_i\le \overline {a}_i \vr, \, i=1,2,\dots, K\},
\end{aligned}
\end{equation}
where $0\leq \underline{a}_i<\overline{a}_i <\infty$, $i=1,2,\dots, K$. 

Next hypothesis deals with the integrability of the initial conditions.  
\\ \\
{\bf Hypothesis (MH2).}
\begin{equation} \label{eq2.6MF}
\vr_0 \in L^\gamma(\Omega), \; Z_{i0} \in L^{\beta_i}(\Omega) \; \text{ if } \beta_i > \gamma,
\end{equation}
$$
\vc{m}_0 \in L^1(\Omega;R^3),\; (\vr_0+\sum_{i=1}^K Z_{i0})|\vu_0|^2\in L^1(\Omega), \, i=1,2,\dots, K.
$$

As above, we denote
$H_P(\vr,Z_1,\dots, Z_K)$ the solution to the partial differential equation of the first order in $(R^+)^{K+1}$
\begin{multline}\label{MFeqH}
P(\vr,Z_1,Z_2,\dots, Z_K) = \vr \pder{H_P(\vr,Z_1,Z_2,\dots, Z_K)}{\vr} \\ + \sum_{i=1}^K Z_i\pder{H_P(\vr,Z_1,Z_2,\dots, Z_K)}{Z_i}-H_P(\vr,Z_1,Z_2,\dots, Z_K).
\end{multline}

Similarly as in the bi-fluid system, we can construct a suitable solution to (\ref{MFeqH}) explicitly, by means of the following integral formula
\begin{equation} \label{MFsolH}
H_P(\vr,Z_1,Z_2,\dots, Z_K) = \vr \int_{1}^{\vr} \frac{P\big(s, s\frac{Z_1}{\vr}, s\frac{Z_2}{\vr}, \dots, s\frac{Z_K}{\vr}\big)}{s^2} \, {\rm d}s, 
\end{equation}
if $\vr >0$; $H_P(0,0,\dots, 0) =0$.

Next hypotheses deal with the form of the pressure function. We distinguish the case when the pressure (see Hypothesis (MH3) below) is bounded from above by $Z_i^{\beta_i}$ for $\beta_i \geq \gamma+\gamma_{BOG}$ or $\beta_i < \gamma+\gamma_{BOG}$, where $\gamma_{BOG} = \min \{\frac 23 \gamma-1,\frac{\gamma}{2}\}$ is as in Section \ref{se2}. To simplify the notation, we also denote
\begin{equation} \label{8.5}
J= \{j\in \{1,2,\dots,K\}; \beta_j \geq \gamma+\gamma_{BOG}\}.
\end{equation}

We assume
\\ \\
{\bf Hypothesis (MH3).}\\ \\
The function $P$: $[0,\infty)^{K+1} \to [0,\infty)$ is a continuous function in $[0,\infty)^{K+1}$, $P(0,0,\dots,0) =0$, $P\in C^1((0,\infty)^{K+1})$, and
\begin{equation} \label{eq8.7}
\underline{C}(\vr^\gamma + \sum_{i=1}^K Z_i^{\beta_i} +1) \leq P(\vr,Z_1,\dots, Z_K) \leq \overline{C}(\vr^\gamma + \sum_{i=1}^K Z_i^{\beta_i} +1)
\end{equation}
with $\gamma \geq \frac 95$, $\beta_i >0$, $i=1,2,\dots, K$. 
We moreover assume for $i=1,2,\dots, K$
\begin{equation}\label{MFeq2.5-}
|\partial_{Z_i}P(\vr,Z_1,Z_2,\dots, Z_K)|\le C(\vr^{-\underline\Gamma}+\vr^{\overline\Gamma-1})\;\mbox{in ${\cal O}_{\underline {\vec{a}}}\cap (0,\infty)^{K+1}$}
\end{equation}
with some $0\le\underline\Gamma<1$, and with some $0< \overline\Gamma < \gamma + \gamma_{BOG}$ if $\underline{a}_i=0$, 
$0<\overline\Gamma<{\rm max}\{\gamma+\gamma_{BOG}, \beta_i+(\beta_i)_{BOG}\} $ if $\underline{a}_i>0$. 

Next we have
\\ \\
{\bf Hypothesis (MH4).}\\ \\
We assume
%
\begin{equation} \label{MFeq2.4}
P(\vr,\vr s_1,\vr s_2\dots, \vr s_K)=
{\cal P}(\vr,s_1,s_2,\dots, s_K) - {\cal R} (\vr,s_1,s_2,\dots, s_K),
\end{equation}
where $[0,\infty)\ni\vr\mapsto {\cal P}(\vr,s_1,s_2,\dots, s_K)$ is non decreasing  for any $s_i\in [\underline {a}_i,\overline {a}_i]$, $i=1,2,\dots, K$, and $\vr\mapsto {\cal R}(\vr,s_1,s_2,\dots, s_K)$ is for any $s_i \in [\underline {a}_i,\overline {a}_i]$, $i=1,2,\dots, K$ a non-negative $C^2$-function  in $[0,\infty)$ uniformly bounded with respect to $s_i \in [\underline {a}_i, \overline {a}_i]$, $i=1,2,\dots, K$ with compact support uniform with respect to $s_i \in [\underline {a}_i, \overline {a}_i]$, $i=1,2,\dots, K$. Here, $\underline {a}_i, \overline {a}_i$ are the constants from relation (\ref{eq2.1MF}). Moreover, if $\gamma=\frac 95$, we suppose that
\begin{equation}\label{?!MF}
{\cal P}(\vr,s_1,s_2,\dots, s_K)=f(s_1,s_2,\dots, s_K)\vr^\gamma +\pi(\vr,s_1,s_2,\dots, s_K),
\end{equation}
where $[0,\infty)\ni\vr\mapsto \pi(\vr,s_1,s_2,\dots, s_K)$ is non decreasing  for any $s_i \in [\underline {a}_i, \overline {a}_i]$, $i=1,2,\dots, K$, $f\in L^\infty\big(\prod_{i=1}^K(\underline {a}_i,\overline {a}_i)\big)$, and
$$
{\rm ess\, inf}_{\prod_{i=1}^K(\underline {a}_i,\overline {a}_i)} f(s_1,s_2,\dots, s_K)\ge \underline {f}_M>0.
$$
Further, for all $\vr\in (0,1)$
$$
\sup_{s_i\in [\underline{a}_i, \overline{a}_i], i=1,2,\dots,K}P(\vr,\vr s_1,\dots,\vr s_K) \leq c \vr^\alpha
$$
for some $c>0$ and $\alpha >0$. 

Finally, we assume\\ \\
\vbox{\noindent 
{\bf Hypothesis (MH5).}\\ \\
Function $\vr\mapsto P(\vr,Z_1,Z_2,\dots,Z_K)$ is for all $Z_i>0$, $i=1,2,\dots, K$ locally Lipschitz on $(0,\infty)$  and function $Z_1,Z_2,\dots, Z_K\mapsto \partial_{Z_j} P(\vr,Z_1,Z_2,\dots, Z_K)$ is for all $\vr>0$ locally Lipschitz on $(0,\infty)^K$ for any $j=1,2,\dots, K$ with Lipschitz constant
\begin{equation}\label{eq2.3a-MF}
\widetilde L_{PM}(\vr,Z_1,Z_2,\dots, Z_K)\le C(\underline r)(1+\vr^A) 
\end{equation}
for all $\underline r>0$, $(\vr, Z_1,Z_2,\dots, Z_K)\in {\cal O}_{\underline a} \cap (\underline r,\infty)^{K+1}$ 
with some non negative number $A$. Number $C(\underline r)$ may diverge to $+\infty$ as $\underline r\to 0^+$.}
\\ \\

Following the proof for the bi-fluid system we can obtain the following result. Note, in particular, that we may apply Proposition \ref{p4} for each density $Z_i$, $i=0,1,\dots, K$ separately.

\begin{thm} \label{t2}
Let $\gamma \geq \frac 95$. Then under Hypotheses (MH1--MH5), there exists at least one weak solution to problem \eqref{eq8.1}--\eqref{eq8.3}. Moreover, the densities $\vr \in C_{weak}([0,T); L^\gamma (\Omega))$, $Z_i \in C_{weak}([0,T); L^{\max\{\gamma,\beta_i\}} (\Omega))$, $i=1,2,\dots, K$, $(\vr+\sum_{i=1}^K Z_i)\vu \in C_{weak}([0,T); L^q (\Omega;R^3))$ for some $q>1$, and $P(\vr,Z_1,Z_2,\dots, Z_K) \in L^q(\Omega)$ for some $q>1$.   
\end{thm}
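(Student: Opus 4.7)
The plan is to follow verbatim the three-level approximation and compactness scheme used for Theorem \ref{t1}, treating each of the $K$ companion densities $Z_i$ on the same footing as the single companion density $Z$ in the bi-fluid argument. All the analysis decouples across the $Z_i$'s because they share the common velocity field $\vu$ and each pair $(Z_i,\vu)$ independently satisfies a continuity equation; only the pressure couples them, and Hypotheses (MH3)--(MH5) are tailored precisely so that the same monotonicity/Lipschitz arguments go through.

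First, I would introduce the same regularized pressure
\[
\Pi_\delta(\vr,Z_1,\dots,Z_K)=(1-\eta_\delta)P+\delta\Big(\vr^B+\sum_{i=1}^K Z_i^B+\tfrac12\sum_{i=1}^K(\vr^2 Z_i^{B-2}+Z_i^2\vr^{B-2})\Big),
\]
with $B$ large, regularize each continuity equation by $\varepsilon\Delta$ with Neumann data, add the compensating term $\varepsilon\nabla(\vr+\sum Z_i)\cdot\nabla\vu$ in the momentum equation, and project onto a Galerkin basis of dimension $N$. The Helmholtz function $\mathcal H_\delta$ associated with $\Pi_\delta$ via \eqref{MFsolH} plus the $\delta$-part gives the energy identity exactly as in Section \ref{se5}: multiplying the $i$-th regularized continuity equation by $\partial_{Z_i}\mathcal H_\delta$ (and the $\vr$-equation by $\partial_\vr\mathcal H_\delta$) and summing reproduces the analogue of \eqref{eq5.6}, with $\ep$-integrals involving the Hessian of $\mathcal H_\delta$ that are absorbed using (MH3), (MH5) and the artificial-pressure Hessian (convex in $[0,\infty)^{K+1}$). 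This yields the same $N$- and $\ep$-uniform bounds on $\vr,Z_1,\dots,Z_K,\vu$, and the Schauder/maximal parabolic regularity scheme of \cite{NoSt} produces a solution at the Galerkin level; the passage $N\to\infty$ is then standard.

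For the $\ep$-limit I would apply Proposition \ref{p4} separately to each couple $(\vr,Z_i,\vu_\varepsilon)$, defining $s_i=Z_i/\vr$ in the sense of \eqref{conv} and obtaining \eqref{eq3.8} for every $i$, together with the pointwise bounds $\underline a_i\le s_i\le\overline a_i$. The Bogovskii test function $\mathcal B(\vr-\overline\vr)$ (and also $\mathcal B(Z_i-\overline{Z_i})$ when $\underline a_i>0$ and $\beta_i>\gamma$) produces the uniform pressure bound \eqref{eq6.2+} exactly as before. The effective viscous flux identity is obtained through the same commutator argument with test function $\psi\,\nabla\Delta^{-1}(\phi\vr_\varepsilon)$ (Proposition \ref{p5}); here nothing changes because the pressure enters only as a scalar, and the key fact is that $\Pi_\delta(\vr_\ep,Z_{1,\ep},\dots,Z_{K,\ep})-\Pi_\delta(\vr_\ep,\vr_\ep s_1,\dots,\vr_\ep s_K)\to 0$ in $L^q$ by (MH3) applied coordinate-wise. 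The strong convergence of $\vr_\ep$ is then derived using the decomposition (MH4) and the renormalized continuity equation, with $\mathcal R(\vr,s_1,\dots,s_K)$ compactly supported uniformly in $(s_1,\dots,s_K)$; the Gronwall-type closure of \eqref{eqeq} is unchanged.

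For the $\delta$-limit I would repeat Section \ref{se7}: the Bogovskii estimate gives $\vr^{\gamma+\gamma_{BOG}}$ and $Z_i^{\beta_i+\gamma_{BOG}}$ (or $\beta_i+(\beta_i)_{BOG}$ when $i\in J$), then $s_i^\delta\to s_i$ strongly in $\vr_\delta$-weighted sense by Proposition \ref{p4} applied componentwise; the identification $\overline{\Pi_\delta}=\overline{P(\vr,\vr s_1,\dots,\vr s_K)}$ follows from (MH3). The truncation-based effective viscous flux identity (Proposition \ref{p5+}) and the oscillation defect measure (Proposition \ref{odm}) go through verbatim using the decomposition \eqref{?!MF} (with $\underline f_M$ playing the role of $\underline f$) and the uniform compact support of $\mathcal R$. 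Once $\vr_\delta\to\vr$ a.e., the companion strong convergence $Z_{i,\delta}\to Z_i$ a.e. follows from \eqref{cvs} applied to each $s_i$. The only genuine obstacle, as in the bi-fluid case, is the borderline $\gamma=9/5$ step where one must both extract the monotone leading term $\underline f_M\vr^\gamma$ from $\mathcal P$ and control the remainder $\mathcal R$ depending on all the $s_i$; here (MH4) is designed to make this work, and the multidimensional dependence on $(s_1,\dots,s_K)\in\prod[\underline a_i,\overline a_i]$ enters only through the uniform compact support hypothesis on $\mathcal R$, which is why the argument closes exactly as for $K=1$.
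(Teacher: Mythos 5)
Your proposal is correct and follows essentially the same route as the paper: the paper's own proof of Theorem \ref{t2} consists precisely of repeating the three-level approximation and compactness scheme of Theorem \ref{t1} with each companion density $Z_i$ treated separately, applying Proposition \ref{p4} componentwise to the couples $(\vr,Z_i,\vu)$ and relying on Hypotheses (MH3--MH5) to reproduce the pressure decomposition, the effective viscous flux identity and the borderline case $\gamma=9/5$ verbatim. Your elaboration of the regularized pressure, the energy identity and the two limit passages matches the intended argument.
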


{\bf Acknowledgement:} The work of M. Pokorn\'y was supported by the Czech Science Foundation, grant No. 16-03230S. Significant part of the paper was written during the stay of M. Pokorn\'y at the University of Toulon. The authors aknowledge this support.

\end{document}